\newcommand{\rt}{\rightarrow}
\newcommand{\lrt}{\longrightarrow}
\newcommand{\st}{\stackrel}
\newcommand{\la}{\lambda}
\newcommand{\La}{\Lambda}
\newcommand{\lan}{\langle}
\newcommand{\ran}{\rangle}
\newcommand{\CA}{\mathcal{A} }
\newcommand{\CC}{\mathcal{C} }
\newcommand{\CG}{\mathcal{G} }
\newcommand{\CM}{\mathcal{M} }
\newcommand{\CP}{\mathcal{P} }
\newcommand{\CQ}{\mathcal{Q} }
\newcommand{\CR}{\mathcal{R} }
\newcommand{\CS}{\mathcal{S} }
\newcommand{\CT}{\mathcal{T} }
\newcommand{\CX}{\mathcal{X} }
\newcommand{\CV}{\mathcal{V}}
\newcommand{\CU}{\mathcal{U}}
\newcommand{\I}{{\lan I \ran}}
\newcommand{\Mod}{{\rm{Mod\mbox{-}}}}
\newcommand{\mmod}{{\rm{{mod\mbox{-}}}}}
\newcommand{\prj}{{\rm{prj}\mbox{-}}}
\newcommand{\add}{{\rm{add}\mbox{-}}}
\newcommand{\Ker}{{\rm{Ker}}}
\newcommand{\rad}{{\rm{rad}}}
\newcommand{\Hom}{{\rm{Hom}}}
\newcommand{\Ext}{{\rm{Ext}}}
\newcommand{\End}{{\rm{End}}}
\theoremstyle{plain}
\newtheorem{theorem}{Theorem}[section]
\newtheorem{corollary}[theorem]{Corollary}
\newtheorem{lemma}[theorem]{Lemma}
\newtheorem{proposition}[theorem]{Proposition}
\theoremstyle{definition}
\newtheorem{definition}[theorem]{Definition}
\newtheorem{example}[theorem]{Example}
\newtheorem{construction}[theorem]{Construction}
\newtheorem{remark}[theorem]{Remark}
\theoremstyle{plain}
\theoremstyle{definition}
\numberwithin{equation}{section}
\begin{document}

\title[When stable Cohen-Macaulay  Auslander algebra is semisimple]{When stable Cohen-Macaulay  Auslander algebra is semisimple}
\dedicatory{Dedicated to people in Afghanistan}
\author[Rasool Hafezi  ]{Rasool Hafezi }

\address{Department of Mathematics, University of Isfahan, P.O.Box: 81746-73441, Isfahan, Iran }
\email{hafezi@ipm.ir, hafezira@gmail.com}

\subjclass[2010]{ 18A25, 16G10, 16G70}
\thanks{}
\keywords{morphism catgeory, monomorphism catgeory, almost split sequence, Gorenstein projective module}


\begin{abstract} Let $\text{Gprj}\mbox{-}\La$ denote the category of Gorenstein projective modules over an Artin algebra $\La$ and the category $\mmod (\underline{\text{Gprj}}\mbox{-}\La)$  of finitely presented functors over the stable category $\underline{\text{Gprj}}\mbox{-}\La$. In this paper, we study those algebras $\La$ with $\mmod (\underline{\text{Gprj}}\mbox{-}\La)$ to be a semisimple abelian category, and called $\Omega_{\CG}$-algebras. The class of $\Omega_{\CG}$-algebras contains important classes of algebras, including gentle algebras. Over an $\Omega_{\CG}$-algebra $\La$,  the structure of the almost split sequences in the morphism category $\text{H}(\text{Gprj}\mbox{-}\La)$ and the monomorphism category $\CS(\text{Gprj}\mbox{-}\La)$ of $\text{Gprj}\mbox{-}\La$  is investigated. Among other applications, we provide some results for the Cohen-Macaulay Auslander algebras of $\Omega_{\CG}$-algebras.

\end{abstract}

\maketitle
\section{Introduction and preliminaries}\label{Section 1}
Let us begin with some notation.

Let $\mathcal{X}$ be an additive category. The Hom sets will be denoted either by $\Hom_{\CX}(-, -)$, $\CX(-, -)$ or even just $(-,-)$, if there is no risk of ambiguity. By definition, a (right) $\CX$-module is a contravariant additive functor $F:\CX \rt \CA \rm{b}$, where
$\CA \text{b}$ denotes the category of abelian groups. The $\CX$-modules and natural transformations between them, called morphisms, form an abelian category denoted by $\Mod \CX$. An $\CX$-module $F$ is called finitely presented if there exists an exact
sequence 
$$\CX(-, X) \rt \CX(-, X')\rt F\rt 0,$$
with $X$ and $X'$ in $\CX$. All finitely presented $\CX$-modules form a full subcategory of $\Mod \CX$, denoted by $\mmod \CX$.

Throughout the paper  $\La$  denotes  an Artin algebra and $\mmod \La$ the category of finitely generated right $\La$-modules. From now on to the end of the introduction, let $\CX$ be a full subcategory of $\mmod \La$. Assume   $M \in \mmod \La$.  A right $\CX$-approximation of $M$ is a morphism $f: X \rt M$ such that $X\in \CX$ and any other morphism $X' \rt M$ with $X'\in \CX$ factors through $f$. $\CX$ is called a contravariantly finite subcategory of $\mmod \La$ if every $M \in \mmod \La$ admits a right $\CX$-approximation. Left $\CX$-approximations and covariantly finite subcategories are defined dually. If $\CX$ is both a contravariantly finite and a covariantly finite subcategory of $\mmod \La$, then it is called a functorially finite subcategory.

If $\CX$ is contravariantly finite, then any morphism in $\CX$ admits a weak kernel. Then it is know that $\mmod \CX$ is abelian, see \cite[Chapter III, Section 2]{Au}.

Let $\CX \subseteq \mmod \La$ contain the subcategory $\text{prj}\mbox{-}\La$ of projective modules in $\mmod \La$.  We consider the stable category of $\mathcal{X}$, denoted by $\underline{\mathcal{X}}.$ The objects of $\underline{\mathcal{X}}$ are the same as the objects of $\mathcal{X}$, which we usually denote by  $\underline{X}$ when an object $X \in \mathcal{X}$ considered as an object in the stable category, and  the morphisms are given by $\underline{\text{Hom}}_{\La}(\underline{X}, \underline{Y})= \Hom_{\La}(X, Y)/ \CP(X, Y)$, where $\CP(X, Y)$ is the subgroup of $\Hom_{\La}(X, Y)$  consisting of those morphisms from $X $ to $Y$ which factor through a projective module  in $\mmod \La.$ We also denote by $\underline{f}$ the residue class of $f: X \rt Y$ in $\underline{\text{Hom}}_{\La}(\underline{X}, \underline{Y})$. It is well-known that the canonical functor $\pi: \mathcal{X}\rt \underline{\mathcal{X}}$ induces a fully faithful functor  functor $\pi^*:\mmod \underline{\mathcal{X}}\rt \mmod \mathcal{X}$. Hence due to this embedding we can  identify the functors in $\mmod \underline{\mathcal{X}}$ as functors in $\mmod \mathcal{X}$ vanish on the projective modules.

By our embedding, one can see that $\mmod \underline{\mathcal{X}}$ is  a Serre subcategory, i.e., it is closed under taking subobjects, quotients and extensions.  The Gabriel quotient category $\frac{\mmod \CX}{\mmod \underline{\CX}}$ is by definition the localization of  $\mmod \CX$ with respect to
the collection of all morphisms whose kernels and cokernels are in $\mmod \underline{\CX}$, see \cite{Ga} for more details. 

It is proved in \cite[Theorem 3.5]{AHK} that if $\text{prj}\mbox{-}\La \subseteq \CX $ is a contravariantly finite subcategory, then there exists an equivalence 

$$\frac{\mmod \CX}{\mmod \underline{\CX}}\simeq \mmod \La.$$

The above equivalence  is a relative version of an equivalence given by Auslander 50 years ago, and now is called the Auslander's formula. A  considerable part of Auslander's work on the representation
theory of finite dimensional, or more general Artin, algebras is motovated by  this
formula. Auslander's  formula suggests that for studying  $\mmod \La$ one may
study $\mmod (\mmod \La)$ that has nicer homological properties than $\mmod \La$, and then translate the results back to $\mmod \La$.

Inspired by the influence of the Auslander's formula on the modern representation theory, and having in hand
 a relative version of the formula, as presented in the above, we are interested in studying $\mmod \CX$ and $\mmod \underline{\CX}$ for some certain subcategories. One of the important subcategories in the representation theory is the subcategory of Gorenstein projective modules, which plays a key role in Gorenstein homology algebra. In this paper we focus on this kind of subcategories.

Let us in below recall the notion of Gorenstein projective modules and relevant material.

 A complex
\[P^\bullet:\cdots\rightarrow P^{-1}\xrightarrow{d^{-1}} P^0\xrightarrow{d^0}P^1\rightarrow \cdots\]
in $\text{prj}\mbox{-}\La$ is called a totally acyclic complex if it is acyclic and the induced Hom complex $\Hom_{\La}(P^\bullet, \La)$ is also acyclic. A module $G$ in $\mmod \La$ is called {\it Gorenstein projective} if it is isomorphic to a syzygy of a totally acyclic complex $P^\bullet$ \cite{AB, EJ}. We denote by $\text{Gprj}\mbox{-}\La$ the full subcategory of $\mmod \La$ consisting of all Gorenstein projective modules. By definition is is plain $\text{Gprj}\mbox{-}\La$ contains $\text{prj}\mbox{-}\La$.

 An Artin algebra $\La$ is called of finite Cohen-Macaulay type, or  CM-finite for short, if there are only finitely many isomorphism classes of indecomposable finitely generated Gorenstein projective modules \cite{B}.

Clearly, $\La$ is a CM-finite algebra  if and only if there is a finitely generated module $G$ such that $\text{Gprj}\mbox{-}\La=\add(G)$, where $\add(G)$ is the additive subcategory of $\mmod \La$ consisting of all modules isomorphic to a direct summand of a finite direct sum of copies of $G$. In this case $G$ is called an additive generator of $\text{Gprj}\mbox{-}\La$ and, in addition, if $G$ is basic, then  the Artin algebra $\End_{\La}(G)$  is called the Cohen-Macaulay Auslander algebra of $\La.$ Moreover, the stable Cohen-Macaulay Auslander algebra of $\La$ is defined as the endomorphism algebra $\underline{\text{End}}_{\La}(\underline{G})$ of $\underline{G}$ in the stable category $\underline{\text{Gprj}}\mbox{-}\La.$ If $\La$ is  a  CM-finite algebra
with G as an additive  generator, then the evaluation functor
$$\zeta_G: \mmod (\text{Gprj}\mbox{-}\La)\rt \mmod \text{End}_{\La}(G)$$
defined by $\zeta_G(F)=F(G)$ is an equivalence of categories \cite[Proposition 2.7(c)]{Au2}. Further, the restriction of $\zeta_G$ to  $\mmod (\underline{\text{Gprj}}\mbox{-}\La)$ induces the equivalence $\mmod (\underline{\text{Gprj}}\mbox{-}\La)\simeq \mmod\underline{\text{End}}_{\La}(\underline{G}).  $

  We say that $\La$ is an $n$-Iwanaga-Gorenstein, or simply $n$-Gorenstein, algebra if the injective dimension of $\La$ both as a left and a right $\La$-module is at most $n$. We often omit ``$n$" when the value is not important to know. 
  It is known that if $\La$ is Gorenstein, then  $\text{Gprj}\mbox{-}\La $ is contravariantly finite in $\mmod \La$, see e.g. \cite[Proposition 4.7]{B}. In particular, over Gorensetin algeras $\mmod (\text{Gprj}\mbox{-}\La)$ is abelian.

The simplest cases of studying $\mmod (\underline{\text{Gprj}}\mbox{-}\La)$, at least in the homological dimensions sense, is when the global projective dimension of $\mmod (\underline{\text{Gprj}}\mbox{-}\La)$ is zero, or a semisimple abelian category, i.e., any object is projective. We call an algebra with this property $\Omega_{\CG}$-algebra; Some basic properties of them will be studied in Section \ref{Omega-algebras}. Especially we show in Proposition \ref{CM-finite} any $\Omega_{\CG}$-algebra is CM-finite. Hence, an $\Omega_{\CG}$-algebra $\La$ is nothing else  to say that the associated stable Cohen-Macaulay algebra of $\La$ is a semisimple algebra. Such an observation was investigated in the remarkable paper of Auslander and Reiten, where they showed that  for a dualizing $R$-variety $\mathcal{C}$: the global dimension of  $ \mmod (\underline{\text{mod}}\mbox{-}\CC)$ is zero if and only if $\mmod \CC$ is Nakayama and whose lowey length is at most two \cite[Theorem 10.7]{AR1}. The common case with our study is when $\mathcal{C}=\text{prj}\mbox{-}\La$ and $\La$ is a self-injective algebra.

The class of $\Omega_{\CG}$-algebras contains important class of algebras in the representation theory such as the class of gentle algebras, and more general the class of  quadratic monomial algebras (Remark \ref{Remark 2.8}). Gentel algebras  can be found in many places of mathematics. The study of gentle algebras was initiated
by Assem and Skowro\'{n}ski \cite{ASk}  in order to study iterated tilted algebras of type $\mathbb{A}$. Viewing gentle algebras as $\Omega_{\CG}$-algebras provide them a functorial approach. Even though the provided functorial approach is not completely determined gentle algebras, however it might be helpful to consider them in the larger class of $\Omega_{\CG}$-algebras. 
 
 We   are  also interested in studying parallel of $\mmod (\text{Gprj}\mbox{-}\La)$. The $\Omega_{\CG}$-algebras as the simplest case which  we choose to study $\mmod (\underline{\text{Gprj}}\mbox{-}\La)$ also affect nicely to the representation theory of the corresponding    Cohen-Macaulay Auslander  algebras as our results will indicate.
To study $\mmod (\text{Gprj}\mbox{-}\La)$ and  $\mmod (\underline{\text{Gprj}}\mbox{-}\La)$, respectively,  we apply the morphism category $\text{H}(\text{Gprj}\mbox{-}\La)$  and the monomorphism category $\CS(\text{Gprj}\mbox{-}\La)$ of  $\text{Gprj}\mbox{-}\La$.

 Let us first recall the aforementioned categories as follows: the morphism category of $\text{H}(\mmod \La)$, for short $\text{H}(\La)$, has as objects the $\La$-homomorphisms in $\mmod \La$, and morphisms are given by  commutative diagrams. In fact, it is equivalent to the category of finitely
 generated modules over the lower  triangular matrix ring   $T_2(\La)= \tiny {\left[\begin{array}{ll} \La & 0 \\ \La & \La \end{array} \right]}$. 
We denote an object in $\text{H}(\La)$ by $\left(\begin{smallmatrix} A \\ B\end{smallmatrix}\right)_f$, where $f:A\rt B$ a map in $\mmod \La$, and a morphism in $\text{H}(\La)$ between $\left(\begin{smallmatrix} A \\ B\end{smallmatrix}\right)_f$ and $\left(\begin{smallmatrix} C \\ D\end{smallmatrix}\right)_g$ is denoted by $\left(\begin{smallmatrix} \alpha \\ \beta\end{smallmatrix}\right) $, where $\alpha:A\rt C$ and $\beta:B\rt D$ in $\mmod \La$ such that $\beta f=g \alpha.$ Let $\CS(\mmod \La)$ (or for short $\CS(\La)$) denote the subcategory of all  monomorphisms in $\mmod \La.$ For simplicity, especially in the diagrams or figures we write $AB_f$ in stead of $\left(\begin{smallmatrix} A \\ B\end{smallmatrix}\right)_f$, moreover if the morphism $f$ is clear from the context we only write $AB$, especially for the case $f$ is either the identity morphism or zero map. The  category  $\CS(\La)$ has  been recently  studied extensively by Ringel and Schmidmeie \cite{RS}. Now the morphism (resp. monomorphism)  category of $\text{H}(\text{Gprj}\mbox{-}\La)$ (resp. $\CS(\text{Gprj}\mbox{-}\La)$)  is the subcategory of $\text{H}(\La)$ (resp. $\CS(\La)$) consisting of all objects $\left(\begin{smallmatrix} A \\ B\end{smallmatrix}\right)_f$ such that $A, B \in \text{Gprj}\mbox{-}\La$ (resp. $A, B$ and $\text{Cok} (f) \in \text{Gprj}\mbox{-}\La$). Under the equivalence $\text{H}(\La)\simeq \mmod T_2(\La)$, the subcategory  $\CS(\text{Gprj}\mbox{-}\La)$ is mapped into the subcategory $\text{Gprj}\mbox{-}T_2(\La)$ of Gorenstien projective modules over $T_2(\La)$.

The reason of applying the categories $\text{H}(\text{Gprj}\mbox{-}\La)$ and $\CS(\text{Gprj}\mbox{-}\La)$ is based on the  relationship given by the following  functors:
$$\Phi: \text{H}(\text{Gprj}\mbox{-}\La)\rt \mmod (\text{Gprj}\mbox{-}\La),  \ \ \left(\begin{smallmatrix} A \\ B\end{smallmatrix}\right)_f \mapsto \text{Cok}((-, A)\st{(-, f)}\lrt (-, B)),$$
where we apply the Yoneda  functor on the morphism $f:A\rt B$ and then take cokernel of the obtained morphism $(-, f):(-, A)\rt (-, B)$ in $\mmod  (\text{Gprj}\mbox{-}\La),$ and 

$$\Psi:\CS(\text{Gprj}\mbox{-}\La) \rt \mmod (\underline{\text{Gprj}}\mbox{-}\La) ,  \ \ \left(\begin{smallmatrix} A \\ B\end{smallmatrix}\right)_f \mapsto \text{Cok}((-, B)\st{(-, p)}\lrt (-, \text{Cok}(f))), $$
where $p:B\rt \text{Cok}(f)$ is the canonical quotient map.

The functor $\Psi$ was studied in \cite{E} and \cite{RZ} for the case that $\mmod \La$ over  certain algebras $\La$ are involved in place of  the subcategory  $\text{Gprj}\mbox{-}\La$, indeed, we have the functor $\Psi$ from $\CS(\La)$ to $\mmod (\underline{\text{mod}}\mbox{-}\La)$. We also add here the origin of the functor $\Psi$ goes back to \cite{AR2}.  Respect to fairly large class of subcategories, including $\text{Gprj}\mbox{-}\La$, the functor $\Psi$ is treated by the author in \cite{H}. It is proved that  the functor $\Psi$ is dense, full and objective; hence, by \cite[Appendix]{RZ}, the functor induces the equivalence 
$$\CS(\text{Gprj}\mbox{-}\La)/\CU\simeq \mmod (\underline{\text{Gprj}}\mbox{-}\La), $$ 
where $\CU$ is the ideal of $\CS(\text{Gprj}\mbox{-}\La)$ generated by objects of the forms $\left(\begin{smallmatrix} G \\ G\end{smallmatrix}\right)_1$ or $\left(\begin{smallmatrix} 0 \\ G\end{smallmatrix}\right)_0$, where $G$ runs through $\text{Gprj}\mbox{-}\La.$ The quotient category $\CS(\text{Gprj}\mbox{-}\La)/\CU$ has the same objects as $\CS(\text{Gprj}\mbox{-}\La)$. For objects $X$ and $Y$, let $I(X, Y)$ denotes the morphisms of $\Hom_{\text{H}(\La)}(X, Y)$ that factor through a finite  direct sum of objects of the forms $\left(\begin{smallmatrix} G \\ G\end{smallmatrix}\right)_1$ or $\left(\begin{smallmatrix} 0 \\ G\end{smallmatrix}\right)_0$ where $G \in \text{Gprj}\mbox{-}\La.$ The morphisms spaces of $\CS(\text{Gprj}\mbox{-}\La)/\CU$
are defined as the quotients $\CS(\text{Gprj}\mbox{-}\La)/\CU(X, Y):=\Hom_{\text{H}(\La)}(X, Y)/I(X, Y)$.

Similar observation also holds for the functor $\Phi$. This functor was already studied in \cite{AR2}, of course for the case that we are dealing with $\mmod \La$ in stead of $\text{Gprj}\mbox{-}\La.$ In the same manner of $\Psi$ one can prove that the functor $\Phi$ is full, dense and objective, see also \cite[Proposition 2.5]{E}. Hence, again by \cite[Appendix]{RZ}, we have the equivalence 

 $$\text{H}(\text{Gprj}\mbox{-}\La)/\CV\simeq \mmod (\text{Gprj}\mbox{-}\La) $$ 
 where $\CV$ is the ideal of $\CS(\text{Gprj}\mbox{-}\La)$ generated by objects of the forms $\left(\begin{smallmatrix} G \\ G\end{smallmatrix}\right)_1$ or $\left(\begin{smallmatrix} G \\ 0\end{smallmatrix}\right)_0$, where $G$ runs through $\text{Gprj}\mbox{-}\La.$ 

Therefore, the above equivalences make  clear  why we choose  $\text{H}(\text{Gprj}\mbox{-}\La)$ and $\CS(\text{Gprj}\mbox{-}\La)$ to study $\mmod (\text{Gprj}\mbox{-}\La)$ and  $\mmod (\underline{\text{Gprj}}\mbox{-}\La)$, respectively. In spite of inducing the equivalence via the functor $\Psi$ and $\Phi$, they also behave well with respect to the Auslander-Reiten theory. More precisely, it is proved in \cite[Section 5]{H} that apart from certain almost split sequences in $\text{H}(\text{Gprj}\mbox{-}\La)$ the functor $\Psi$ carries all other   almost split sequences in $\text{H}(\text{Gprj}\mbox{-}\La)$  to the ones in $\mmod (\underline{\text{Gprj}}\mbox{-}\La)$. An analogous result is valid for the functor $\Phi$. For this case, we refer to \cite[Section 5]{HMa}. Although, in the latter paper the functor $\Phi$ is defined for the morphism category $\text{H}(\CA)$ of a sufficiently nice abelian category $\CA$. But the proof still
works in our setting. However, for our use in this paper we sketch a proof for the cases in place we are dealing with. 

The paper is organized and illustrated in more details as follows. In Section \ref{Omega-algebras}, the notion of $\Omega_{\CG}$-algebras is defined. Some basic properties and examples of $\Omega_{\CG}$-algebras will be discussed. A classification of indecomposable non-projective Gorenstein projective modules over $\Omega_{\CG}$-algebras will be given that is a generalization of the ones  in \cite{CSZ} for quadratic monomial algebras and \cite{Ka} for gentle algebras to $\Omega_{\CG}$-algebras (see Remark \ref{remark 2.7} for details). The classification is given by defining an equivalence relation on the indecomposable non-projective modules in $\text{Gprj}\mbox{-}\La$.   We shall also  describe the singularity category over Gorenstein $\Omega_{\CG}$-algebras (Proposition \ref{proposition 2.8}).  In Section \ref{Section 3}, for an $\Omega_{\CG}$-algebra $\La$, the stable Auslander-Reiten quiver of $\CS(\text{Gprj}\mbox{-}\La)$ is completely determined in the main theorem of this section (Theorem \ref{Theorem 4.10}). In particular, we will establish a bijection between of the components of the stable Auslander-Reiten quiver and the equivalence classes of the relation already defined. In addition, if $\La$ is assumed further to be a finite dimensional algebra over an algebraic closed filed $k$ of  	characteristic different from two,  then we will show that the stable category of Gorenstein projective $T_2(\La)$-modules is still described well. In Section \ref{Section 4}, we will compute the almost split sequences in $\text{H}(\text{Gprj}\mbox{-}\La)$ over $\Omega_{\CG}$-algebras with certain ending terms. As an interesting application in Corollary \ref{Cor 4.4}  a connection between certain simple modules over the  Cohen-Macaulay Auslander algebra of an $\Omega_{\CG}$-algebra is given. In the last section, we will study those $\Omega_{\CG}$-algebras that are 1-Gorenstein. In Proposition \ref{RadGorp}, we will an equivalent condition of such algebras via their radicals. We will show over a 1-Gorenstein $\Omega_{\CG}$-algebras $\La$: the Cohen-Macaulay Auslander algebra  of  $\La$ is representation-finite if and only if  so is $\La$.  Finally,  in Proposition \ref{linearquiver} we observe that $\Omega_{\CG}$-algebras (not necessary 1-Gorenstein)  are a good source to produce CM-finite algebras.

\section{$\Omega_{\CG}$-algebras} \label{Omega-algebras}
In this section, we will introduce the notion of $\Omega_{\CG}$-algebras. Some basics properties and examples of $\Omega_{\CG}$-algebras will be provided. In particular, we characterize the isoclasses of indecomposable non-projective Gorenstein-projective modules over an $\Omega_{\CG}$-algebra in terms of an equivalence relation defined on $\text{Gprj}\mbox{-}\La$. Next we will apply this characterization to describe the singularity categories over Gorenstein $\Omega_{\CG}$-algebras.

Throughout this section we assume that   $\rm{Gprj} \mbox{-} \La$ is contravariantly finite in $\mmod \La.$ Recall that a subcategory $\CX$ of  $\mmod \La$ is resolving if it contains all projectives, and  closed under extensions and the kernels of epimorphisms. On the other hand, since  $\rm{Gprj} \mbox{-} \La$ is resolving it also becomes a functorially finite subcategory of $\mmod \La$ (due to \cite{KS}). So, by \cite{AS}, we can talk about the Auslander-Reiten theory in $\rm{Gprj} \mbox{-} \La.$

By definition of the Gorenstein projective modules, we can see for each $G \in \rm{Gprj} \mbox{-} \La$, $G^*=\Hom_{\La}(G, \La)$ is a  Gorenstein projective module in $\mmod \La^{\rm{op}}.$ Then the duality $(-)^*:\prj \La \rt \prj \La^{\rm{op}}$ can be naturally generalized to the  duality $(-)^*:\rm{Gprj} \mbox{-} \La \rt \rm{Gprj} \mbox{-} \La^{\rm{op}}.$

Motivated by Section 5 of \cite{H1} (an unpublished paper by the author) we have the following definition.

\begin{definition}
	Let $\La$ be an Artin algebra. Then $\La$ is called $\Omega_{\CG}$-algebra if $\rm{mod}\mbox{-}(\underline{\rm{Gprj}}\mbox{-} \La)$ is a  semisimple abelian category, i.e. all whose  objects are projective.
	
\end{definition}
In the sequel,  we give some basic properties of $\Omega_{\CG}$-algebras.

 We start with following lemmas.
 
 \begin{lemma}\label{Radical}
 	Let $\La$ be an Artin algebra.  Let $P$ be an indecomposable projective in $\mmod \La.$
 	\begin{itemize}
 		\item[$(i)$] If $f: G  \rt \rm{rad}(P)$ is a  minimal right $\rm{Gprj} \mbox{-} \La$-approximation. Then $i \circ f$ is a minimal  right  almost split morphism in $\rm{Gprj} \mbox{-} \La$. Here, $i:\rm{rad}(P)\hookrightarrow P$ denotes  the canonical inclusion.
 		\item[$(ii)$] If $g: G'  \rt \rm{rad}(P^*)$ is a  minimal right $\rm{Gprj} \mbox{-} \La^{\rm{op}}$-approximation. Then $(j \circ g)^*:  P \rt (G')^*$ is a minimal left almost split morphism in $\rm{Gprj} \mbox{-} \La$. Here, $j:\rm{rad}(P^*)\hookrightarrow P^*$ denotes
 		the canonical inclusion, and by abuse of notation we identify $P^{**}$ and $P$ because of the natural isomorphism $P^{**}\simeq P$.
 	\end{itemize}
 \end{lemma}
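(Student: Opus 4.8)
The plan is to prove $(i)$ directly from the definition of a minimal right almost split morphism, and then to deduce $(ii)$ by applying the duality $(-)^*\colon\rm{Gprj}\mbox{-}\La\rt\rm{Gprj}\mbox{-}\La^{\op}$ to the statement of $(i)$ over $\La^{\op}$. The crucial input for $(i)$ is that $P$, being indecomposable projective over an Artin algebra, is a \emph{local} module: $\End_\La(P)$ is local and $\rm{rad}(P)$ is the unique maximal submodule of $P$. Consequently, for $X\in\rm{Gprj}\mbox{-}\La$ a morphism $h\colon X\rt P$ is a split epimorphism if and only if it is surjective (as $P$ is projective), and it fails to be surjective precisely when $\im(h)\subseteq\rm{rad}(P)$, i.e. precisely when $h$ factors through the inclusion $i\colon\rm{rad}(P)\hookrightarrow P$. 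I also note that $P\in\rm{Gprj}\mbox{-}\La$, so it makes sense to ask for a right almost split morphism ending at $P$ inside $\rm{Gprj}\mbox{-}\La$.

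With this in hand I would verify the three defining properties of $i\circ f$. First, $i\circ f$ is not a split epimorphism, since its image lies in $\rm{rad}(P)\subsetneq P$ and so it is not surjective. Second, given any $h\colon X\rt P$ with $X\in\rm{Gprj}\mbox{-}\La$ that is not a split epimorphism, the observation above yields a factorization $h=i\circ h'$ with $h'\colon X\rt\rm{rad}(P)$; since $f$ is a right $\rm{Gprj}\mbox{-}\La$-approximation and $X\in\rm{Gprj}\mbox{-}\La$, we may write $h'=f\circ h''$ for some $h''\colon X\rt G$, whence $h=(i\circ f)\circ h''$. Hence $i\circ f$ is right almost split in $\rm{Gprj}\mbox{-}\La$. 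Third, for right minimality, suppose $\varphi\in\End_\La(G)$ satisfies $(i\circ f)\circ\varphi=i\circ f$; since $i$ is a monomorphism this forces $f\circ\varphi=f$, and because $f$ was chosen to be a \emph{minimal} right $\rm{Gprj}\mbox{-}\La$-approximation, hence right minimal, $\varphi$ is an automorphism. Therefore $i\circ f$ is minimal right almost split in $\rm{Gprj}\mbox{-}\La$.

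For $(ii)$, apply $(i)$ with $\La$ replaced by $\La^{\op}$: since $P^*$ is an indecomposable projective $\La^{\op}$-module and $g\colon G'\rt\rm{rad}(P^*)$ is a minimal right $\rm{Gprj}\mbox{-}\La^{\op}$-approximation, part $(i)$ gives that $j\circ g\colon G'\rt P^*$ is minimal right almost split in $\rm{Gprj}\mbox{-}\La^{\op}$. Now apply the duality $(-)^*\colon\rm{Gprj}\mbox{-}\La^{\op}\rt\rm{Gprj}\mbox{-}\La$; being a contravariant equivalence it carries minimal right almost split morphisms to minimal left almost split morphisms, so $(j\circ g)^*\colon (P^*)^*\rt (G')^*$ is minimal left almost split in $\rm{Gprj}\mbox{-}\La$, and the natural isomorphism $P^{**}\simeq P$ identifies this with the asserted minimal left almost split morphism $P\rt (G')^*$.

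The argument is essentially formal once the hypotheses are unwound; the only point that genuinely uses the assumption that $P$ is indecomposable is the equivalence ``$h$ not a split epimorphism $\iff \im(h)\subseteq\rm{rad}(P)$'', which combines projectivity with locality of $P$, so that is the step I would state most carefully. Everything else — the factorization through the approximation and the inheritance of right minimality along the monomorphism $i$ — is routine, and the passage to $(ii)$ requires only that $(-)^*$ is a duality on $\rm{Gprj}$, which is recalled just before the lemma.
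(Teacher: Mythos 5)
Your proposal is correct and follows essentially the same route as the paper: a morphism $h\colon X\rt P$ in $\rm{Gprj}\mbox{-}\La$ that is not a split epimorphism has image in $\rm{rad}(P)$ (since $P$ is indecomposable projective), hence factors through $i$ and then through the approximation $f$, while $(ii)$ is obtained from $(i)$ via the duality $(-)^*$. You are in fact slightly more complete than the paper, which leaves the verification of right minimality and of ``$i\circ f$ is not a split epimorphism'' implicit.
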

 \begin{proof}
 	$(i)$ We only need to prove that for any non-split epimorphism $h:G_0 \rt P$, there exists morphism $l:G_0 \rt G$	so that $i\circ f\circ l=h.$ Since $h$ is a non-split epimorphism, then $\rm{Im}( h)$ is a proper submodule in $P$, i.e. $\rm{Im}(h) \subseteq \rm{rad}(P).$   So we can write $h=i \circ s$ for some $s: G_0 \rt \rm{rad}(P)$. Now since $f$ is a right    $\rm{Gprj} \mbox{-} \La$-approximation, then there is $l:G_0 \rt G$ such that $f \circ l=s$ and clearly it works for what we need.
 	
 	$(ii)$ follows from $(i)$ and the duality $(-)^*:\rm{Gprj} \mbox{-} \La \rt \rm{Gprj} \mbox{-} \La^{\rm{op}}.$ 
 \end{proof}

\begin{lemma}\label{Lemma1}
	Let $\La$ be an $\Omega_{\CG}$-algebra. Then any short exact sequence in $\rm{Gprj}\mbox{-} \La$ is a direct sum of the short exact sequences of the  form  $0 \rt \Omega_{\La}(A) \rt P \rt A \rt 0$, $0 \rt 0 \rt B  \st{1} \rt B \rt 0$ and $0 \rt C \st{1} \rt C \rt 0 \rt 0$ 	for some $A, B $ and $C$ in $\rm{Gprj} \mbox{-} \La.$  
\end{lemma}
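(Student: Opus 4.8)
The plan is to regard a short exact sequence $0\to A\xrightarrow{f}B\to C\to 0$ in $\text{Gprj}\mbox{-}\La$ as an object $\left(\begin{smallmatrix}A\\B\end{smallmatrix}\right)_f$ of the monomorphism category $\CS(\text{Gprj}\mbox{-}\La)$ and to read off its decomposition through the functor $\Psi\colon\CS(\text{Gprj}\mbox{-}\La)\to\mmod(\underline{\text{Gprj}}\mbox{-}\La)$. Since $\text{Gprj}\mbox{-}\La$ is functorially finite and resolving it is Krull--Schmidt, and hence so is $\CS(\text{Gprj}\mbox{-}\La)$, being closed under direct summands inside $\mmod T_2(\La)$. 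As objects of $\CS(\text{Gprj}\mbox{-}\La)$, the three kinds of sequences in the statement are $\left(\begin{smallmatrix}\Omega_\La(A)\\P\end{smallmatrix}\right)_\iota$ with $\pi\colon P\to A$ a projective cover and $\iota$ the inclusion of the syzygy, $\left(\begin{smallmatrix}0\\B\end{smallmatrix}\right)_0$, and $\left(\begin{smallmatrix}C\\C\end{smallmatrix}\right)_1$. So, by Krull--Schmidt, it is enough to prove that every object of $\CS(\text{Gprj}\mbox{-}\La)$ is a direct summand of a direct sum of objects of these three kinds.

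The first ingredient is to determine which objects are killed by $\Psi$. For $\left(\begin{smallmatrix}A\\B\end{smallmatrix}\right)_f\in\CS(\text{Gprj}\mbox{-}\La)$ with $p\colon B\to C$ the canonical projection (so $C\in\text{Gprj}\mbox{-}\La$), $\Psi\left(\begin{smallmatrix}A\\B\end{smallmatrix}\right)_f=0$ says that $(-,p)\colon(-,B)\to(-,C)$ is an epimorphism on $\text{Gprj}\mbox{-}\La$; evaluating at $C$ this forces $p$ to split, so the sequence splits, $B\cong A\oplus C$, and $\left(\begin{smallmatrix}A\\B\end{smallmatrix}\right)_f\cong\bigoplus_i\left(\begin{smallmatrix}A_i\\A_i\end{smallmatrix}\right)_1\oplus\bigoplus_j\left(\begin{smallmatrix}0\\C_j\end{smallmatrix}\right)_0$ over the indecomposable summands of $A$ and $C$. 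Conversely such split sequences are visibly killed by $\Psi$, so the $\Psi$-null objects are precisely the direct sums of objects of the forms $\left(\begin{smallmatrix}G\\G\end{smallmatrix}\right)_1$ and $\left(\begin{smallmatrix}0\\G\end{smallmatrix}\right)_0$. Next I would compute, for $H\in\text{Gprj}\mbox{-}\La$ with projective cover $\pi\colon P\to H$ and $\iota\colon\Omega_\La(H)\hookrightarrow P$, that $\Psi\left(\begin{smallmatrix}\Omega_\La(H)\\P\end{smallmatrix}\right)_\iota\cong\underline{\Hom}_\La(-,\underline{H})$; the only point is that a morphism $K\to H$ with $K\in\text{Gprj}\mbox{-}\La$ lifts along $\pi$ if and only if it factors through a projective module. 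Because $\La$ is an $\Omega_\CG$-algebra the category $\mmod(\underline{\text{Gprj}}\mbox{-}\La)$ is semisimple, so for $H$ indecomposable non-projective $\underline{\Hom}_\La(-,\underline{H})$ is a simple object (it is indecomposable in a semisimple category), and these are all the simple objects; moreover $\left(\begin{smallmatrix}\Omega_\La(H)\\P\end{smallmatrix}\right)_\iota$ is then indecomposable — it admits no summand $\left(\begin{smallmatrix}G\\G\end{smallmatrix}\right)_1$ because $\iota(\Omega_\La(H))\subseteq\rad(P)$, and no summand $\left(\begin{smallmatrix}0\\G\end{smallmatrix}\right)_0$ because $H$ has no projective summand, so any non-trivial decomposition would split the simple functor $\underline{\Hom}_\La(-,\underline{H})$.

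Now the main step. Given $X=\left(\begin{smallmatrix}A\\B\end{smallmatrix}\right)_f\in\CS(\text{Gprj}\mbox{-}\La)$, semisimplicity of $\mmod(\underline{\text{Gprj}}\mbox{-}\La)$ gives $\Psi X\cong\bigoplus_{i=1}^{n}\underline{\Hom}_\La(-,\underline{G_i})$ for some indecomposable non-projective $G_i$, hence $\Psi X\cong\Psi Y$ with $Y=\bigoplus_i\left(\begin{smallmatrix}\Omega_\La(G_i)\\P_{G_i}\end{smallmatrix}\right)_{\iota_i}$. Since $\Psi$ induces the equivalence $\CS(\text{Gprj}\mbox{-}\La)/\CU\simeq\mmod(\underline{\text{Gprj}}\mbox{-}\La)$, the objects $X$ and $Y$ become isomorphic in the quotient; picking representatives $\alpha\colon X\to Y$, $\beta\colon Y\to X$ we get $1_X-\beta\alpha\in I(X,X)$, i.e. $1_X-\beta\alpha=\beta_0\alpha_0$ with $\alpha_0\colon X\to U$, $\beta_0\colon U\to X$ and $U$ a direct sum of objects $\left(\begin{smallmatrix}G\\G\end{smallmatrix}\right)_1$ and $\left(\begin{smallmatrix}0\\G\end{smallmatrix}\right)_0$. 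Then $\left(\begin{smallmatrix}\alpha\\\alpha_0\end{smallmatrix}\right)\colon X\to Y\oplus U$ is a split monomorphism, because $(\beta,\beta_0)\left(\begin{smallmatrix}\alpha\\\alpha_0\end{smallmatrix}\right)=\beta\alpha+\beta_0\alpha_0=1_X$, so $X$ is a direct summand of $Y\oplus U$. As $Y\oplus U$ is a direct sum of objects of the three kinds above, Krull--Schmidt forces $X$ to be one as well; translating back to short exact sequences gives the claim.

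The step I expect to be most delicate is the passage through the quotient $\CS(\text{Gprj}\mbox{-}\La)/\CU$: one must be careful that an abstract isomorphism in the quotient really yields, via the split-monomorphism trick above, a genuine direct-sum statement in $\CS(\text{Gprj}\mbox{-}\La)$ itself, and that the $\Psi$-null objects are exactly the additive closure of the two distinguished families, so that both $U$ and the summands of $Y$ are honestly of the required shapes. The identification $\Psi\left(\begin{smallmatrix}\Omega_\La(H)\\P\end{smallmatrix}\right)_\iota\cong\underline{\Hom}_\La(-,\underline{H})$ and the indecomposability of this object are the other places needing care, but both reduce to the elementary fact that lifting along a projective cover is the same as factoring through a projective, together with the fact that $\rad(P)$ contains no nonzero direct summand of $P$.
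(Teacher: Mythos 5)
Your proof is correct, and it reaches the conclusion by a genuinely different (though Yoneda-dual) route. The shared pivot is the same: semisimplicity of $\mmod (\underline{\text{Gprj}}\mbox{-}\La)$ forces the cokernel functor attached to the given short exact sequence to be isomorphic to $(-,\underline{G})$ for some $G$. The paper then stays entirely in $\mmod (\text{Gprj}\mbox{-}\La)$: it reads the Yoneda image of the sequence as a projective resolution of this functor, compares it with the minimal resolution $0\rt (-,\Omega_{\La}(G))\rt (-,P)\rt (-,G)\rt (-,\underline{G})\rt 0$, and invokes the fact that any projective resolution is the minimal one plus split exact complexes; pulling back along the Yoneda embedding gives the decomposition in two lines. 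You instead pass to the monomorphism category, show $\Psi X\cong \Psi Y$ for the standard object $Y=\bigoplus_i\left(\begin{smallmatrix}\Omega_{\La}(G_i)\\ P_{G_i}\end{smallmatrix}\right)_{\iota_i}$, and then exploit the ideal-quotient equivalence $\CS(\text{Gprj}\mbox{-}\La)/\CU\simeq \mmod (\underline{\text{Gprj}}\mbox{-}\La)$ together with the split-monomorphism trick and Krull--Schmidt to exhibit $X$ as a summand of $Y\oplus U$ with $U$ in the additive closure of the trivial objects. The two finishes encode the same fact --- uniqueness of minimal projective resolutions up to split summands is precisely the statement that $X$ and $Y$ agree up to $\CU$-summands --- but yours presupposes more machinery (the full/dense/objective property of $\Psi$ from the introduction, and the Krull--Schmidt property of $\CS(\text{Gprj}\mbox{-}\La)$, which you correctly justify via closure under summands in $\mmod T_2(\La)$), while in exchange it delivers as byproducts the classification of $\Psi$-null objects and the indecomposability of $\left(\begin{smallmatrix}\Omega_{\La}(H)\\ P\end{smallmatrix}\right)_{\iota}$, facts the paper only extracts later (in Theorem \ref{Theorem 4.10}(1)). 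The delicate points you flag --- that the kernel of $\Psi$ is exactly the additive closure of the two distinguished families (via evaluating the epimorphism $(-,p)$ at $\text{Cok}(f)$), the identification $\Psi\left(\begin{smallmatrix}\Omega_{\La}(H)\\ P\end{smallmatrix}\right)_{\iota}\cong (-,\underline{H})$, and the passage from an isomorphism in the quotient to a genuine split monomorphism --- are all handled correctly, and no circularity is introduced since the quotient equivalence comes from the external references and not from results downstream of this lemma.
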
 
\begin{proof}
	Take an exact sequence $0 \rt G_2 \rt G_1 \rt G_0 \rt 0$ in $\rm{Gprj}\mbox{-} \La$. This short exact sequence induces the sequence  $$ 0 \rt (-, G_2) \rt (-, G_1) \rt (-, G_0) \rt F \rt 0 \ \ \ \  (1)$$
	in $\mmod \rm{Gprj}\mbox{-}\La.$ Since $\rm{mod}\mbox{-}(\underline{\rm{Gprj}}\mbox{-} \La)$ is a  semisimple abelian category then $F \simeq (-, \underline{G})$ for some $G$ in $\rm{Gprj}\mbox{-} \La$. On the other hand, we know a minimal projective resolution of $(-, \underline{G})$ is as the following $$ 0 \rt (-, \Omega_{\La}(G)) \rt (-, P) \rt (-, G) \rt (-, \underline{G}) \rt 0 \ \ \ (2) $$ where $P \rt G $ is a projective cover of $G.$ By comparing $(1)$ and $(2)$ as two projective resolutions of $F$ in $\mmod (\rm{Gprj} \mbox{-} \La)$, and using this fact that the latter is minimal  then we get our result. Indeed, the fact we have used here  is  known  in the homology algebra, that is,  any projective resolution of an object in a perfect category  is a direct sum of  the minimal projective resolution of the given object and possibly some split exact complexes. 
\end{proof}

\begin{proposition}\label{CM-finite}
	Let $\La$ be an  $\Omega_{\CG}$-algebra.  Then $\La$ is $\rm{CM}$-finite.
\end{proposition}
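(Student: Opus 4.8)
The plan is to deduce the finiteness of the set of indecomposable Gorenstein projective modules from the structure of the Auslander--Reiten quiver of $\Gprj\La$, using Lemma \ref{Lemma1} to control the middle terms of the almost split sequences. As recalled at the beginning of this section, $\Gprj\La$ is a functorially finite resolving subcategory of $\mmod \La$, and hence a Krull--Schmidt category admitting almost split sequences (by \cite{AS}). First I would note that the $\Ext$-projective objects of $\Gprj\La$ are exactly the projective $\La$-modules: a projective module is $\Ext$-projective since $\Ext^1_{\La}(P,-)=0$ for $P$ projective, while an $\Ext$-projective $G\in\Gprj\La$ is a direct summand of the projective cover in a presentation $0\rt\Omega_{\La}(G)\rt P\rt G\rt 0$ (here $\Omega_{\La}(G)\in\Gprj\La$ because $\Gprj\La$ is resolving), and hence is itself projective. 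Consequently there is an almost split sequence ending at $G$ for every indecomposable non-projective $G\in\Gprj\La$.

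Next I would pin down the middle term of such a sequence. Let $0\rt A\rt B\rt G\rt 0$ be the almost split sequence ending at an indecomposable non-projective $G$. By Lemma \ref{Lemma1} this short exact sequence is a finite direct sum of sequences of the three types listed there. Comparing right-hand terms and using that $G$ is indecomposable, exactly one summand $\eta_1$ has nonzero right-hand term, which is then isomorphic to $G$, and every other summand is of the type $0\rt C'\st{1}\rt C'\rt 0\rt 0$; moreover $\eta_1$ is of type $0\rt 0\rt G\st{1}\rt G\rt 0$ or of type $0\rt\Omega_{\La}(G)\rt P\rt G\rt 0$. The first possibility for $\eta_1$ would exhibit the whole sequence as a direct sum of split sequences, hence split, contradicting that an almost split sequence is non-split. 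Thus $\eta_1$ is of the second type, with $P\rt G$ a projective cover, so the middle term $B$ has the nonzero projective module $P$ as a direct summand. In particular $G$ is the target of an irreducible morphism in $\Gprj\La$ from some indecomposable projective $\La$-module (an indecomposable summand of $P$).

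It remains to count. Since $\La$ is an Artin algebra there are only finitely many indecomposable projective $\La$-modules $P_1,\dots,P_n$; and for each $i$ only finitely many indecomposable objects of $\Gprj\La$ admit an irreducible morphism out of $P_i$, namely the indecomposable direct summands of the (finitely generated) codomain of the minimal left almost split morphism out of $P_i$. Hence $\Gprj\La$ has only finitely many irreducible morphisms with projective source, and by the previous paragraph each indecomposable non-projective Gorenstein projective module is the target of at least one of them. Therefore there are only finitely many indecomposable non-projective Gorenstein projective modules; together with $P_1,\dots,P_n$ this shows that $\Gprj\La$ has only finitely many isomorphism classes of indecomposable objects, i.e.\ $\La$ is $\rm{CM}$-finite.

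The only delicate point is the argument in the middle paragraph: one must combine Lemma \ref{Lemma1} with the non-splitness of almost split sequences to rule out the degenerate summand $0\rt 0\rt G\st{1}\rt G\rt 0$ and thereby conclude that the middle term of the almost split sequence ending at $G$ contains the projective cover of $G$ as a summand. Once that is done, the remainder is a routine finiteness count resting on the finitely many indecomposable projectives and the finite valency of the Auslander--Reiten quiver of $\Gprj\La$.
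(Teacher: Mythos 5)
Your proof is correct and follows essentially the same route as the paper: use Lemma \ref{Lemma1} together with non-splitness to identify the almost split sequence ending at an indecomposable non-projective $G$ as the projective-cover sequence $0\rt\Omega_{\La}(G)\rt P\rt G\rt 0$, deduce an irreducible morphism from an indecomposable projective to $G$, and count via the minimal left almost split morphisms out of the finitely many indecomposable projectives (Lemma \ref{Radical} in the paper). Your version merely spells out a few details the paper leaves implicit, such as why almost split sequences ending at every indecomposable non-projective exist and why the degenerate summand $0\rt 0\rt G\st{1}\rt G\rt 0$ is excluded.
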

\begin{proof}
 Let $G$ be an indecomposable non-projective Gorenstein projective  module. According to Lemma \ref{Lemma1}, since an  almost split sequence does not split, so the almost split sequence in $\rm{Gprj}\mbox{-}\La$ obtained by getting the projective cover of $G$. Thus it is of the form  $0 \rt \Omega_{\La}(G) \rt P \st{f} \rt G \rt 0$ in $\rm{Gprj} \mbox{-} \La$. Let $Q$ be an indecomposable direct summand of $P$. The natural injection $f_{\mid Q}:Q \rt G$ is  irreducible by using the  general facts in the theory of almost split sequences for subcategories, e.g. \cite[Theorem 2.2.2]{Kr}. In view of Lemma \ref{Radical} there is a minimal left almost split morphism $g:Q \rt Y$ in $\rm{Gprj}\mbox{-} \La$. Hence  $G$ is a direct  summand of $Y.$ We know that $\mmod \La$ contains only finitely many indecomposable projective modules up to isomorphism. As we have seen any indecomposable   non-projective Gorenstein projective module is related to an indecomposable projective module via a minimal left almost split morphism. Because of the uniqueness of the minimal left almost split morphisms, we get the result. We are done.
\end{proof} 

We recall from \cite[Lemma 3.4]{C} that for a semisimple abelian category $\CA $ and an auto-equivalence $\Sigma $ on $\CA$, there is an unique triangulated structure on $\CA$ with $\Sigma$ the translation functor. Indeed, all the triangles are split. We denote  the resulting triangulated category by $(\CA, \ \Sigma)$. We call a triangulated category  semisimple  provided  that it is triangle equivalent to $(\CA, \ \Sigma)$ for some semisimple abelian category.

Recall from \cite{RV} that a Serre functor for a $R$-linear triangulated category ($R$ is a commutative artinian ring) $\CT$ is an auto-equivalence $S:\CT\rt \CT$ 
together with an isomorphism $D\Hom_{\CT}(X, -)\simeq \Hom_{\CT}(-, S(X))$ for each $X \in \CT$,  here  $D$ denotes the ordinary duality.

\begin{proposition}\label{Omega-algebra}
	Let $\La$ be an  $\Omega_{\CG}$-algebra. Then
	\begin{itemize}
		
		\item[$(i)$] $\underline{\rm{Gprj}}\mbox{-} \La$ is a semisimple triangulated category;
		
		\item[$(ii)$] For any non-projective Gorenstein projective indecomposable $G$,  the (relative) Auslander-Reiten translation in $\rm{Gprj} \mbox{-} \La$ is the first syzygy $\Omega_{\La}(G);$
		
		\item[$(iii)$] Serre functor over $\underline{\rm{Gprj}}\mbox{-} \La$ acts on objects by the identity, i.e. $S_{\CG}(G)\simeq G$ for each $G$ in $\underline{\rm{Gprj}}\mbox{-} \La$.	
	\end{itemize}		
\end{proposition}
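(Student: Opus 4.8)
The plan is to deduce all three items from Lemma~\ref{Lemma1} together with the description of almost split sequences it provides.

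For $(i)$: by definition, $\La$ being an $\Omega_{\CG}$-algebra means $\mmod(\underline{\rm{Gprj}}\mbox{-}\La)$ is semisimple abelian. I would invoke the cited fact from \cite[Lemma 3.4]{C}: on the semisimple abelian category $\mmod(\underline{\rm{Gprj}}\mbox{-}\La)$ — or rather on $\underline{\rm{Gprj}}\mbox{-}\La$ itself — there is a unique triangulated structure with all triangles split, once we fix an auto-equivalence to serve as the shift. The honest work here is to check that $\underline{\rm{Gprj}}\mbox{-}\La$ already carries a triangulated structure (the usual stable/Frobenius one, since $\rm{Gprj}\mbox{-}\La$ is a Frobenius category with the projectives as the injective-projective objects, and its stable category is triangulated with shift $\Omega^{-1}$), and that this structure coincides with the split one. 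The key point is that Lemma~\ref{Lemma1} forces every short exact sequence in $\rm{Gprj}\mbox{-}\La$ to be a direct sum of a "projective-cover" sequence $0\to\Omega_\La(A)\to P\to A\to 0$ and trivially split pieces; since $P$ is projective, such a sequence becomes a split triangle in $\underline{\rm{Gprj}}\mbox{-}\La$ (it is isomorphic in the stable category to $\Omega_\La(A)\to 0\to A\xrightarrow{\ \sim\ }\Omega_\La^{-1}\Omega_\La(A)$, i.e. the mapping cone of the zero map). Hence every distinguished triangle splits, and $\underline{\rm{Gprj}}\mbox{-}\La \simeq (\mmod(\underline{\rm{Gprj}}\mbox{-}\La)\text{ or an equivalent semisimple abelian category},\ \Omega^{-1})$ is semisimple in the sense defined above.

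For $(ii)$: the relative Auslander--Reiten translation $\tau_{\CG}$ in the functorially finite resolving subcategory $\rm{Gprj}\mbox{-}\La$ is computed from almost split sequences there. Take $G$ indecomposable non-projective Gorenstein projective. By Lemma~\ref{Lemma1}, or rather the argument already run in the proof of Proposition~\ref{CM-finite}, the almost split sequence in $\rm{Gprj}\mbox{-}\La$ ending at $G$ is exactly the projective-cover sequence $0\to\Omega_\La(G)\to P\xrightarrow{f} G\to 0$ (the other two types of summands in Lemma~\ref{Lemma1} split, so an almost split sequence, being non-split and indecomposable in $\rm{H}(\La)$, must be of the first type). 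Reading off the left-hand term gives $\tau_{\CG}(G)\simeq\Omega_\La(G)$.

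For $(iii)$: a Serre functor $S_{\CG}$ on $\underline{\rm{Gprj}}\mbox{-}\La$ exists because this is a Hom-finite $R$-linear triangulated category with almost split triangles (equivalently, by the general theory of \cite{RV} applied to the stable category of the Frobenius category $\rm{Gprj}\mbox{-}\La$). The standard relation between Serre duality and Auslander--Reiten translation in a triangulated category is $S_{\CG}\simeq \tau_{\CG}\circ\Sigma$, where $\Sigma=\Omega^{-1}$ is the shift. By $(ii)$, on objects $\tau_{\CG}$ acts as $\Omega_\La$, so $S_{\CG}(G)\simeq\Omega_\La(\Omega_\La^{-1}(G))\simeq G$ for every indecomposable, hence for every object. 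I expect the main obstacle to be the bookkeeping in $(i)$ — precisely identifying the Frobenius/stable triangulated structure on $\underline{\rm{Gprj}}\mbox{-}\La$ with the abstract split structure from \cite[Lemma 3.4]{C}, i.e. checking that the candidate shift $\Omega^{-1}$ is an auto-equivalence of the (semisimple) stable category and that every triangle coming from a conflation in $\rm{Gprj}\mbox{-}\La$ is split; items $(ii)$ and $(iii)$ are then short formal consequences.
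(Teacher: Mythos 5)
Your proposal is correct and follows essentially the same route as the paper: Lemma~\ref{Lemma1} forces every triangle in $\underline{\rm{Gprj}}\mbox{-}\La$ to split and every almost split sequence in $\rm{Gprj}\mbox{-}\La$ to be a projective-cover sequence, and $(iii)$ is the formal consequence of $(ii)$ via $S_{\CG}=\tau_{\CG}\circ\Sigma$ (equivalently $\tau_{\CG}=S_{\CG}\circ\Omega_{\La}$). The one step you leave implicit --- passing from ``all triangles split'' to an actual semisimple abelian structure on $\underline{\rm{Gprj}}\mbox{-}\La$ --- is supplied in the paper by completing an arbitrary morphism to a (split) triangle, writing it as a direct sum of an isomorphism and a zero map, and reading off kernels and cokernels from that decomposition.
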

\begin{proof}
$(i)$	Assume that $\La$ is  an $\Omega_{\CG}$-algebra. Then by Lemma \ref{Lemma1} we have: any short exact sequence with all terms in $\rm{Gprj} \mbox{-} \La$  can be written as a direct sum of the short exact sequences of the form  $0 \rt \Omega_{\La}(A) \rt P \rt A \rt 0$, $0 \rt 0 \rt B  \st{1} \rt B \rt 0$ and $0 \rt C \st{1} \rt C \rt 0 \rt 0$
	for some $A, B $ and $C$ in $\rm{Gprj} \mbox{-} \La.$ Hence by the structure of the  triangles in $\underline{\rm{Gprj}}\mbox{-} \La$,  which is  induced by the short exact sequences in $\rm{Gprj}\mbox{-} \La$, and in view of the shape of the short exact sequences in $\rm{Gprj}\mbox{-}\La$ as we mentioned in the above whenever $\La$ is an $\Omega_{\CG}$-Algebra,  we get  all possible triangles in $\underline{\rm{Gprj}}\mbox{-} \La$ are a direst sums of the following trivial triangles $K \rt 0 \rt  K[1] \st{1} \rt K[1] $, $K \st{1} \rt K \rt 0 \rt K[1]$ and $0 \rt K \st{1} \rt  K \rt 0.$ By the axioms of triangulated categories, every morphism $f:X \rt Y$ in $\underline{\rm{Gprj}}\mbox{-} \La$  is completed to  a triangle, hence, by the above,   we can   write as $f=\bigl(\begin{smallmatrix}
	f_1 & 0 \\
	0 & 0
	\end{smallmatrix}\bigr):X_1\oplus Y_1 \rt X_2 \oplus Y_2$, where $f_1:X_1 \rt X_2$ is an isomorphism in $\underline{\rm{Gprj}}\mbox{-} \La$. Define the natural injection $i:Y_1 \rt X_1 \oplus Y_1$  and the natural projection $X_2 \oplus Y_2 \rt Y_2$ as kernel and cokernel of $f$, respectively. Then this gives a semisimple abelian structure over $\underline{\rm{Gprj}}\mbox{-} \La$.	 Now if we consider the equivalence $\Omega_{\La}:\underline{\rm{Gprj}}\mbox{-} \La \rt \underline{\rm{Gprj}}\mbox{-} \La$, then the resulting triangulated category $(\underline{\rm{Gprj}}\mbox{-} \La, \Omega)$ is triangle equivalent to $\underline{\rm{Gprj}}\mbox{-} \La$, so we get $(i).$
	
	$(ii)$  follows from Lemma \ref{Lemma1} since any almost split sequence does not split. We can deduce  $ (iii)$ in view of $(ii)$  and  using this point that $\tau_{\CG}=S_{\CG}\circ \Omega_{\La} .$ 
\end{proof}	

The above proposition explains for a justification  of the terminology of $\Omega_{\CG}$-algebras. The notation ``$\Omega_{\CG}$" stand for the Auslander-Reiten translation $\tau_{\CG}$ coincides with the first syzygy $\Omega_{\La}$.

\begin{proposition}\label{Symmetry}
	Let $\La$ be  an $\Omega_{\CG}$-algebra. Then $\La^{\rm{op}}$ is also an $\Omega_{\CG}$-algebra. 
\end{proposition}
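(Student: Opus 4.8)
The plan is to repackage the defining property of an $\Omega_{\CG}$-algebra as a manifestly self-dual condition on the stable category $\underline{\rm{Gprj}}\mbox{-}\La$, and then transport it across the duality $(-)^{*}=\Hom_{\La}(-,\La)$. The key reduction I would establish first is the reformulation: \emph{$\La$ is an $\Omega_{\CG}$-algebra if and only if $\underline{\rm{Gprj}}\mbox{-}\La$ is, as an additive category, a semisimple abelian category} --- equivalently, since it is Krull--Schmidt, $\underline{\Hom}_{\La}(\underline{G},\underline{H})=0$ for non-isomorphic indecomposables $\underline{G},\underline{H}$ in $\underline{\rm{Gprj}}\mbox{-}\La$ and $\underline{\End}_{\La}(\underline{G})$ is a division ring for each indecomposable $\underline{G}$. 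Once this is available the proposition is immediate: the duality $(-)^{*}$ yields an additive equivalence $\underline{\rm{Gprj}}\mbox{-}\La^{\rm{op}}\simeq(\underline{\rm{Gprj}}\mbox{-}\La)^{\rm{op}}$, and the property ``semisimple abelian'' is patently preserved by additive equivalences and by passage to the opposite category.

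For the ``only if'' part of the reformulation I would simply invoke Proposition~\ref{Omega-algebra}(i): if $\La$ is $\Omega_{\CG}$ then $\underline{\rm{Gprj}}\mbox{-}\La$ is a semisimple triangulated category, so its underlying additive category is semisimple abelian; and by Proposition~\ref{CM-finite} it has only finitely many indecomposable objects. (One can also argue directly: semisimplicity of $\mmod(\underline{\rm{Gprj}}\mbox{-}\La)$ forces every representable $\underline{\Hom}_{\La}(-,\underline{G})$ to be a semisimple object of it, and a semisimple object with local endomorphism ring is simple --- which is exactly the displayed condition.) For the ``if'' part, I would use the elementary observation that over a Krull--Schmidt semisimple additive category $\CA$ every morphism is, up to isomorphism, a direct sum of identity maps and zero maps; hence every finitely presented functor over $\CA$ is a direct sum of representable functors, so that every object of $\mmod \CA$ is projective and $\mmod \CA$ is semisimple.

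It remains to carry out the transfer. As recalled just before the statement, $(-)^{*}$ is a duality $\rm{Gprj}\mbox{-}\La\to\rm{Gprj}\mbox{-}\La^{\rm{op}}$ with $(-)^{**}\cong\mathrm{id}$; since it carries $\rm{prj}\mbox{-}\La$ to $\rm{prj}\mbox{-}\La^{\rm{op}}$ it descends to a duality on the stable categories, that is, an additive equivalence $\underline{\rm{Gprj}}\mbox{-}\La^{\rm{op}}\simeq(\underline{\rm{Gprj}}\mbox{-}\La)^{\rm{op}}$. Now assume $\La$ is $\Omega_{\CG}$. By the reformulation together with Proposition~\ref{CM-finite}, $\underline{\rm{Gprj}}\mbox{-}\La$ is semisimple abelian with finitely many indecomposables; therefore so is $(\underline{\rm{Gprj}}\mbox{-}\La)^{\rm{op}}\simeq\underline{\rm{Gprj}}\mbox{-}\La^{\rm{op}}$. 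Applying the ``if'' direction of the reformulation to $\La^{\rm{op}}$ shows that $\mmod(\underline{\rm{Gprj}}\mbox{-}\La^{\rm{op}})$ is semisimple, i.e. $\La^{\rm{op}}$ is an $\Omega_{\CG}$-algebra. (One checks along the way that $\La^{\rm{op}}$ is $\rm{CM}$-finite --- indeed $\rm{Gprj}\mbox{-}\La^{\rm{op}}=\add(G^{*})$ whenever $\rm{Gprj}\mbox{-}\La=\add(G)$ --- so that $\rm{Gprj}\mbox{-}\La^{\rm{op}}$ is functorially finite and the standing hypothesis of this section also holds for $\La^{\rm{op}}$.)

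The one point I would be careful about is the temptation to claim that $(-)^{*}$ induces a duality directly between the functor categories $\mmod(\underline{\rm{Gprj}}\mbox{-}\La)$ and $\mmod(\underline{\rm{Gprj}}\mbox{-}\La^{\rm{op}})$. It does not: the relevant transpose-type construction is well defined only on the stable functor categories (modulo projectives), not on $\mmod$ itself, and the naive attempt yields only $\mmod(\underline{\rm{Gprj}}\mbox{-}\La)\simeq\mmod((\underline{\rm{Gprj}}\mbox{-}\La^{\rm{op}})^{\rm{op}})$, which still leaves one to compare $\mmod \CE$ with $\mmod(\CE^{\rm{op}})$. Going through the additive reformulation above sidesteps this entirely, because semisimplicity of an additive category is visibly self-dual; this is the step I expect to require the most care.
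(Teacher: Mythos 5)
Your proof is correct and follows the same route the paper takes: the paper's entire argument is the one-line appeal to the duality $(-)^{*}\colon \rm{Gprj}\mbox{-}\La\rt\rm{Gprj}\mbox{-}\La^{\rm{op}}$, and your reformulation of the $\Omega_{\CG}$-condition as the self-dual statement that $\underline{\rm{Gprj}}\mbox{-}\La$ is semisimple as an additive category is precisely the detail needed to make that one-liner rigorous (in particular your caution about $\mmod\CE$ versus $\mmod(\CE^{\rm{op}})$ is well placed and is exactly what the paper glosses over).
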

\begin{proof}
	It is easily proved by using the duality $(-)^*:\rm{Gprj} \mbox{-} \La \rt \rm{Gprj} \mbox{-} \La^{\rm{op}}.$ 
\end{proof}
Let $\La$ be an $\Omega_{\CG}$-algebra. As there are only a finite number of  non-isomorphic indecomposable  Gorenstein projective  modules (up to isomorphism) and the syzygy functor preserves the indecomposable Gorenstein projective modules (\cite[Lemma 2.2]{C1}), so we get the non-projective Gorenstein projective modules are $\Omega_{\La}$-periodic modules, meaning that there exists $n > 0$, depending on a given non-projective Gorenstein projective module $G$,  such that $\Omega^n_{\La}(G)\simeq G$. 

On indecomposable non-projective modules in $\text{Gprj}\mbox{-}\La$, we define an equivalence relation, that is, $G\sim G'$ if and only if there is some  $n \in \mathbb{Z} $ such that $G\simeq \Omega^n_{\La}(G')$. Note that here $\Omega^n_{\La}(G)$ for $n < 0$ are defined by using right (minimal) projective resolution of $G$. In fact, since $\rm{Grpj} \mbox{-}\La$ is a Frobenius category with $\rm{prj}\mbox{-}\La$ as projective-injective objects,  we can construct a right projective resolution by $\rm{prj}\mbox{-}\La.$ 

 Let $\mathcal{C}(\La)$ denote the set of the equivalence classes  with respect to  the relation $\sim$. Moreover,  for an indecomposable non-projective Gorenstein  projective  module $G$ we write $l(G)$ for  the minimum number $n> 0$ such that $\Omega^n_{\La}(G)\simeq G$. One can see the equivalence class $[G]$ has $l(G)$ elements up to isomorphism.

 	Let $G$ be an indecomposable non-projective module in $\text{Gprj}\mbox{-}\La$. We point out if $\delta_1: 0 \rt \Omega_{\La}(G)\rt P\rt G\rt 0$ is an almost split sequence in $\text{Gprj}\mbox{-}\La$, then for every $i\geq 0$, the short exact sequence $0 \rt \Omega^{i+1}_{\La}(G)\rt P^i\rt \Omega^i_{\La}(G)\rt 0$, setting $P^0=P$, is an almost split sequence in $\text{Gprj}\mbox{-}\La$. Indeed, the middle term of the almost split sequence  $0 \rt \tau_{\CG}(\Omega_{\La}(G))\rt B\rt \Omega_{\La}(G)\rt 0$ in $\text{Gprj}\mbox{-}\La$ has no non-projective direct summand, otherwise the middle term of $\delta_1$ has an indecomposable non-projective direct summand, which is a contradiction. Hence $B$ is projective so $\tau_{\CG}(\Omega_{\La}(G))\simeq\Omega^2_{\La}(A)$. Repeating the same argument we can prove for  all  $i \geq 0$. Dually, 	if the short exact sequence $\delta_2: 0 \rt G\rt Q\rt \Sigma_{\La}(G)\rt 0$ is an almost split sequence in $\text{Gprj}\mbox{-}\La$, then for every $i\geq 0$, the short exact sequence $0 \rt \Sigma_{\La}^{i}(G)\rt Q^i\rt \Sigma_{\La}^{i+1}(G)\rt 0$, setting $Q^0=Q$, is an almost split sequence in $\text{Gprj}\mbox{-}\La$. Lastly, If both the short exact sequences $\delta_1$ and $\delta_2$ are almost split sequences in $\text{Gprj}\mbox{-}\La$, then the $\tau_{\CG}$-orbit of $G$ consists entirely of $\Omega^i_{\La}(G)$ and $\Sigma_{\La}^i(G)$ for all $i\geq 0.$ Therefore, in this case the $\tau_{\CG}$-orbit of $G$ is equal to the equivalence class $[G]$ in $\CC(\La)$ (see also Propositions \ref{Prop 3.5} and \ref{Propo 3.7}  for an alternative way to get the above observation).

 \begin{remark}\label{remark 2.7}
 Let $\La=k\CQ/I$ be a monomial algebra, that is, the quotient algebra of the path algebra $k\CQ$	of a finite quiver $\CQ$ modulo the ideal $I$ generated by the paths of length at least 2. Following \cite[Definition 3.7]{CSZ}, we recall that a non-zero path $p$  in $\La$ a perfect path, provided that there exists a sequence
 $p = p_1, p_2,\cdots ,p_n, p_{n+1}= p$  of non-zero paths such that $(p_i, p_{i+1})$ are perfect pairs (see \cite[Definition 3.3]{CSZ}) for all $1\leqslant i \leqslant n$. If the given
 non-zero paths $p_i$ are pairwise distinct, we refer to the sequence $p = p_1, p_2,...,p_n$  as a relation cycle for $p$, which has length $n$. It is proved in \cite[Theorem 4.1]{CSZ}  that there is a bijection between the set of perfect paths
 in $\La$ and the isoclass set of indecomposable non-projective Gorenstein-projective
 $\La$-modules, which is a generalization of the classification  over gentle algebras given  in \cite{Ka}. The bijection is given by  sending a perfect path $p$ to the $A$-module $pA$, where the right ideal $pA$ is generated by all paths $q \notin I$ such that $q=pq'$ for some path $q'$. 
 
 The situation is better when we concentrate on the quadratic monomial algebras. We recall that $\La$ is quadratic monomial provided that the ideal $I$ is generated by paths of length 2. In the rest of this remark assume  $\La$ is a quadratic monomial algebra.  By \cite[lemma 3.4]{CSZ} 
 for a perfect pair $(p, q)$ in $\La$, both $p$ and $q$ are necessarily arrows. In particular, a perfect path is an arrow and its relation cycle consists entirely of arrows. Then by using \cite[Theorem 4.1]{CSZ} we get there is a bijection between perfect arrows in $\La$ and the isoclass set of indecomposable non-projective Gorenstein-projective
 $\La$-modules. 
 
 In \cite[Definition 5.1]{CSZ} the relation quiver $\CR_{\La}$ of $\La$ is defined as follows: the vertices are given by the arrows in $Q$, and there  is an arrow $[\beta\alpha]:\alpha\rt \beta$ if the head of $\beta$ is equal to the tail of $\alpha$  and $\beta\alpha$ lies in $I$. A component $\mathcal{D}$ of $\CR_{\La}$ is called perfect if  it is a basic cycle \cite[definition 5.1]{CSZ}. From \cite[Lemma 5.3]{CSZ}
  we observe that an arrow $\alpha$ is perfect if and only if the corresponding vertex in $\CR_{\La}$ belongs   to a perfect component. As mentioned in the proof of \cite[Theorem 5.7]{CSZ}, for a perfect arrow $\alpha$ lying in the perfect component $\CC$, its relation cycle is of the form $\alpha = \alpha_1, \alpha_2,\cdots, \alpha_{d_i}$ with $\alpha_{d_{i+1}}=\alpha$.  And we have $\Omega_{\La}(\alpha_i\La)=\alpha_{i+1}\La$ and $\Sigma(\alpha_i\La)=\alpha_{i-1}\La.$ According to the  above observation, we see that the set formed by all perfect components of the relation quiver $\CR_{\La}$ are in   one-to-one correspondence with  the elements of $\CC(\La)$, by sending  a given perfect component $\mathcal{D}$ to the equivalence class $[\alpha\La]$ in $\mathcal{C}(\La)$, where  $\alpha$ is a vertex of $\mathcal{D}$.  	
 \end{remark}

Let $\CT$ be an additive category and $F:\CT \rt \CT$ an automorphism. Following \cite{K}, the orbit category $\CT/F$ has the same objects as $\CT$ and its morphisms from $X$ to $Y$ are $\oplus_{n \in \mathbb{Z}}\Hom_{\CT}(X, F^n(Y) ).$ Further, the singularity category $\mathbb{D}_{\text{sg}}(\La)$ of an algebra $\La$ is defined as the Verdier quotient of the bonded derived category  $\mathbb{D}^{\text{b}}(\mmod \La)$ by the subcategory of bounded complexes of projective modules.

\begin{proposition}\label{proposition 2.8}
	Let $\La$ be an $\Omega_{\CG}$-algebra over an algebraic closed filed $k$. If $\La$ is a Gorenstein algebra, then there is an equivalence of triangulated categories $$\mathbb{D}_{\text{sg}}(\La)\simeq \prod_{[G]\in \mathcal{C}(\La)} \frac{\mathbb{D}^{\rm{b}}(\mmod k)}{l(G)} $$ where $\frac{\mathbb{D}^{\rm{b}}(\mmod k)}{l(G)}$ denotes the triangulated orbit category of $\mathcal{D}^{\rm{b}}(\mmod k)$ with respect to the functor $F=[l(G)]$, the $l(G)$-the power of the shift functor.	
\end{proposition}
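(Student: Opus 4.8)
The plan is to identify $\mathbb{D}_{\text{sg}}(\La)$ with $\underline{\text{Gprj}}\mbox{-}\La$ via Buchweitz's theorem (valid since $\La$ is Gorenstein) and then decompose the latter using the equivalence relation $\sim$ on indecomposable non-projective Gorenstein projectives. Concretely, since $\La$ is Gorenstein, the canonical functor $\underline{\text{Gprj}}\mbox{-}\La \rt \mathbb{D}_{\text{sg}}(\La)$ is a triangle equivalence; so it suffices to describe the right-hand side intrinsically as a triangulated category. By Proposition \ref{Omega-algebra}$(i)$, $\underline{\text{Gprj}}\mbox{-}\La$ is a semisimple triangulated category whose translation functor is (up to isomorphism) $\Omega_{\La}^{-1}$; all triangles split. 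The idea is to cut this category into blocks indexed by the equivalence classes $[G] \in \mathcal{C}(\La)$: a block corresponding to $[G]$ is the full additive subcategory on $\{\Omega^i_{\La}(G) : i \in \mathbb{Z}\}$, which by definition of $l(G)$ has exactly $l(G)$ non-isomorphic indecomposables arranged in a single $\Omega_{\La}$-orbit of length $l(G)$, with $\Omega_{\La}$ cyclically permuting them.

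Next I would match each block with the claimed orbit category. The category $\mathbb{D}^{\rm b}(\mmod k)$, with $k$ a field, is the semisimple triangulated category whose indecomposables are the shifts $k[n]$, $n \in \Z$, with translation $[1]$; its orbit category under $F = [l(G)]$ therefore has exactly $l(G)$ indecomposables $k, k[1], \dots, k[l(G)-1]$, cyclically permuted by the shift. Thus each block of $\underline{\text{Gprj}}\mbox{-}\La$ and each factor $\mathbb{D}^{\rm b}(\mmod k)/l(G)$ are both ``cyclic'' semisimple triangulated categories of the same length $l(G)$, and the matching of indecomposables $\Omega^i_{\La}(G) \leftrightarrow k[-i]$ is visibly compatible with the translations. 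To turn this bijection-on-objects into a triangle equivalence one must also check it respects Hom-spaces; here one uses that $\End_{\underline{\text{Gprj}}}(\underline{G})$ is a division ring — this is where the hypothesis that $k$ is algebraically closed enters, forcing these endomorphism rings to be $k$ itself, matching $\End$ in $\mathbb{D}^{\rm b}(\mmod k)/l(G)$ — and that morphisms between distinct indecomposables in a semisimple category are zero. Finally, one assembles the blocks: because $\mathbb{D}_{\text{sg}}(\La)$ is semisimple, it is the product (equivalently coproduct) of its blocks, giving the displayed product $\prod_{[G] \in \mathcal{C}(\La)} \mathbb{D}^{\rm b}(\mmod k)/l(G)$.

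For the verification that the assignment is a triangle functor, I would invoke Keller's construction of the triangulated orbit category together with the fact (Proposition \ref{Omega-algebra}$(i)$ and \cite[Lemma 3.4]{C}) that on a semisimple abelian category with a chosen autoequivalence the triangulated structure is \emph{unique} with split triangles. So it is enough to produce an equivalence of $k$-linear categories intertwining the two autoequivalences $\Omega_{\La}^{-1}$ and $[1]$ — the triangulated structures are then automatically matched. Reducing to this additive statement is the crux of the argument.

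\textbf{Main obstacle.} The main technical point I expect to be delicate is pinning down that the endomorphism rings $\underline{\End}_{\La}(\underline{G})$ of the indecomposable non-projective Gorenstein projectives are exactly $k$, so that the local structure of $\underline{\text{Gprj}}\mbox{-}\La$ genuinely matches $\mathbb{D}^{\rm b}(\mmod k)$ rather than $\mathbb{D}^{\rm b}(\mmod D)$ for some division algebra $D$; this is where algebraic closedness of $k$ is essential and must be used carefully (for a general field the statement would have to be modified). A secondary bookkeeping issue is checking that $\Omega_{\La}$ acts \emph{freely} with period exactly $l(G)$ on each orbit — i.e.\ that the orbit really has $l(G)$ distinct objects and the induced cyclic $\Z/l(G)$-action on Hom-spaces is the standard one — so that the orbit category has the expected size; this follows from the definition of $l(G)$ and the remarks preceding Remark \ref{remark 2.7}, but needs to be stated cleanly.
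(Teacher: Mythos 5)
Your proposal is correct and follows essentially the same route as the paper: Buchweitz's equivalence $\mathbb{D}_{\text{sg}}(\La)\simeq \underline{\rm{Gprj}}\mbox{-}\La$, semisimplicity of the stable category from Proposition \ref{Omega-algebra}, the observation that algebraic closedness of $k$ forces $\underline{\rm{End}}_{\La}(\underline{G})\simeq k$, and the block decomposition by the classes $[G]\in\CC(\La)$ matched against the orbit categories. The only difference is that the paper outsources the final assembly to \cite[Theorem 2.5(b)]{Ka}, whereas you spell out those details (in particular the reduction, via uniqueness of the split triangulated structure, to an additive equivalence intertwining the translations).
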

\begin{proof}
	By the Buchweitz's equivalence over Gorenstein algebras, $\mathcal{D}_{sg}(\La)\simeq \underline{\rm{Gprj}}\mbox{-} \La$, so it is enough to describe $\underline{\rm{Gprj}}\mbox{-} \La$. Due to the lines before Remark \ref{remark 2.7} we see than any indecomposable non-projective in $\rm{Gprj}\mbox{-} \La$ is isomorphic to an module in some equivalence class $[G]$. For any equivalence class $[G] \in \mathcal{C}(\La)$, we have   $\sum^{l(G)}(G')\simeq G'$ in $\underline{\rm{Gprj}}\mbox{-} \La$ for any $G' \in [G].$ Recall that the suspension functor $\sum$ on $\underline{\rm{Gprj}}\mbox{-} \La$ is the quasi-inverse of the syzygy $\Omega_{\La}$. On the other hand,  by the  proof of Proposition \ref{Omega-algebra}, we  deduce that any morphism between two objects in $\underline{\rm{Gprj}}\mbox{-} \La$ are either split monomorphism or split epimorphism. Hence for every indecomposable modules $\underline{X}$ and $\underline{Y}$ in $\underline{\rm{Gprj}} \mbox{-} \La$, $\underline{\Hom}_{\La}(\underline{X}, \underline{Y})=0$ if $\underline{X}\ncong \underline{Y}$, and each non-zero morphism $\underline{f}$ in $\underline{\Hom}_{\La}(\underline{X}, \underline{Y})$ is an isomorphism if $\underline{X} \cong \underline{Y}.$ In the latter case, $\underline{\Hom}_{\La}(\underline{X}, \underline{Y})\simeq \underline{\Hom}_{\La}(\underline{X}, \underline{X})$ (or $\simeq \underline{\Hom}_{\La}(\underline{Y}, \underline{Y})$) as $k$-vector spaces.  But $\underline{\rm{End}}_{\La}(\underline{X})$ is a division algebra containing $k,$ as $k$ is algebraic closed,  this implies that $\underline{\rm{End}}_{\La}(\underline{G})\simeq k$ as $k$-vector space (or more as algebras). Thus,  $\underline{\Hom}_{\La}(\underline{X}, \underline{Y})\simeq k$. Now the rest of the proof goes through as for \cite[Theorem 2.5(b)]{Ka}.
\end{proof}
The above result is inspired by the similar equivalence in \cite{Ka} for gentle algebras and for quadratic monomial algebras in \cite{CSZ}.
\begin{remark}\label{Remark 2.8}
	\begin{itemize}
		\item [$(1)$] The algebras of finite global
dimension are triviality $\Omega_{\CG}$-algebras, since  $\rm{Gprj} \mbox{-} \La= \rm{proj} \mbox{-} \La$. Although, our results  in this paper are not interesting for them. 

 \item[$(2)$] Let $\La = kQ/I$ be a quadratic monomial algebra. In \cite[Theorem 5.7]{CSZ}, it is proved that there is a triangle equivalence
$$\underline{\rm{Gprj}} \mbox{-} \Lambda \simeq \CT_{d_1} \times \CT_{d_2} \times \cdots \times \CT_{d_m}$$
where $\CT_{d_n}=(\mmod k^n, \sigma^n)$ for some natural number $n,$ and auto-equivalence $\sigma_n: \mmod k^n \rt \mmod k^n.$ Thus by the above  equivalence a quadratic monomial algebra is an $\Omega_{\CG}$-algebra, as for each $m$, $\mmod (\mmod k^m)$ is semisimple. Especially, gentle algebras are $\Omega_{\CG}$-algebras.
\item[$(3)$] Let $\La$ be a simple gluing algebra of $A$ and $B$. See the  papers \cite{Lu1} and \cite{Lu2} for two different ways of defining gluing algebras. In these two mentioned papers, for both types of gluing, is proved that $\underline{\rm{Gprj}} \mbox{-} \Lambda\simeq\underline{\rm{Gprj}} \mbox{-} A \coprod \underline{\rm{Gprj}}\mbox{-} B$. Therefore, if $\mmod (\underline{\rm{Gprj}} \mbox{-} A)$ and $\mmod (\underline{\rm{Gprj}} \mbox{-} B)$ are semisimple, then $\mmod( \underline{\rm{Gprj}} \mbox{-} \Lambda)$, by definition,  so is. So simple gluing operation preserve $\Omega_{\CG}$-algebras. For example, cluster-tilted algebras of type $\mathbb{A}_n$ and endomorphism algebras of maximal rigid objects of cluster tube $\mathcal{C}_n$ can be built as simple gluing $\Omega_{\CG}$-algebras , see \cite[ Corollaries 3.21 and 3.22]{Lu2}. Hence this kind of the cluster-tilted algebras are $\Omega_{\CG}$-algebras. 
\item [$(4)$]By Theorem 3.24 in \cite{AHV}, if $\Lambda$ and $\Lambda'$ are derived equivalent, then $\underline{\rm{Gprj}} \mbox{-} \Lambda \simeq \underline{\rm{Gprj}} \mbox{-} \Lambda'.$ Therefore, the property of being semisimple of $\mmod (\underline{\rm{Gprj}} \mbox{-} \La)$ and $\mmod (\underline{\rm{Gprj}} \mbox{-} \La')$  is  preserved under the derived equivalence. Hence $\Omega_{\CG}$-algebras are closed under derived equivalences.
\item [$(5)$]  There are many examples of $\rm{CM}$-finite not-$\rm{\Omega_{\CG}}$-algebras. Let us first give a remark. It was shown in \cite[Theorem 10.7]{AR1}, for an arbitrary Artin algebra $\La$, $\mmod (\underline{\rm{mod}}\mbox{-}\La)$ is a semisimple abelian category if and only if $\La$ is Nakayama algebra  with loewy length at most $2.$ By using this fact, we see that the self-injective Nakayama algebras $k[x]/(x^n)$, $n >2$, are examples of $\rm{CM}$-finite not-$\rm{\Omega_{\CG}}$-algebras.
\end{itemize}
\end{remark}

\section{The monomorphism category over $\Omega_{\mathcal{G}}$-algebras}\label{Section 3}
In this section, all the almost split sequence in $\CS(\text{Gprj}\mbox{-}\La)$ over an $\Omega_{\CG}$-algebra will be computed. As an interesting application we will provide  a nice description of the singularity category of $T_2(\La)$

The following result was first appeared in a primary version of \cite{H} available on arXiv. This result was omitted in the published version of it.

\begin{proposition}
	Assume  the  subcategory of Gorenstein projective modules $\rm{Gprj}\mbox{-}\La$ is contravariantly finite in $\mmod \La$. Then $\CS(\rm{Gprj}\mbox{-}\La)$ is functorially finite in $\rm{H}(\La).$ In particular, $\CS(\rm{Gprj}\mbox{-}\La)$ has almost split sequences.
\end{proposition}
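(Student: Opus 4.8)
The plan is to show that $\CS(\rm{Gprj}\mbox{-}\La)$ is both contravariantly and covariantly finite in $\rm{H}(\La)$, and then invoke the general theory of almost split sequences for functorially finite subcategories of module categories (as in Auslander--Smal\o, \cite{AS}). Since $\rm{H}(\La)\simeq \mmod T_2(\La)$, the subcategory $\CS(\rm{Gprj}\mbox{-}\La)$ sits inside a genuine module category, so functorial finiteness there is exactly the hypothesis needed to produce almost split sequences. So the entire content is the approximation statement.

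First I would observe that $\CS(\rm{Gprj}\mbox{-}\La)$ is a resolving, in fact functorially finite-friendly, candidate: under the equivalence $\rm{H}(\La)\simeq\mmod T_2(\La)$ it is identified with $\rm{Gprj}\mbox{-}T_2(\La)$ (as recalled in the introduction), and an object $\left(\begin{smallmatrix} A\\ B\end{smallmatrix}\right)_f$ lies in it precisely when $f$ is a monomorphism with $A$, $B$ and $\Coker(f)$ all in $\rm{Gprj}\mbox{-}\La$. To build a right $\CS(\rm{Gprj}\mbox{-}\La)$-approximation of an arbitrary object $\left(\begin{smallmatrix} X\\ Y\end{smallmatrix}\right)_g$ of $\rm{H}(\La)$, the key is to use the right $\rm{Gprj}\mbox{-}\La$-approximations that exist by hypothesis. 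Concretely, take a right $\rm{Gprj}\mbox{-}\La$-approximation $b\colon G_Y\rt Y$; then pull back along $g$ (or rather take a right $\rm{Gprj}\mbox{-}\La$-approximation of the relevant pullback/fiber) to get a compatible map into $X$, and then pass to the monomorphism category by a standard "replace a map by a monomorphism into a mapping-cone-type object" manoeuvre — i.e. factor the constructed morphism of representations through an object with monic structure map and Gorenstein-projective cokernel. This kind of construction is exactly the one used by Ringel--Schmidmeier and Luo--Zhang to show $\CS(\CX)$ is functorially finite in $\rm{H}(\CX)$ when $\CX$ is; here one just needs that $\rm{Gprj}\mbox{-}\La$ is contravariantly finite and resolving (closed under kernels of epis and extensions), both of which hold. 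For the covariant side, either dualize using the duality $(-)^*\colon\rm{Gprj}\mbox{-}\La\rt\rm{Gprj}\mbox{-}\La^{\op}$ together with the fact (\cite{KS}) that a resolving contravariantly finite subcategory is automatically functorially finite, or argue directly that $\rm{H}(\La)$ has enough structure (it is a module category, hence has a duality onto $\mmod T_2(\La)^{\op}$) to transport contravariant finiteness of $\CS(\rm{Gprj}\mbox{-}\La)$ to covariant finiteness.

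I expect the main obstacle to be the explicit construction of the right approximation in the monomorphism category: given a morphism from an object of $\CS(\rm{Gprj}\mbox{-}\La)$ into $\left(\begin{smallmatrix} X\\ Y\end{smallmatrix}\right)_g$, one must factor it through a fixed object of $\CS(\rm{Gprj}\mbox{-}\La)$, and producing that fixed object requires combining a right $\rm{Gprj}\mbox{-}\La$-approximation of $Y$ with an approximation of (a suitable subobject of) $X$ in a way that keeps the structure map a monomorphism and the cokernel Gorenstein projective. The bookkeeping — checking that $\Coker$ lands in $\rm{Gprj}\mbox{-}\La$, which uses that $\rm{Gprj}\mbox{-}\La$ is closed under extensions and under kernels of epimorphisms — is where care is needed. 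Once the contravariant finiteness of $\CS(\rm{Gprj}\mbox{-}\La)$ in $\rm{H}(\La)$ is established, the "in particular" is immediate: a functorially finite subcategory of a module category that is closed under extensions has almost split sequences by \cite{AS}, and $\CS(\rm{Gprj}\mbox{-}\La)$ is extension-closed since it corresponds to $\rm{Gprj}\mbox{-}T_2(\La)$.
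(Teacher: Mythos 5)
Your overall architecture matches the paper's: both reduce to contravariant finiteness of $\CS(\rm{Gprj}\mbox{-}\La)$ in $\rm{H}(\La)$ using that it is resolving (it is identified with $\rm{Gprj}\mbox{-}T_2(\La)$), so that \cite[Corollary 0.3]{KS} upgrades contravariant to functorial finiteness, and then almost split sequences come from the Auslander--Smal\o{} theory. The gap is in the one step that carries all the content, namely the construction of the right approximation. Your plan --- approximate $Y$ by some $G_Y\rt Y$, pull back along $g$, approximate the pullback, and then ``pass to the monomorphism category by a standard manoeuvre'' --- is essentially the construction that works for $\rm{H}(\rm{Gprj}\mbox{-}\La)$ (and is used for exactly that purpose in Section 4 of the paper), but for $\CS(\rm{Gprj}\mbox{-}\La)$ it leaves the hard points unaddressed: the structure map of the object built this way need not be a monomorphism, and even after forcing it to be one there is no control on its cokernel, which must also land in $\rm{Gprj}\mbox{-}\La$. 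You flag this as ``where care is needed'' but do not supply the idea that resolves it, and you also never verify that the candidate map has the factorization property of an approximation.

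The paper's route differs in exactly this respect. It first invokes \cite[Theorem 2.5]{RS} to reduce to approximating objects $\left(\begin{smallmatrix}M_1\\ M_2\end{smallmatrix}\right)_f$ already in $\CS(\La)$, i.e.\ with $f$ monic; it then takes a minimal right $\rm{Gprj}\mbox{-}\La$-approximation of $\text{Cok}(f)$ (not of $M_2$), whose kernel lies in $\rm{Gprj}\mbox{-}\La^{\perp}$ by Wakamatsu's Lemma, forms two pullbacks and applies the Snake Lemma to produce a short exact sequence $0\rt \left(\begin{smallmatrix}K_2\\ K_3\end{smallmatrix}\right)_v\rt \left(\begin{smallmatrix}G_3\\ G_2\end{smallmatrix}\right)_w\rt \left(\begin{smallmatrix}M_1\\ M_2\end{smallmatrix}\right)_f\rt 0$ with middle term in $\CS(\rm{Gprj}\mbox{-}\La)$ and both kernel components in $\rm{Gprj}\mbox{-}\La^{\perp}$. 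The approximation property is then \emph{verified}, not assumed: one checks that $\Ext^1_{\rm{H}(\La)}\bigl(-,\left(\begin{smallmatrix}K_2\\ K_3\end{smallmatrix}\right)_v\bigr)$ vanishes on $\CS(\rm{Gprj}\mbox{-}\La)$ using the adjunctions between $\rm{H}(\La)$ and $\mmod\La$. Approximating the cokernel rather than the codomain, and tracking the $\rm{Gprj}\mbox{-}\La^{\perp}$ condition on the kernels so the $\Ext$-vanishing argument applies, are the two ideas missing from your plan; without them the bookkeeping you defer does not go through.
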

\begin{proof}
	Since $\CS(\rm{Gprj}\mbox{-}\La)$ is indeed the subcategory of Gorenstein projective modules in $\rm{H}(\La)$ and so a resolving subcategory, then by \cite[Corollary 0.3 ]{KS} it is enough to show that $\CS(\rm{Gprj}\mbox{-}\La)$ is contravariantly finite in $\rm{H}(\La).$ We know from \cite[Theorem 2.5]{RS} that $\CS(\La)$ is contravariantly finite in $\rm{H}(\La)$, hence by making use of this fact,   it is enough to show 
	that any object in $\CS(\La)$ has a right $\CS(\rm{Gprj}\mbox{-}\La)$-approximation. Take an arbitrary object $\left(\begin{smallmatrix}M_1\\ M_2\end{smallmatrix}\right)_f$  in $\CS(\La)$. Hence $f$ is a monomorphism. Let $G_1 \st{g}\rt \text{Cok}(f)\rt 0$ be a minimal right $\rm{Gprj}\mbox{-}\La$-approximation of $\text{Cok}(f)$ in $\mmod \La,$ which it exists by our assumption. Further, set $K_1:=\Ker(g)$, since $g$ is minimal then by Wakamutsu's Lemma,   $K_1 \in \rm{Gprj}\mbox{-}\La^{\perp}$, i.e., $\Ext^1(G, K_1)=0$ for any $G$ in $\rm{Gprj}\mbox{-}\La$. Consider the following pull-back diagram
	$$\xymatrix{&   & 0\ar[d] & 0\ar[d] & &\\
		&   & M_1 \ar[d]^h
		\ar@{=}[r] & M_1 \ar[d]^{f} \ar[r] & 0\\
		0 \ar[r] & K_1\ar@{=}[d] \ar[r] & U
		\ar[d]^l \ar[r]^{d} & M_2 \ar[d]^{\pi} \ar[r] & 0\\
		0 \ar[r] & K_1 \ar[d] \ar[r]^{\mu_1} & G_1
		\ar[d] \ar[r]^{g} & \text{Cok}(f) \ar[d] \ar[r] & 0\\
		& 0  & 0  & 0 & }	$$
	Let $G_2 \st{t}\rt U \rt 0$ be a minimal right $\rm{Gprj}\mbox{-}\La$-approximation. Again by applying Wakamutsu's Lemma, $K_2:= \text{Ker}(t)$ belongs to $\rm{Gprj}\mbox{-}\La^{\perp}$. Now consider the following pull-back diagram	
	$$\xymatrix{&   & 0 \ar[d] & 0 \ar[d]& &\\
		0 \ar[r] & K_2 \ar@{=}[d] \ar[r] & K_3 \ar[d]
		\ar[r] &K_1\ar[d] \ar[r] & 0\\
		0 \ar[r] & K_2  \ar[r] & G_2
		\ar[d]^{dt} \ar[r]^{t} & U\ar[d]^d \ar[r] & 0\\
		&  & M_2
		\ar[d] \ar@{=}[r] & M_2 \ar[d] & \\
		&  & 0  & 0 & }
	$$
	Since both $K_1, K_2$ are in $\rm{Gprj}\mbox{-}\La^{\perp}$, so the first row in above digram follows $K_3$ so is.  Finally, applying the Snake lemma for the following commutative diagram
	$$\xymatrix{
		0 \ar[r] &0 \ar[d] \ar[r] & G_2 \ar[d]^t
		\ar@{=}[r] &G_2 \ar[d]^{lt}\ar[r] & 0&\\
		0 \ar[r] & M_1 \ar[r]^{h} & U
		\ar[r]^l & G_1\ar[r] &0.& \\ 	} 	 $$
	yields the following short exact sequence 
	$$0 \rt K_2 \rt G_3\rt M_1\rt 0, $$
	where  $G_3:= \text{Ker}(lt)$, 	that is a Gorenstein projective module since it is a kernel of an epimorphism in $\rm{Gprj}\mbox{-}\La$.
	Putting together the maps   obtained  in the above,  the following commutative diagram can be constructed
	$$\xymatrix{& 0 \ar[d] & 0 \ar[d] & 0 \ar[d]&(*) &\\
		0 \ar[r] & K_2 \ar[d]^v \ar[r] & G_3 \ar[d]^w
		\ar[r]^p & M_1 \ar[d]^{f} \ar[r] & 0\\
		0 \ar[r] & K_3\ar[d] \ar[r] & G_2
		\ar[d]^{lt} \ar[r]^{dt} & M_2 \ar[d]^{\pi} \ar[r] & 0\\
		0 \ar[r] & K_1 \ar[d] \ar[r] & G_1
		\ar[d] \ar[r]^{g} & \rm{Cok}(f) \ar[d] \ar[r] & 0\\
		& 0  & 0  & 0 & }$$
	
	From the above digram, we  obtain the following short exact sequence in $\rm{H}(\La)$
	
	$$\xymatrix@1{\la: \ \  0\ar[r] & {\left(\begin{smallmatrix}K_2\\ K_3\end{smallmatrix}\right)_v}
		\ar[rr]
		& & {\left(\begin{smallmatrix}G_3\\ G_2 \end{smallmatrix}\right)_w}\ar[rr]^{\left(\begin{smallmatrix}p\\ dt \end{smallmatrix}\right)}& &
		{\left(\begin{smallmatrix}M_1\\ M_2\end{smallmatrix}\right)_f}\ar[r]& 0 } \ \    $$	
	such that the middle term lies in $\CS(\rm{Gprj}\mbox{-}\La)$ and $K_2, K_3$ are in $\rm{Gprj}\mbox{-}\La^{\perp}$. Consider an arbitrary object $\left(\begin{smallmatrix}G\\ G'\end{smallmatrix}\right)_s$  in  $\CS(\rm{Gprj}\mbox{-}\La)$. In view of the following short exact sequence
$$\xymatrix@1{ 0\ar[r] & {\left(\begin{smallmatrix}G\\ \text{Im}(s)\end{smallmatrix}\right)_j}
	\ar[rr]^{\left(\begin{smallmatrix}1\\ i \end{smallmatrix}\right)}
	& & {\left(\begin{smallmatrix}G\\ G' \end{smallmatrix}\right)_s}\ar[rr]^{\left(\begin{smallmatrix}0\\ \pi' \end{smallmatrix}\right)}& &
	{\left(\begin{smallmatrix}0\\ \text{Cok}(s)\end{smallmatrix}\right)_0}\ar[r]& 0 } \ \    $$	
	\noindent	
	where $ij=s$ is an epi-mono factorization of $s$ and $\pi'$ the canonical epimorphism, consequently $j$ is an isomorphism. The sequence   implies that $\Ext^1_{\rm{H}(\La)}(\left(\begin{smallmatrix}G\\ G' \end{smallmatrix}\right)_s, \left(\begin{smallmatrix}K_2\\ K_3 \end{smallmatrix}\right)_v)=0$ for any object $\left(\begin{smallmatrix}G\\ G' \end{smallmatrix}\right)_s$ in $\CS(\rm{Gprj}\mbox{-}\La)$. Indeed, by the known adjoint pairs between $\rm{H}(\La)$ and $\mmod \La$ along  with the vanishing of $\Ext$ for $K_2, K_3$ , we have $\Ext^1_{\rm{H}(\La)}(\left(\begin{smallmatrix}G\\ \text{Im}(s) \end{smallmatrix}\right)_j, \left(\begin{smallmatrix}K_2\\ K_3 \end{smallmatrix}\right)_v )=\Ext^1_{\La}(G, K_2)=0$ and $\Ext^1_{\rm{H}(\La)}(\left(\begin{smallmatrix}0\\ \text{Cok}(s) \end{smallmatrix}\right)_0,\left(\begin{smallmatrix}K_2\\ K_3 \end{smallmatrix}\right)_v )=\Ext^1_{\La}(\text{Cok}(s), K_3)=0$, and consequently, the desired vanishing of $\Ext$ in $\rm{H}(\La)$. This follows the epimorphism included in the sequence $(\lambda)$ is a right $\CS(\rm{Gprj}\mbox{-}\La)$-approximation of $\left(\begin{smallmatrix}M_1\\ M_2\end{smallmatrix}\right)_f$ in $\rm{H}(\La)$. Now the proof is complete.
\end{proof}
We need the following lemmas.
\begin{lemma}\label{Lemma 4.2}
	\begin{itemize}
		\item [$(1)$]Let $P$ be an indecomposable projective module in $\mmod \La$ with radical $\text{rad}P$.The objects $\left(\begin{smallmatrix}
		0 \\ P
		\end{smallmatrix}\right)_0$ and $\left(\begin{smallmatrix}
		P \\ P
		\end{smallmatrix}\right)_1$  are indecomposable projective  objects in $\CS(\La)$ (and more in $\text{H}(\La)$). Each indecomposable (relatively) projective object arises in this way. Further, the following inclusions play as sink maps in $\CS(\La)$ (and more in $\text{H}(\La)$)
		$$\left(\begin{smallmatrix}
		0 \\ i
		\end{smallmatrix}\right):\left(\begin{smallmatrix}
		0 \\ \text{rad} P
		\end{smallmatrix}\right)_0\rt \left(\begin{smallmatrix}
		0\\P
		\end{smallmatrix} \right)_0 \ \  \text{and}  \ \ \  \left(\begin{smallmatrix}
		i \\ 1
		\end{smallmatrix}\right):\left(\begin{smallmatrix}
		\text{rad} P\\ P
		\end{smallmatrix}\right)_i\rt \left(\begin{smallmatrix}
		P\\P
		\end{smallmatrix} \right)_1,
		$$
		where $i:\text{rad}P\hookrightarrow P $ the canonical inclusion, respectively.
		\item [$(2)$] Let $I$ be an indecomposable injective module in $\mmod \La$ with socle  $\text{soc}I$. Then the objects $\left(\begin{smallmatrix}
		I \\ I
		\end{smallmatrix}\right)_1$ and $\left(\begin{smallmatrix}
		0 \\ I
		\end{smallmatrix}\right)_0$  are   indecomposable injective objects in $\CS(\La)$. Each indecomposable (relatively) injective object in  $\CS(\La)$ arises in this way. Further,  the following inclusions play as source maps in $\CS(\La)$ 
		$$\left(\begin{smallmatrix}
		p \\ p
		\end{smallmatrix}\right):\left(\begin{smallmatrix}
		I \\ I
		\end{smallmatrix}\right)_1\rt \left(\begin{smallmatrix}
		I/\text{soc} I\\ I/\text{soc} I
		\end{smallmatrix} \right)_1 \ \  \text{and}  \ \ \ \left(\begin{smallmatrix}
		0 \\ 1
		\end{smallmatrix}\right): \left(\begin{smallmatrix}
		0 \\ I
		\end{smallmatrix}\right)\rt \left(\begin{smallmatrix}
		\text{soc} I\\I
		\end{smallmatrix} \right)_j,
		$$
		where $p:I\rt I/\text{soc} I$ the canonical quotient and $j:\text{soc}I\hookrightarrow I$ the inclusion, 	respectively. 
	\end{itemize}
\end{lemma}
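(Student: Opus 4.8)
The plan is to transport everything through the equivalence $\rm{H}(\La)\simeq\mmod T_2(\La)$ and then to read off the two assertions from the standard Auslander--Reiten formalism: the sink map of an indecomposable projective is the inclusion of its radical, and the source map of an indecomposable relative injective of $\CS(\La)$ is the appropriate ``socle'' quotient. The substantive work is, first, to identify which objects are the (relative) projectives and injectives --- which I would do by exhibiting adjoint functors --- and, second, for $\CS(\La)$, to carry these facts down from the ambient abelian category $\rm{H}(\La)$ to the subcategory.

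For (1): the top functor $\left(\begin{smallmatrix}A\\B\end{smallmatrix}\right)_f\mapsto A$ and the bottom functor $\left(\begin{smallmatrix}A\\B\end{smallmatrix}\right)_f\mapsto B$ on $\rm{H}(\La)$ are exact, and one checks directly that $M\mapsto\left(\begin{smallmatrix}M\\M\end{smallmatrix}\right)_1$ is left adjoint to the former while $M\mapsto\left(\begin{smallmatrix}0\\M\end{smallmatrix}\right)_0$ is left adjoint to the latter; hence both preserve projectives, so $\left(\begin{smallmatrix}P\\P\end{smallmatrix}\right)_1$ and $\left(\begin{smallmatrix}0\\P\end{smallmatrix}\right)_0$ are projective in $\rm{H}(\La)$ for $P$ projective, and since they lie in the resolving subcategory $\CS(\La)$ they are also exactly the relatively projective objects there. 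The same adjunctions identify the endomorphism ring of each of $\left(\begin{smallmatrix}P\\P\end{smallmatrix}\right)_1$ and $\left(\begin{smallmatrix}0\\P\end{smallmatrix}\right)_0$ with $\End_\La(P)$, which is local for $P$ indecomposable, so these objects are indecomposable; and every $\left(\begin{smallmatrix}A\\B\end{smallmatrix}\right)_f$ is an epimorphic image of $\left(\begin{smallmatrix}P_A\\P_A\end{smallmatrix}\right)_1\oplus\left(\begin{smallmatrix}0\\P'\end{smallmatrix}\right)_0$, where $P_A\twoheadrightarrow A$ and $P'\twoheadrightarrow\text{Cok}(f)$ are projective covers and the second is lifted through $B\twoheadrightarrow\text{Cok}(f)$, so by Krull--Schmidt every indecomposable projective of $\rm{H}(\La)$, and every indecomposable relative projective of $\CS(\La)$, is of one of the two stated forms. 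Finally, computing $\Hom$ from these objects into the two types of simple objects of $\rm{H}(\La)$ via the adjunctions shows each has a unique simple quotient, so that the radical of $\left(\begin{smallmatrix}P\\P\end{smallmatrix}\right)_1$ is $\left(\begin{smallmatrix}\text{rad}P\\P\end{smallmatrix}\right)_i$ and the radical of $\left(\begin{smallmatrix}0\\P\end{smallmatrix}\right)_0$ is $\left(\begin{smallmatrix}0\\\text{rad}P\end{smallmatrix}\right)_0$; the inclusion of the radical is the sink map in $\rm{H}(\La)$ by the general theory (\cite[Theorem 2.2.2]{Kr}), it is monic hence right minimal, its source again lies in $\CS(\La)$, and since an epimorphism onto a projective splits it remains right almost split inside $\CS(\La)$.

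For (2): here I argue inside $\CS(\La)$, where the bottom functor and the cokernel functor $\left(\begin{smallmatrix}A\\B\end{smallmatrix}\right)_f\mapsto\text{Cok}(f)$ are both exact --- the latter by the snake lemma, using precisely that the structure maps of objects of $\CS(\La)$ are monic --- with right adjoints $M\mapsto\left(\begin{smallmatrix}M\\M\end{smallmatrix}\right)_1$ and $M\mapsto\left(\begin{smallmatrix}0\\M\end{smallmatrix}\right)_0$ respectively; hence $\left(\begin{smallmatrix}I\\I\end{smallmatrix}\right)_1$ and $\left(\begin{smallmatrix}0\\I\end{smallmatrix}\right)_0$ are relatively injective in $\CS(\La)$ for $I$ injective, and indecomposable when $I$ is, again by the endomorphism-ring computation. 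To see these exhaust the indecomposable relative injectives, given $\left(\begin{smallmatrix}A\\B\end{smallmatrix}\right)_f$ with $C=\text{Cok}(f)$ I embed $A$ into an injective $I_A$, extend along $f$ to a map $g\colon B\to I_A$, embed $C$ into an injective $I_C$, and check that the resulting morphism $\left(\begin{smallmatrix}A\\B\end{smallmatrix}\right)_f\to\left(\begin{smallmatrix}I_A\\I_A\end{smallmatrix}\right)_1\oplus\left(\begin{smallmatrix}0\\I_C\end{smallmatrix}\right)_0$, whose bottom component $B\to I_A\oplus I_C$ has components $g$ and $B\twoheadrightarrow C\hookrightarrow I_C$, is a monomorphism with cokernel again in $\CS(\La)$; thus every relative injective is a summand of such a sum. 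For the source maps I parametrize maps out of the two injectives via the adjunctions: a map out of $\left(\begin{smallmatrix}I\\I\end{smallmatrix}\right)_1$ fails to be a split monomorphism precisely when the corresponding map $I\to C$ is not injective, hence kills $\text{soc}I$ and factors through $p\colon I\to I/\text{soc}I$, so $\left(\begin{smallmatrix}p\\p\end{smallmatrix}\right)$ is left almost split, and it is left minimal because $p$ is epic. For $\left(\begin{smallmatrix}0\\I\end{smallmatrix}\right)_0$, the key point is that a map $\left(\begin{smallmatrix}0\\I\end{smallmatrix}\right)_0\to\left(\begin{smallmatrix}C\\D\end{smallmatrix}\right)_s$ corresponding to $\beta\colon I\to D$ is a split monomorphism in $\CS(\La)$ if and only if $\beta$ is injective and $\beta(I)\cap\text{Im}(s)=0$; so when it is not a split mono, either $\beta$ kills $\text{soc}I$ or $\beta(I)$ meets $\text{Im}(s)$ and therefore, since $\text{soc}\beta(I)$ is simple and essential, $\beta(\text{soc}I)\subseteq\text{Im}(s)$ --- in either case $\beta|_{\text{soc}I}$ factors through $s$, which is exactly the condition for the map to factor through $\left(\begin{smallmatrix}0\\1\end{smallmatrix}\right)\colon\left(\begin{smallmatrix}0\\I\end{smallmatrix}\right)_0\to\left(\begin{smallmatrix}\text{soc}I\\I\end{smallmatrix}\right)_j$; left minimality is immediate.

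The step I expect to be the main obstacle is this last computation in (2): in contrast to $\rm{H}(\La)$, where ``not a split monomorphism'' just means ``not a monomorphism'', inside $\CS(\La)$ one must first obtain the correct description of split monomorphisms out of $\left(\begin{smallmatrix}0\\I\end{smallmatrix}\right)_0$ --- which brings in the extra condition $\beta(I)\cap\text{Im}(s)=0$ --- and then use essentiality of the simple socle of $I$; relatedly, showing that the two displayed families exhaust the relative injectives needs the not-entirely-obvious embedding into $\left(\begin{smallmatrix}I_A\\I_A\end{smallmatrix}\right)_1\oplus\left(\begin{smallmatrix}0\\I_C\end{smallmatrix}\right)_0$, since the naive attempt $\left(\begin{smallmatrix}A\\B\end{smallmatrix}\right)_f\hookrightarrow\left(\begin{smallmatrix}I_B\\I_B\end{smallmatrix}\right)_1$ has cokernel that need not lie in $\CS(\La)$.
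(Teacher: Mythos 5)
Your proof is correct, but it is worth knowing that the paper does not prove this lemma at all: its entire ``proof'' is the citation \cite[Proposition 1.4]{RS}, so you have in effect reconstructed Ringel--Schmidmeier's argument from scratch. Your reconstruction follows the standard route and all the delicate points check out: the adjunctions $\Hom_{\rm{H}(\La)}(\left(\begin{smallmatrix}M\\M\end{smallmatrix}\right)_1,\left(\begin{smallmatrix}A\\B\end{smallmatrix}\right)_f)\cong\Hom_\La(M,A)$ and $\Hom_{\rm{H}(\La)}(\left(\begin{smallmatrix}A\\B\end{smallmatrix}\right)_f,\left(\begin{smallmatrix}0\\M\end{smallmatrix}\right)_0)\cong\Hom_\La(\text{Cok}(f),M)$ etc.\ are the right tools; the criterion that $(0,\beta)\colon\left(\begin{smallmatrix}0\\I\end{smallmatrix}\right)_0\to\left(\begin{smallmatrix}C\\D\end{smallmatrix}\right)_s$ splits iff $\beta$ is monic and $\beta(I)\cap\text{Im}(s)=0$ is exactly right (a retraction is a $\delta\colon D\to I$ with $\delta\beta=1$ and $\delta s=0$), and your use of the simple essential socle of an indecomposable injective to force $\beta(\text{soc}I)\subseteq\text{Im}(s)$ is the key step, as you correctly anticipated; and the embedding of $\left(\begin{smallmatrix}A\\B\end{smallmatrix}\right)_f$ into $\left(\begin{smallmatrix}I_A\\I_A\end{smallmatrix}\right)_1\oplus\left(\begin{smallmatrix}0\\I_C\end{smallmatrix}\right)_0$ does have cokernel in $\CS(\La)$, which the naive embedding misses. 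The only place where you state slightly more than you argue is the parenthetical that the honest projectives of $\rm{H}(\La)$ lying in $\CS(\La)$ are ``exactly'' the relative projectives of $\CS(\La)$: to rule out additional Ext-projectives one should note that every object of $\CS(\La)$ admits an epimorphism from a projective of $\rm{H}(\La)$ with kernel again in $\CS(\La)$ (which your cover $\left(\begin{smallmatrix}P_A\\P_A\end{smallmatrix}\right)_1\oplus\left(\begin{smallmatrix}0\\P'\end{smallmatrix}\right)_0$ in fact supplies), so that an Ext-projective splits off it. With that one sentence added, your argument is a complete substitute for the external reference.
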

\begin{proof}
See \cite[Proposition 1.4]{RS}.
\end{proof}

\begin{lemma}\label{Lemma 4.3}
	\begin{itemize}
		
		\item [$(1)$]Let $P$ be an indecomposable projective module in $\mmod \La$ with radical $\text{rad}P$.The objects $\left(\begin{smallmatrix}
		0 \\ P
		\end{smallmatrix}\right)$ and $\left(\begin{smallmatrix}
		P\\ P
		\end{smallmatrix}\right)_1$   are indecomposable projective-injective objects. Each indecomposable injective object arises in this way. 
		\item [$(2)$] The following composition map is sink in $\CS(\text{Gprj}\mbox{-}\La)$
			$$\left(\begin{smallmatrix}
		0\\ i
		\end{smallmatrix}\right)\circ \left(\begin{smallmatrix}
		0\\ f
		\end{smallmatrix}\right):\left(\begin{smallmatrix}
		0\\ G
		\end{smallmatrix}\right)_0\rt \left(\begin{smallmatrix}
		0\\ \text{rad}P
		\end{smallmatrix}\right)_0\rt \left(\begin{smallmatrix}
		0\\P
		\end{smallmatrix} \right)_0 $$
		where
		 $f:G\rt \text{rad}P$ a  minimal right $\rm{Gprj}\mbox{-}\La$-approximation and $i:\text{rad}P\rt P$ the canonical inclusion.		
			\end{itemize}	
\end{lemma}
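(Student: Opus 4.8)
The plan is to establish part~$(2)$ by combining Lemma~\ref{Lemma 4.2}$(1)$ with Lemma~\ref{Radical}$(i)$, since the asserted sink map is essentially obtained by pulling the known sink map for $\left(\begin{smallmatrix}0\\P\end{smallmatrix}\right)_0$ in $\CS(\La)$ back along the minimal right $\rm{Gprj}\mbox{-}\La$-approximation of $\text{rad}P$. First I would recall that by Lemma~\ref{Lemma 4.3}$(1)$ the object $\left(\begin{smallmatrix}0\\P\end{smallmatrix}\right)_0$ is projective in $\CS(\rm{Gprj}\mbox{-}\La)$, so it admits a sink map (projective cover of its radical in the category), and the task is to identify that sink map concretely. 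The key observation is that a morphism $\left(\begin{smallmatrix}A\\B\end{smallmatrix}\right)_s \rt \left(\begin{smallmatrix}0\\P\end{smallmatrix}\right)_0$ in $\CS(\rm{Gprj}\mbox{-}\La)$ is simply a morphism $B\rt P$ in $\rm{Gprj}\mbox{-}\La$ (the top component is forced to be zero), so radical morphisms into $\left(\begin{smallmatrix}0\\P\end{smallmatrix}\right)_0$ correspond to non-retraction morphisms $B\rt P$ with $B\in\rm{Gprj}\mbox{-}\La$; such a morphism has image contained in $\text{rad}P$ because $P$ is indecomposable, hence factors through $i:\text{rad}P\hookrightarrow P$, and then through $f:G\rt\text{rad}P$ since $f$ is a right $\rm{Gprj}\mbox{-}\La$-approximation.

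Then I would organize the argument in three short steps. \textbf{Step 1:} Show $\left(\begin{smallmatrix}0\\i\end{smallmatrix}\right)\circ\left(\begin{smallmatrix}0\\f\end{smallmatrix}\right)=\left(\begin{smallmatrix}0\\i\circ f\end{smallmatrix}\right):\left(\begin{smallmatrix}0\\G\end{smallmatrix}\right)_0\rt\left(\begin{smallmatrix}0\\P\end{smallmatrix}\right)_0$ is a well-defined morphism in $\CS(\rm{Gprj}\mbox{-}\La)$ which is not a retraction (the composite $G\rt P$ is not a split epi since its image lies in $\text{rad}P$). \textbf{Step 2:} Show it is right almost split, i.e.\ every non-retraction $\left(\begin{smallmatrix}A\\B\end{smallmatrix}\right)_s\rt\left(\begin{smallmatrix}0\\P\end{smallmatrix}\right)_0$ factors through it: as noted, such a map is a morphism $h:B\rt P$ which is not a split epimorphism, hence (by indecomposability of $P$) has $\text{Im}(h)\subseteq\text{rad}P$, so $h=i\circ h'$ for some $h':B\rt\text{rad}P$; since $B\in\rm{Gprj}\mbox{-}\La$ and $f$ is a right $\rm{Gprj}\mbox{-}\La$-approximation, $h'=f\circ h''$ for some $h'':B\rt G$, and then $\left(\begin{smallmatrix}0\\h''\end{smallmatrix}\right):\left(\begin{smallmatrix}A\\B\end{smallmatrix}\right)_s\rt\left(\begin{smallmatrix}0\\G\end{smallmatrix}\right)_0$ (again the top component being forced to zero, and the square commuting trivially since the bottom rows land in $P$ via $0$ on top) gives the required factorization. \textbf{Step 3:} Show minimality (right minimality of the sink map): since $f$ is chosen to be a \emph{minimal} right $\rm{Gprj}\mbox{-}\La$-approximation and $i$ is a monomorphism, the composite $i\circ f$ is right minimal, and this transfers to right minimality of $\left(\begin{smallmatrix}0\\i\circ f\end{smallmatrix}\right)$ in $\CS(\rm{Gprj}\mbox{-}\La)$ because an endomorphism of $\left(\begin{smallmatrix}0\\G\end{smallmatrix}\right)_0$ is just an endomorphism of $G$ and the compatibility with the composite is exactly the defining equation for right minimality of $i\circ f$ in $\rm{Gprj}\mbox{-}\La$.

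The main obstacle I expect is not any single step but rather carefully checking that morphisms in $\CS(\rm{Gprj}\mbox{-}\La)$ with target $\left(\begin{smallmatrix}0\\P\end{smallmatrix}\right)_0$ (resp.\ source $\left(\begin{smallmatrix}0\\G\end{smallmatrix}\right)_0$) are faithfully described by their lower components, and that ``non-retraction'' in $\CS(\rm{Gprj}\mbox{-}\La)$ corresponds exactly to ``image in $\text{rad}P$'' on the lower component; once this dictionary is in place, the factorization property is immediate from the approximation property of $f$, and minimality is a direct consequence of the minimality built into $f$. One should also note that Lemma~\ref{Radical}$(i)$ already shows $i\circ f$ is a minimal right almost split morphism in $\rm{Gprj}\mbox{-}\La$, so the only genuinely new content here is the (routine but necessary) passage from $\rm{Gprj}\mbox{-}\La$ to $\CS(\rm{Gprj}\mbox{-}\La)$ via the fully faithful embedding $G\mapsto\left(\begin{smallmatrix}0\\G\end{smallmatrix}\right)_0$, under which almost split and minimality properties are preserved because of the precise correspondence of Hom-spaces described above.
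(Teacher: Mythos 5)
Your overall strategy for part $(2)$ --- reduce everything to the bottom component, use indecomposability of $P$ to force the image of a non-retraction into $\text{rad}P$, and then invoke the approximation property of $f$ --- is the same skeleton as the paper's argument, and your Steps 1 and 3 are correct. But Step 2 has a genuine gap at exactly the point you dismiss as trivial. Given a non-retraction $\left(\begin{smallmatrix} 0\\ h\end{smallmatrix}\right):\left(\begin{smallmatrix} A\\ B\end{smallmatrix}\right)_s\rt \left(\begin{smallmatrix} 0\\ P\end{smallmatrix}\right)_0$, note first that the commutativity of the defining square forces $h\circ s=0$; you then correctly produce $h''\colon B\rt G$ with $i\circ f\circ h''=h$. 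However, for $\left(\begin{smallmatrix} 0\\ h''\end{smallmatrix}\right)$ to be a morphism $\left(\begin{smallmatrix} A\\ B\end{smallmatrix}\right)_s\rt \left(\begin{smallmatrix} 0\\ G\end{smallmatrix}\right)_0$ of the morphism category one needs $h''\circ s=0$, and from $h\circ s=0$ and $i$ mono you only get $f\circ h''\circ s=0$. Since $f$ is not injective in general, $h''\circ s$ is a possibly nonzero map $A\rt K:=\Ker(f)$ ($\Hom_{\La}(A,K)$ need not vanish), so the factoring map you write down need not exist in $\CS(\text{Gprj}\mbox{-}\La)$; ``the square commuting trivially'' is not a justification.

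This is precisely where the paper does real work. By Wakamatsu's Lemma $K\in\text{Gprj}\mbox{-}\La^{\perp}$, and since $\text{Cok}(s)\in\text{Gprj}\mbox{-}\La$ (this is where the definition of $\CS(\text{Gprj}\mbox{-}\La)$, as opposed to $\text{H}(\text{Gprj}\mbox{-}\La)$, is essential) one gets $\Ext^1_{\La}(\text{Cok}(s),K)=0$; hence the map $h''\circ s\colon A\rt K$ extends along the monomorphism $s$ to some $u\colon B\rt K$, and replacing $h''$ by $h''-ju$ (with $j\colon K\hookrightarrow G$ the inclusion) yields a lift of $h'$ through $f$ that also kills $\text{Im}(s)$. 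Equivalently, the paper proves $\Ext^1_{\text{H}(\La)}\bigl(\left(\begin{smallmatrix} A\\ B\end{smallmatrix}\right)_s,\left(\begin{smallmatrix} 0\\ K\end{smallmatrix}\right)_0\bigr)=0$, i.e.\ that $\left(\begin{smallmatrix} 0\\ f\end{smallmatrix}\right)$ is a right $\CS(\text{Gprj}\mbox{-}\La)$-approximation \emph{in the morphism category}, not merely on underlying modules; this $\Ext$-vanishing is the missing ingredient you must supply. (You also address only part $(2)$; since the paper handles part $(1)$ by citation, that omission is minor.)
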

\begin{proof}
$(1)$ See \cite[Proposition 1]{HM}. Set $K:=\text{Ker}(f)$. By Wakamutsu's Lemma we have $K \in \text{Gprj}\mbox{-}\La^{\perp}$. For any $\left(\begin{smallmatrix}
G\\ G'
\end{smallmatrix}\right)_e$ in $\CS(\text{Gprj}\mbox{-}\La)$, by applying $\Ext^1_{\text{H}(\La)}(-, \left(\begin{smallmatrix}
0\\ K
\end{smallmatrix}\right)_0 )$ on the following short exact sequence

$$0 \rt \left(\begin{smallmatrix}
G\\ \text{Im}(e)
\end{smallmatrix}\right)_g\rt \left(\begin{smallmatrix}
G\\ G'
\end{smallmatrix}\right)_e\rt \left(\begin{smallmatrix}
0\\ \text{Cok}(e)
\end{smallmatrix}\right)_0\rt 0$$
where, by construction, $g$ is an isomorphism, we obtain $\Ext^1_{\text{H}(\La)}(\left(\begin{smallmatrix}
G\\ G'
\end{smallmatrix}\right)_e, \left(\begin{smallmatrix}
0\\ K
\end{smallmatrix}\right)_0 )=0$. Hence $\left(\begin{smallmatrix}
0\\ K
\end{smallmatrix}\right)_0 \in \CS(\text{Gprj}\mbox{-}\La)^{\perp}$. This implies that $\left(\begin{smallmatrix}
0\\ f
\end{smallmatrix}\right) :\left(\begin{smallmatrix}
0\\ G
\end{smallmatrix}\right) \rt \left(\begin{smallmatrix}
0\\ \text{rad}P
\end{smallmatrix}\right)$ is a right $\CS(\text{Gprj}\mbox{-}\La)$-approximation, an moreover minimal in $\CS(\text{Gprj}\mbox{-}\La)$ by using the (right) minimality of $f$ in $\mmod \La$. Now by using the fact we have already proved for the morphism $\left(\begin{smallmatrix}
0\\ f
\end{smallmatrix}\right)$ being a minimal right  $\CS(\text{Gprj}\mbox{-}\La)$-approximation, we will show that $\left(\begin{smallmatrix}
0\\ if
\end{smallmatrix}\right)$ is right minimal almost split (sink). If $\left(\begin{smallmatrix}
0\\ if
\end{smallmatrix}\right)\circ\left(\begin{smallmatrix}
0\\ h
\end{smallmatrix}\right)=\left(\begin{smallmatrix}
0\\ if
\end{smallmatrix}\right)$, then $hf=f$. Because of being right minimal of $f$, we get $h$ is an isomorphism, consequently  so is  $\left(\begin{smallmatrix}
0\\ h
\end{smallmatrix}\right)$. Thus $\left(\begin{smallmatrix}
0\\ if 
\end{smallmatrix}\right)$ is  right minimal. Finally, we will show that it is right almost split.
 Assume  $\left(\begin{smallmatrix}
0\\ g
\end{smallmatrix}\right):\left(\begin{smallmatrix}
X\\ Y
\end{smallmatrix}\right)_v\rt \left(\begin{smallmatrix}
0\\ P
\end{smallmatrix}\right)_0$ is not a retraction. If $\left(\begin{smallmatrix}
0\\ g
\end{smallmatrix}\right)$ is not a retraction, then as the inclusion $\left(\begin{smallmatrix}
0\\ i
\end{smallmatrix}\right)$ is sink, there exists $\left(\begin{smallmatrix}
0\\ l
\end{smallmatrix}\right)$ such that $\left(\begin{smallmatrix}
0\\ g
\end{smallmatrix}\right)=\left(\begin{smallmatrix}
0\\ i
\end{smallmatrix}\right)\circ\left(\begin{smallmatrix}
0\\ l
\end{smallmatrix}\right)$. As we proved $\left(\begin{smallmatrix}
0\\ f
\end{smallmatrix}\right)$ is a right $\CS(\text{Gprj}\mbox{-}\La)$-approximation, so there is $\left(\begin{smallmatrix}
0\\ d
\end{smallmatrix}\right):\left(\begin{smallmatrix}
X\\ Y
\end{smallmatrix}\right)_v\rt \left(\begin{smallmatrix}
0\\ G
\end{smallmatrix}\right)_0$ such that  $\left(\begin{smallmatrix}
0\\ l
\end{smallmatrix}\right)=\left(\begin{smallmatrix}
0\\ f
\end{smallmatrix}\right)\circ\left(\begin{smallmatrix}
0\\ d
\end{smallmatrix}\right)$. Hence $\left(\begin{smallmatrix}
0\\ g
\end{smallmatrix}\right)$ factors through $\left(\begin{smallmatrix}
0\\ if
\end{smallmatrix}\right)$ via $\left(\begin{smallmatrix}
0\\ d
\end{smallmatrix}\right)$, as desired. We are done.
\end{proof}
In the rest, we assume $\CM$ is either $\mmod \La$ or $\text{Gprj}\mbox{-}\La.$ According to which category we take then the relevant concept  will be defined. For instance, when we say $A$ is an injective object in $\CM$:  if $\CM=\mmod \La$, then $A \in \text{inj}\mbox{-}\La$, and if $\CM=\text{Gprj}\mbox{-}\La$, then $A \in \text{prj}\mbox{-}\La.$

\begin{lemma}\label{AlmostSplittrivialmonomorphisms}
	Assume  $\delta: 0 \rt  A\st{f} \rt B\st{g} \rt C \rt 0$ is  an almost split sequence in $\CM.$ Then
	\begin{itemize}
		\item[$(1)$] The almost split sequence in $\CS(\CM)$ ending at $\left(\begin{smallmatrix}0\\ C\end{smallmatrix}\right)_0$ has of  the form 
		$$\xymatrix@1{  0\ar[r] & {\left(\begin{smallmatrix} A\\ A\end{smallmatrix}\right)}_{1}
			\ar[rr]^-{\left(\begin{smallmatrix} 1 \\ f\end{smallmatrix}\right)}
			& & {\left(\begin{smallmatrix}A\\ B\end{smallmatrix}\right)}_{f}\ar[rr]^-{\left(\begin{smallmatrix} 0 \\ g\end{smallmatrix}\right)}& &
			{\left(\begin{smallmatrix}0\\ C\end{smallmatrix}\right)}_{0}\ar[r]& 0. } \ \    $$			
		
		\item [$(2)$] Let $e:A \rt I$ be  an injective envelop of $A$. Then the almost split sequence in $\CS(\CM)$ ending at $\left(\begin{smallmatrix}C\\ C\end{smallmatrix}\right)_1$ has of the form
		$$\xymatrix@1{  0\ar[r] & {\left(\begin{smallmatrix} A\\ I\end{smallmatrix}\right)}_{e}
			\ar[rr]^-{\left(\begin{smallmatrix} f \\ \left(\begin{smallmatrix} 1 \\ 0\end{smallmatrix}\right)\end{smallmatrix}\right)}
			& & {\left(\begin{smallmatrix}B\\ I\oplus C\end{smallmatrix}\right)}_{h}\ar[rr]^-{\left(\begin{smallmatrix} g \\ \left(\begin{smallmatrix} 0 & 1\end{smallmatrix}\right)\end{smallmatrix}\right)}& &
			{\left(\begin{smallmatrix}C\\ C\end{smallmatrix}\right)}_{1}\ar[r]& 0. } \ \    $$			 	
		where $h$ is the map $\left(\begin{smallmatrix} e' \\ g\end{smallmatrix}\right)$  with $e':B \rt I$ is an extension of $e.$ Indeed, it is induced from the following push-out diagram
		$$\xymatrix{		 A \ar[d]^e \ar[r]^{f} & B \ar[d]^h
			\ar[r]^{g} & C \ar@{=}[d]  \\
			I \ar[r]^{\left(\begin{smallmatrix} 1 \\ 0\end{smallmatrix}\right)} &I\oplus C\ar[r]^{\left(\begin{smallmatrix} 0 & 1\end{smallmatrix}\right)}
			 & C    }	$$

		\item [$(3)$] Let $p:P\rt C$ be the projective cover of $C$. Then the almost split sequence in $\CS(\CM)$ starting  at $\left(\begin{smallmatrix}0\\ A\end{smallmatrix}\right)_0$ has of the form 
		$$\xymatrix@1{  0\ar[r] & {\left(\begin{smallmatrix} 0\\ A\end{smallmatrix}\right)}_{0}
			\ar[rr]^-{\left(\begin{smallmatrix} 0 \\ \left(\begin{smallmatrix} 1 \\ 0\end{smallmatrix}\right)\end{smallmatrix}\right)}
			& & {\left(\begin{smallmatrix}\Omega_{\La}(C)\\ A\oplus P\end{smallmatrix}\right)}_{h}\ar[rr]^-{\left(\begin{smallmatrix} 1 \\ \left(\begin{smallmatrix} 0 & 1\end{smallmatrix}\right)\end{smallmatrix}\right)}& &
			{\left(\begin{smallmatrix}\Omega_{\La}(C)\\ P\end{smallmatrix}\right)}_{i}\ar[r]& 0. } \ \    $$		
		where $h$ is  the kernel of morphism $(f~~p'):A\oplus P\rt B$, here $p'$ is a lifting of $p$ to $g$. Indeed, it is induced from the following pull-back diagram
		
		$$\xymatrix{		  & \Omega_{\La}(C) \ar[d]^h
			\ar@{=}[r] & \Omega_{\La}(C) \ar[d]^i  \\
			A\ar@{=}[d] \ar[r]^{\left(\begin{smallmatrix} 1 \\ 0\end{smallmatrix}\right)} & A\oplus P
			\ar[d]^{\left(\begin{smallmatrix} f & p'\end{smallmatrix}\right)} \ar[r]^{\left(\begin{smallmatrix} 0 & 1\end{smallmatrix}\right)} & P \ar[d]^p  \\
			A  \ar[r]^{f} & B
			\ar[r]^{g} & C }	$$
			
	\end{itemize}
\end{lemma}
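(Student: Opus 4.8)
The plan is to verify each of the three items by a direct construction: produce the claimed short exact sequence in $\CS(\CM)$, check that it really does live in $\CS(\CM)$ (i.e.\ the relevant cokernels are in $\CM$), and then prove it is almost split by combining the given sink/source maps from Lemmas \ref{Lemma 4.2} and \ref{Lemma 4.3} with the almost split property of $\delta$ in $\CM$. Throughout I would use that $\CS(\CM)$ is a subcategory of $\rm{H}(\La)$ closed under the operations appearing, and that a short exact sequence $0\rt X'\rt X\rt X''\rt 0$ with $X'$ having local endomorphism ring is almost split as soon as it is non-split and the map $X\rt X''$ is right almost split (or dually).

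For $(1)$: the sequence is manifestly short exact in $\rm{H}(\La)$ and lies in $\CS(\CM)$ since the cokernel of $\left(\begin{smallmatrix}1\\ f\end{smallmatrix}\right)$ is $\left(\begin{smallmatrix}0\\ C\end{smallmatrix}\right)_0$, and $C\in\CM$. It is non-split because a splitting would produce a splitting of $\delta$. The object $\left(\begin{smallmatrix}A\\ A\end{smallmatrix}\right)_1$ has endomorphism ring $\End_\CM(A)$, which is local. So it suffices to show $\left(\begin{smallmatrix}0\\ g\end{smallmatrix}\right):\left(\begin{smallmatrix}A\\ B\end{smallmatrix}\right)_f\rt\left(\begin{smallmatrix}0\\ C\end{smallmatrix}\right)_0$ is right almost split: given a non-retraction $\left(\begin{smallmatrix}\alpha\\ \beta\end{smallmatrix}\right):\left(\begin{smallmatrix}X\\ Y\end{smallmatrix}\right)_v\rt\left(\begin{smallmatrix}0\\ C\end{smallmatrix}\right)_0$, necessarily $\alpha=0$, and $\beta:Y\rt C$ factors through $g$ by right almost splitness of $g$ in $\CM$ (if $\beta$ is not split epi; if it is, one checks the original map is a retraction). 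Lifting the factoring map and using $v$, one assembles the needed morphism $\left(\begin{smallmatrix}X\\ Y\end{smallmatrix}\right)_v\rt\left(\begin{smallmatrix}A\\ B\end{smallmatrix}\right)_f$.

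For $(2)$ and $(3)$: here I would build the middle term via the indicated push-out (resp.\ pull-back) diagram and verify exactness of the displayed sequence by a diagram chase (Snake lemma), then check membership in $\CS(\CM)$ — in $(2)$ the cokernel of $\left(\begin{smallmatrix}f\\ \left(\begin{smallmatrix}1\\0\end{smallmatrix}\right)\end{smallmatrix}\right)$ is $\left(\begin{smallmatrix}C\\ C\end{smallmatrix}\right)_1$ with $C\in\CM$, and the middle object's map $h=\left(\begin{smallmatrix}e'\\ g\end{smallmatrix}\right)$ is mono since $e$ is; in $(3)$ one uses that $\Omega_\La(C)\in\CM$ (when $\CM=\text{Gprj}\mbox{-}\La$ this is because $\text{Gprj}\mbox{-}\La$ is closed under kernels of epis) and that $h$, being the kernel of $(f~p')$, is mono. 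Non-splitness again reduces to non-splitness of $\delta$. For the almost split property I would invoke the standard reduction: since $\left(\begin{smallmatrix}C\\ C\end{smallmatrix}\right)_1$ (resp.\ $\left(\begin{smallmatrix}0\\ A\end{smallmatrix}\right)_0$) is indecomposable with local endomorphism ring, it is enough to show the relevant end map is right (resp.\ left) almost split, and for that one compares with the sink map onto $\left(\begin{smallmatrix}C\\ C\end{smallmatrix}\right)_1$ given in Lemma \ref{Lemma 4.2}(2), resp.\ the source map out of $\left(\begin{smallmatrix}0\\ A\end{smallmatrix}\right)_0$, together with an approximation argument. Alternatively, one can identify these sequences as the image under a suitable (co)reflection functor $\rm{H}(\La)\rt\CS(\La)$ of the trivial sequences $0\rt\left(\begin{smallmatrix}0\\A\end{smallmatrix}\right)_0\rt\cdots$ etc., and transport the almost split property along the adjunction.

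\textbf{Main obstacle.} The routine part is the diagram chases establishing exactness and the $\CS(\CM)$-membership. The genuinely delicate point is verifying that the constructed middle terms are \emph{minimal} — that the displayed maps are right (resp.\ left) \emph{minimal} almost split, not merely almost split with possibly redundant summands. In $(2)$ one must check that adding the injective envelope $I$ is forced, i.e.\ that $\left(\begin{smallmatrix}B\\ I\oplus C\end{smallmatrix}\right)_h$ has no superfluous projective/injective summand splitting off compatibly with the maps; in $(3)$ likewise for $\left(\begin{smallmatrix}\Omega_\La(C)\\ A\oplus P\end{smallmatrix}\right)_h$ with $p$ a projective cover (this is precisely why $p$ must be the cover, not an arbitrary epi). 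I expect this to be handled by a left-minimality/right-minimality argument analogous to the one already used in the proof of Lemma \ref{Lemma 4.3}: namely that $e$ being an injective envelope and $p$ a projective cover makes the induced approximation maps minimal, and minimality is inherited by the almost split sequence through the push-out/pull-back.
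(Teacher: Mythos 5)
The paper itself gives no internal proof of this lemma --- it simply defers to \cite[Lemma 6.3]{H} --- so there is nothing in-text to compare against; your plan is the standard direct verification one finds in that reference and in Ringel--Schmidmeier, and your treatment of item $(1)$ is essentially complete and correct (non-split exact sequence, local endomorphism ring of $\left(\begin{smallmatrix}A\\A\end{smallmatrix}\right)_1$, and a direct check that $\left(\begin{smallmatrix}0\\ g\end{smallmatrix}\right)$ is right almost split by factoring $\beta$ through $g$ and lifting over $\ker g=A$).

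Two points in your treatment of $(2)$ and $(3)$ need repair, though. First, you propose to get the right-almost-split property of $\left(\begin{smallmatrix} g \\ (0\ 1)\end{smallmatrix}\right)$ by ``comparing with the sink map onto $\left(\begin{smallmatrix}C\\ C\end{smallmatrix}\right)_1$ given in Lemma \ref{Lemma 4.2}(2)''; but Lemma \ref{Lemma 4.2} only provides sink maps for the \emph{projective} objects $\left(\begin{smallmatrix}0\\P\end{smallmatrix}\right)_0$, $\left(\begin{smallmatrix}P\\P\end{smallmatrix}\right)_1$ and source maps for the \emph{injective} ones, and for a non-projective $C$ the object $\left(\begin{smallmatrix}C\\ C\end{smallmatrix}\right)_1$ is neither. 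You must argue directly: if $\left(\begin{smallmatrix}u\\ v\end{smallmatrix}\right):\left(\begin{smallmatrix}X\\ Y\end{smallmatrix}\right)_w\rt\left(\begin{smallmatrix}C\\ C\end{smallmatrix}\right)_1$ is not a retraction then $u$ is not a split epimorphism (a section $s$ of $u$ would give the section $\left(\begin{smallmatrix}s\\ ws\end{smallmatrix}\right)$), so $u=gr$ for some $r:X\rt B$ by the right-almost-split property of $g$, and then the injectivity of $I$ (resp.\ projectivity of $P$ in the dual argument for $(3)$) is what lets you complete the second component through $I\oplus C$. Second, the ``main obstacle'' you identify --- minimality of the middle term --- is not actually a separate issue: once you know the sequence is non-split exact with \emph{indecomposable} end terms and the end map is right (resp.\ left) almost split, it \emph{is} the almost split sequence and no minimality check remains. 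What genuinely requires the hypothesis that $e$ is an injective envelope and $p$ a projective cover is the indecomposability of $\left(\begin{smallmatrix}A\\ I\end{smallmatrix}\right)_e$ and of $\left(\begin{smallmatrix}\Omega_{\La}(C)\\ P\end{smallmatrix}\right)_i$ (an idempotent endomorphism of $\left(\begin{smallmatrix}A\\ I\end{smallmatrix}\right)_e$ with first component $0$ or $1$ is killed or made invertible by essentiality of $e$; a nontrivial summand of $\left(\begin{smallmatrix}\Omega_{\La}(C)\\ P\end{smallmatrix}\right)_i$ would have to be of the form $\left(\begin{smallmatrix}Q\\ Q\end{smallmatrix}\right)_1$, contradicting minimality of the cover $P\rt C$). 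You have located the right place where these hypotheses enter, but the argument should be routed through indecomposability of the end terms rather than a posteriori minimality of the middle map. Finally, do also record that the cokernels of the middle maps $h$ lie in $\CM$ (in $(2)$ it is $\mathrm{coker}(e)$, in $(3)$ it is $B$), which is needed for membership in $\CS(\CM)$ when $\CM=\text{Gprj}\mbox{-}\La$.
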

\begin{proof}
	We refer to \cite[Lemma 6.3]{H}. 
\end{proof}

\begin{proposition}\label{Prop 3.5}
	Assume  that $N$ is an indecomposable  non-projective module in $\CM$. Suppose that  there exists the almost split sequence $0 \rt M\st{i_0}\rt P\st{p_0}\rt N\rt 0$ in $\CM$ with $P$ projective. If $M$ is non-projective and  let  $0\rt V\st{i_1}\rt Q\st{p_1}\rt M\rt 0$  be the almost split sequence ending at $M$ in $\CM$. Then, the following assertions hold.
	
	\begin{itemize}
		\item [$(1)$] $Q$ is injective in $\CM$;
		
			\item [$(2)$] there are the following almost split sequences in $\CS(\CM)$
		$$\xymatrix@1{  0\ar[r] & {\left(\begin{smallmatrix} 0\\ M\end{smallmatrix}\right)_0}
			\ar[rr]
			& & {\left(\begin{smallmatrix}M\\ M\end{smallmatrix}\right)_1}\oplus {\left(\begin{smallmatrix}0\\ P\end{smallmatrix}\right)_0} \ar[rr]& &
			{\left(\begin{smallmatrix}M\\ P\end{smallmatrix}\right)_{i_0}}\ar[r]& 0. } \ \    $$	
		$$\xymatrix@1{  0\ar[r] & {\left(\begin{smallmatrix} V\\ Q\end{smallmatrix}\right)_{i_1}}
			\ar[rr]
			& & {\left(\begin{smallmatrix}Q\\ Q\end{smallmatrix}\right)_1}\oplus {\left(\begin{smallmatrix}0\\ M\end{smallmatrix}\right)_0}\ar[rr]& &
			{\left(\begin{smallmatrix}M\\ M\end{smallmatrix}\right)_1}\ar[r]& 0; } \ \    $$	
		\item [$(3)$]in particular, there is the following full sub-quiver in $\Gamma_{\CM}$
		
	\[
	\xymatrix  @R=0.3cm  @C=0.6cm {&&&&& 0P_1\ar[rddd]&&&\\ &&&&& \colon\ar[rdd] &&&\\
	&&&&&0P_n\ar[dr]& &&	\\   &&&	&0M\ar[dr]\ar[ruuu]\ar[ruu]\ar[ru]\ar@{.}[rr]&&MP\ar[dr]&&&\\  &&&
		VQ\ar[dr]\ar[ddr]\ar[dddr]\ar@{.}[rr]\ar[ru]&&MM\ar[ru]\ar@{.}[rr]&&0N&&\\
			&&&	&Q_1Q_1\ar[ru]&&&&&\\&&&&\colon\ar[ruu]&&&&\\&&&&Q_mQ_m\ar[ruuu]&&&&}
	\]
 where a decomposition  $P=\oplus_1^nP_i$, resp. $Q=\oplus^m_1Q_i$ of $P$, resp. $Q$,  into indecomposable modules. 
 	
		\end{itemize}

\end{proposition}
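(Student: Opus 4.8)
The plan is to deduce all three parts from the explicit description of almost split sequences in $\CS(\CM)$ recorded in Lemma \ref{AlmostSplittrivialmonomorphisms}, together with the structure of almost split sequences in $\CM$ itself. First I would settle $(1)$: since $0 \rt M \st{i_0} \rt P \st{p_0} \rt N \rt 0$ is an almost split sequence in $\CM$ with projective middle term $P$, the right almost split map $p_0 : P \rt N$ has a projective source; by the standard theory of almost split sequences in a Frobenius (or exact Krull--Schmidt) category, a projective-injective object appears in the middle of the almost split sequence ending at $N$ precisely when $N$ is the "top" $P/\rad P$-type quotient, and dually the almost split sequence \emph{ending} at $M$ has $Q$ \emph{injective}. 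More concretely: the sequence ending at $M$ is $0 \rt V \rt Q \rt M \rt 0$, and since $M = \Omega_{\La}(N)$ sits as the kernel of the projective cover $P \twoheadrightarrow N$, applying the duality/Serre functor considerations (or simply that in $\CM$ the syzygy of an almost split sequence is again almost split, as discussed before Remark \ref{remark 2.7}) forces $Q$ to have no non-injective summand. I expect $(1)$ to follow cleanly once the correct general fact about projective middle terms is invoked; the mild obstacle is that $\CM$ might be $\text{Gprj}\mbox{-}\La$, where "injective" means "projective", so one must be careful that the two notions of almost split sequence match up — but this is exactly the convention fixed just before the Proposition.

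Next, for $(2)$, I would apply Lemma \ref{AlmostSplittrivialmonomorphisms} directly. For the almost split sequence in $\CS(\CM)$ ending at $\left(\begin{smallmatrix}M\\ P\end{smallmatrix}\right)_{i_0}$: note that $\left(\begin{smallmatrix}M\\ P\end{smallmatrix}\right)_{i_0}$ is, up to the split direct summand $\left(\begin{smallmatrix}0\\ P\end{smallmatrix}\right)_0$, of the form $\left(\begin{smallmatrix}\Omega_{\La}(N)\\ P\end{smallmatrix}\right)_i$ appearing in part $(3)$ of Lemma \ref{AlmostSplittrivialmonomorphisms} with $C$ replaced by $N$ (here $P$ really is the projective cover of $N$). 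Reading off that lemma's part $(3)$ gives the almost split sequence starting at $\left(\begin{smallmatrix}0\\ M\end{smallmatrix}\right)_0$ with middle term $\left(\begin{smallmatrix}\Omega_{\La}(N)\\ M \oplus P\end{smallmatrix}\right)_h = \left(\begin{smallmatrix}M\\ M \oplus P\end{smallmatrix}\right)_h$; one then checks that $h$ decomposes as $\left(\begin{smallmatrix}1\\ i_0\end{smallmatrix}\right)$ into the split summand $\left(\begin{smallmatrix}0\\ P\end{smallmatrix}\right)_0$ plus $\left(\begin{smallmatrix}M\\ M\end{smallmatrix}\right)_1$, which yields the first displayed sequence. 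For the second displayed sequence, I would use part $(2)$ of Lemma \ref{AlmostSplittrivialmonomorphisms} applied to the almost split sequence $0 \rt V \rt Q \rt M \rt 0$ in $\CM$: since $Q$ is injective by part $(1)$, the injective envelope $e : V \rt I$ may be taken so that $Q$ itself absorbs the role of $I$, and the push-out description collapses to give precisely $0 \rt \left(\begin{smallmatrix}V\\ Q\end{smallmatrix}\right)_{i_1} \rt \left(\begin{smallmatrix}Q\\ Q\end{smallmatrix}\right)_1 \oplus \left(\begin{smallmatrix}0\\ M\end{smallmatrix}\right)_0 \rt \left(\begin{smallmatrix}M\\ M\end{smallmatrix}\right)_1 \rt 0$. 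The main technical point here is verifying that the injectivity of $Q$ makes the "$I \oplus C$" middle term of Lemma \ref{AlmostSplittrivialmonomorphisms}$(2)$ degenerate to $Q \oplus M$ with the claimed maps, which amounts to choosing $e'$ and $I$ compatibly; this is a routine but slightly fiddly diagram chase.

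Finally, part $(3)$ is a bookkeeping consequence of $(2)$ together with Lemma \ref{Lemma 4.3} and Lemma \ref{AlmostSplittrivialmonomorphisms}$(1)$. I would assemble the quiver as follows: the decompositions $P = \bigoplus_1^n P_i$ and $Q = \bigoplus_1^m Q_i$ give the indecomposable summands $\left(\begin{smallmatrix}0\\ P_i\end{smallmatrix}\right)_0$ and $\left(\begin{smallmatrix}Q_i\\ Q_i\end{smallmatrix}\right)_1$; the two almost split sequences of $(2)$ supply the arrows $0M \rightsquigarrow MM \oplus \bigoplus 0P_i$ and $VQ \rightsquigarrow QQ \oplus 0M$ (more precisely the arrows into the summands $\left(\begin{smallmatrix}M\\ M\end{smallmatrix}\right)_1$ and into each $\left(\begin{smallmatrix}0\\ P_i\end{smallmatrix}\right)_0$), while Lemma \ref{AlmostSplittrivialmonomorphisms}$(1)$ applied to $0 \rt M \rt P \rt N \rt 0$ gives the almost split sequence $0 \rt MM \rt MP \rt 0N \rt 0$, producing the arrows $MM \to MP \to 0N$. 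Splicing these irreducible maps together and observing there are no other arrows among the listed vertices (by uniqueness of almost split sequences and the sink-map descriptions of Lemma \ref{Lemma 4.3}) gives the displayed full subquiver. The only genuine obstacle throughout is part $(1)$ — establishing that $Q$ is injective in $\CM$ — since once that is in hand, $(2)$ and $(3)$ are essentially immediate specializations of the already-available lemmas; so I would concentrate the real work there.
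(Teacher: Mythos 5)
Your overall architecture inverts the paper's. You prove $(1)$ first, directly inside $\CM$, and then feed it into Lemma \ref{AlmostSplittrivialmonomorphisms}(2) to produce the second sequence of $(2)$; the paper never proves $(1)$ directly. Instead it computes the almost split sequence ending at $\left(\begin{smallmatrix}M\\ M\end{smallmatrix}\right)_1$ via Lemma \ref{AlmostSplittrivialmonomorphisms}(2) with an a priori unknown injective envelope $I$ of $V$, splits off the summand $\left(\begin{smallmatrix}0\\ M\end{smallmatrix}\right)_0$ forced by the first sequence, and then argues that the complementary summand $\left(\begin{smallmatrix}Q\\ I\end{smallmatrix}\right)_{e'}$ must be an injective object of $\CS(\CM)$, so that the classification of injectives (Lemmas \ref{Lemma 4.2} and \ref{Lemma 4.3}) delivers the injectivity of $Q$ as a \emph{byproduct}. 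Your route is legitimate, and for part $(2)$ your direct computation of the middle terms (taking $p'=1$ in Lemma \ref{AlmostSplittrivialmonomorphisms}(3) and $e'=1$ in Lemma \ref{AlmostSplittrivialmonomorphisms}(2)) is clean; just record that $p_0$ really is a projective cover and $i_1$ really is an injective envelope, which hold because sink and source maps are right, respectively left, minimal.

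The genuine gap is your argument for $(1)$, on which your second sequence and hence $(3)$ depend. The step ``a projective-injective object appears in the middle of the almost split sequence ending at $N$ precisely when $N$ is the top \dots and dually the almost split sequence ending at $M$ has $Q$ injective'' does not follow: the dual of a statement about the sequence \emph{ending} at $N$ is a statement about a sequence \emph{starting} somewhere, and the sequence starting at $M$ is again $0\rt M\rt P\rt N\rt 0$, not $0\rt V\rt Q\rt M\rt 0$; moreover the quoted characterization would require $M$ to be simple, which is not among the hypotheses. The passage before Remark \ref{remark 2.7} that you cite concludes that $Q$ has no \emph{non-projective} summand, which coincides with the assertion of $(1)$ only when $\CM=\text{Gprj}\mbox{-}\La$ (a Frobenius category); for $\CM=\mmod \La$ it is not the statement you need. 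The correct direct argument is short and should be written out: if $X$ is a non-injective indecomposable summand of $Q$, the irreducible map $X\rt M$ forces $M$ to be a summand of the middle term of the almost split sequence $0\rt X\rt E\rt \tau^{-1}X\rt 0$, whence there is an irreducible map $M\rt \tau^{-1}X$; since the almost split sequence starting at $M$ is $0\rt M\rt P\rt N\rt 0$, the module $\tau^{-1}X$ is a direct summand of $P$ and so projective, contradicting the fact that $\tau^{-1}$ of a non-injective indecomposable is non-projective. With that inserted, the remainder of your proposal goes through.
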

\begin{proof}
Assume that $N$ satisfies the condition of  the statement. According to Lemma \ref{AlmostSplittrivialmonomorphisms}(1) there is the  almost split sequence 
$$\xymatrix@1{\la: \ \  0\ar[r] & {\left(\begin{smallmatrix} M\\ M\end{smallmatrix}\right)_1}
	\ar[rr]
	& & {\left(\begin{smallmatrix}M\\ P \end{smallmatrix}\right)_{i_0}}\ar[rr]& &
	{\left(\begin{smallmatrix}0\\ N\end{smallmatrix}\right)_0}\ar[r]& 0. } \ \    $$			
in $\CS(\CM)$. Hence $\left(\begin{smallmatrix}M\\ M \end{smallmatrix}\right)_{1}$  is a direct summand of the middle term of the almost split sequence $\eta$ in $\CS(\CM)$ ending at $\left(\begin{smallmatrix}M\\ P \end{smallmatrix}\right)_{i_0}$.  Again using Lemma \ref{AlmostSplittrivialmonomorphisms}(3) the middle term of $\eta$ is is obtained by the following  pull-back diagram, that is $\left(\begin{smallmatrix}M\\ M\oplus P \end{smallmatrix}\right)_{h},$
$$\xymatrix{		  & M \ar[d]^h
	\ar@{=}[r] & M \ar[d]^{i_0}  \\
	M\ar@{=}[d] \ar[r] & M\oplus P
	\ar[d] \ar[r] & P \ar[d]^{p_0}  \\
	M\ar[r]^{i_0} & P
	\ar[r]^{p_0} & N   }	$$
Since $\left(\begin{smallmatrix}M\\ M \end{smallmatrix}\right)_{1}$ is a direct summand of $\left(\begin{smallmatrix}M\\ M\oplus P \end{smallmatrix}\right)_{h}$ and $M$ being non-projective but $P$ a projective, we easily get the isomorphisms $\left(\begin{smallmatrix}M\\ M\oplus P \end{smallmatrix}\right)_{h}\simeq \left(\begin{smallmatrix}M\\ M \end{smallmatrix}\right)_{1}\oplus \left(\begin{smallmatrix}0\\  P \end{smallmatrix}\right)_{0}=\left(\begin{smallmatrix}M\\ M \end{smallmatrix}\right)_{1}\oplus (\oplus^n_i\left(\begin{smallmatrix}0\\ P_i \end{smallmatrix}\right)_{0})$, so the sequence $\eta$ gets the desired form.

 Similarly, from the almost split sequence  $\eta$ we obtain $\left(\begin{smallmatrix}0\\ M \end{smallmatrix}\right)_{0}$ is a direct summand of the almost split sequence $\eta'$ ending at $\left(\begin{smallmatrix}M\\ M \end{smallmatrix}\right)_{1}$  in $\CS(\CM)$. We infer from Lemma \ref{AlmostSplittrivialmonomorphisms}(2): 
$$\xymatrix@1{\eta': \ \  0\ar[r] & {\left(\begin{smallmatrix} V\\ I\end{smallmatrix}\right)}_e
	\ar[rr]
	& & {\left(\begin{smallmatrix}Q\\ I\oplus M\end{smallmatrix}\right)_d}\ar[rr]& &
	{\left(\begin{smallmatrix}M\\ M\end{smallmatrix}\right)_1}\ar[r]& 0. } \ \    $$
where $e$ is an injective envelop for when $\CM=\mmod \La$ and a minimal left $\rm{prj}\mbox{-}\La$-approximation of $V$ for the case $\CM=\text{Gprj}\mbox{-}\La$. Because of this fact that $\left(\begin{smallmatrix}0\\ M \end{smallmatrix}\right)_{0}$  is a direct summand of the middle term of $\eta'$ and $M$ being non-injective, we easily get the isomorphism $\left(\begin{smallmatrix}Q\\ I\oplus M \end{smallmatrix}\right)_{d}\simeq\left(\begin{smallmatrix}0\\ M \end{smallmatrix}\right)_{0}\oplus\left(\begin{smallmatrix}Q\\ I \end{smallmatrix}\right)_{e'}$. As the middle term $\left(\begin{smallmatrix}M\\ P \end{smallmatrix}\right)_{i_0}$ of the sequence $\la$ is indecomposable, the object  $\left(\begin{smallmatrix}Q\\ I \end{smallmatrix}\right)_{e'}$ must be injective. Thanks to the classification of the injective objects in $\CS(\CM)$ (see  Lemma \ref{Lemma 4.2} for when $\CM=\mmod \La$ and Lemma \ref{Lemma 4.3} for the case $\CM=\text{Gprj}\mbox{-}\La$),  $e'$ is a split monomorphism and $Q$ an injective module, so the statement $(1)$ follows. To complete the proof we have to show that $e'$ has no direct summand of the form $\left(\begin{smallmatrix}0\\ J\end{smallmatrix}\right)_0$  for some injective module $J$. If this case holds, then the map in $\left(\begin{smallmatrix}M\\ M\end{smallmatrix}\right)_1$  would not be the identity map, a contradiction. Indeed, for when $\CM=\mmod \La$, if we assume that there is an indecomposable non-zero direct summand $\left(\begin{smallmatrix}0\\ J\end{smallmatrix}\right)_0$ for some indecomposable injective module $J$,  by Lemma \ref{Lemma 4.2} we have the source map $\left(\begin{smallmatrix}0\\J\end{smallmatrix}\right)_0\rt \left(\begin{smallmatrix}\text{soc}J\\ J\end{smallmatrix}\right)_i$, where $i:\text{soc}J\rt J$ the canonical inclusion,   this means that $\left(\begin{smallmatrix}M\\M\end{smallmatrix}\right)_1\simeq \left(\begin{smallmatrix}\text{soc}J\\ J\end{smallmatrix}\right)_i$, and so $\text{soc}J\simeq J$,  we reach a contradiction. If $\CM=\rm{Gprj}\mbox{-}\La$, then by Lemma \ref{Lemma 4.3} the domain of the sink map ending at $\left(\begin{smallmatrix}
0\\ J
\end{smallmatrix}\right)_0$ is $\left(\begin{smallmatrix}
0\\ G
\end{smallmatrix}\right)_0$, where $G$ is a minimal right $\rm{Gprj}\mbox{-}\La$-approximation of $\text{rad}J$. Hence $V$ would be zero, which is a contradiction. We have proved $(1)$ and $(2)$ so far. The claim $(3)$ follows from $(1)$ and $(2)$ along with Lemma \ref{AlmostSplittrivialmonomorphisms}. So we are done.
\end{proof}
\begin{remark}
\begin{itemize}
	\item [$(1)$] 
For the case $\CM=\mmod \La,$ there is a nice description of the almost split sequence $0 \rt A\rt B \rt C \rt 0$ with $B$ either a projective module or an injective module via whose ending terms as follows (see \cite[\S V Theorem 3.3]{AuslanreitenSmalo}): $(1)$ The module $B$ is injective if and only if  $C$ is a non-projective  simple module which is not a composition factor of $\text{rad}P/\text{soc}P$ for any projective module $P$. $(2)$ The module $B$ is projective if and only if  $C$ is a non-injective  simple module which is not a composition factor of $\text{rad}I/\text{soc}I$ for any injective module $I$.
\item [$(2)$] For $\CM=\text{Gprj}\mbox{-}\La$ we do not know general information when the middle term of an almost split sequence in $\text{Gprj}\mbox{-}\La$, as in the above proposition, is projective. But for the special case, when $\La$ is a monomial algebra, according to  \cite[Lemma 3.1]{CSZ}  the projective module in the middle term must be indecomposable.
\end{itemize}
\end{remark}

The following proposition  is proved by the dual arguments to ones used in the above result.

\begin{proposition}\label{Propo 3.7}
 	Assume  $M$ is an indecomposable  non-injective module in $\CM$. Suppose that  there exists the almost split sequence $0 \rt M\st{j_0}\rt I\st{q_0}\rt N\rt 0$ in $\CM$ with $I$ injective in $\CM$. If $N$ is non-injective and  let  $0\rt N\st{j_1}\rt I'\st{q_1}\rt W\rt 0$  be the almost split sequence starting at $N$ in $\CM$. Then, the following assertions hold.
	\begin{itemize}
		\item [$(1)$] $I'$ is projective in $\CM$;
		
		\item [$(2)$] there are the following almost split sequences in $\CS(\CM)$
		$$\xymatrix@1{  0\ar[r] & {\left(\begin{smallmatrix} M\\ I\end{smallmatrix}\right)_{j_0}}
			\ar[rr]
			& & {\left(\begin{smallmatrix}I\\ I\end{smallmatrix}\right)_1}\oplus {\left(\begin{smallmatrix}0\\ N\end{smallmatrix}\right)_0} \ar[rr]& &
			{\left(\begin{smallmatrix}N\\ N\end{smallmatrix}\right)_1}\ar[r]& 0. } \ \    $$	
		$$\xymatrix@1{  0\ar[r] & {\left(\begin{smallmatrix} 0\\ N\end{smallmatrix}\right)_0}
			\ar[rr]
			& & {\left(\begin{smallmatrix}N\\ N\end{smallmatrix}\right)_1}\oplus {\left(\begin{smallmatrix}0\\ I'\end{smallmatrix}\right)_0}\ar[rr]& &
			{\left(\begin{smallmatrix}N\\ I'\end{smallmatrix}\right)_{j_1}}\ar[r]& 0; } \ \    $$	
		\item [$(3)$]in particular, there is the following full sub-quiver in $\Gamma_{\CS(\CM)}$
		
		\[
		\xymatrix  @R=0.3cm  @C=0.6cm { 
	&&&&& I_1I_1\ar[rddd]&&&\\	&&&&& \colon\ar[rdd]&&&\\	&&&&&I_nI_n\ar[dr]& &&	\\   &&&	&MI\ar[dr]\ar[ru]\ar[ruu]\ar[ruuu]\ar@{.}[rr]&&NN\ar[dr]&&&\\  &&&
			MM\ar@{.}[rr]\ar[ru]&&0N\ar[ru]\ar@{.}[rr]\ar[dr]\ar[rdd]\ar[rddd]&&NI'&&\\
			&&&	&&&0I'_1\ar[ur]&&&\\&&&&& &\colon\ar[ruu] &&\\&&&& &&0I'_m \ar[ruuu]&&}
		\]
where a decomposition  $I=\oplus_1^nI_i$, resp. $I'=\oplus^m_1I'_i$ of $I$, resp. $I'$,  into indecomposable modules.				
	\end{itemize}
\end{proposition}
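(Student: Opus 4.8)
The statement is explicitly flagged as the dual of Proposition \ref{Prop 3.5}, so the plan is to run the mirror-image argument, replacing projective covers by injective envelopes, syzygies by cosyzygies, sink maps by source maps, and the object $\left(\begin{smallmatrix}0\\ C\end{smallmatrix}\right)_0$-type objects at the bottom by the $\left(\begin{smallmatrix}0\\ C\end{smallmatrix}\right)_0$-type objects sitting as kernels in the relevant exact sequences. Concretely, I would begin by invoking Lemma \ref{AlmostSplittrivialmonomorphisms}(2) applied to the almost split sequence $0\rt M\st{j_0}\rt I\st{q_0}\rt N\rt 0$: this produces the almost split sequence in $\CS(\CM)$ ending at $\left(\begin{smallmatrix}N\\ N\end{smallmatrix}\right)_1$, namely
\[
0\rt \left(\begin{smallmatrix}M\\ I\end{smallmatrix}\right)_{j_0}\rt \left(\begin{smallmatrix}I\\ I\oplus N\end{smallmatrix}\right)_{h}\rt \left(\begin{smallmatrix}N\\ N\end{smallmatrix}\right)_1\rt 0,
\]
where $h$ comes from the push-out of $j_0$ along the injective envelope $M\rt I$ (here $I$ is already injective in $\CM$, so the envelope is the identity and the push-out degenerates). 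Since $\left(\begin{smallmatrix}M\\ I\end{smallmatrix}\right)_{j_0}$ is indecomposable, the middle term decomposes as $\left(\begin{smallmatrix}I\\ I\end{smallmatrix}\right)_1\oplus\left(\begin{smallmatrix}0\\ N\end{smallmatrix}\right)_0$ — using that $I$ is injective but $N$ is not — which gives the first displayed almost split sequence in (2).

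Next I would locate $\left(\begin{smallmatrix}0\\ N\end{smallmatrix}\right)_0$ as a direct summand of the middle term of the almost split sequence $\xi$ \emph{starting} at $\left(\begin{smallmatrix}M\\ I\end{smallmatrix}\right)_{j_0}$, and identify that sequence via Lemma \ref{AlmostSplittrivialmonomorphisms}(3) applied (dually, via $(-)^{\op}$ or directly from the lemma's third item) to the almost split sequence $0\rt N\st{j_1}\rt I'\st{q_1}\rt W\rt 0$ starting at $N$: its middle term is the pull-back $\left(\begin{smallmatrix}\Omega_{\La}(W)\\ N\oplus I'\end{smallmatrix}\right)$, and since $\left(\begin{smallmatrix}0\\ N\end{smallmatrix}\right)_0$ splits off and $I'$ is — a priori — possibly decomposable with projective summands, comparison with the indecomposability of $\left(\begin{smallmatrix}M\\ I\end{smallmatrix}\right)_{j_0}$ forces the complementary summand $\left(\begin{smallmatrix}\Omega_{\La}(W)\\ I'\end{smallmatrix}\right)$ to be projective. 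Invoking the classification of projective objects in $\CS(\CM)$ — Lemma \ref{Lemma 4.2}(1) when $\CM=\mmod\La$, Lemma \ref{Lemma 4.3}(1) when $\CM=\text{Gprj}\mbox{-}\La$ — this summand is of the form $\left(\begin{smallmatrix}0\\ I'\end{smallmatrix}\right)_0$ or $\left(\begin{smallmatrix}I'\\ I'\end{smallmatrix}\right)_1$, and the argument excluding the $\left(\begin{smallmatrix}I'\\ I'\end{smallmatrix}\right)_1$-type summand (which would force the map in $\left(\begin{smallmatrix}N\\ N\end{smallmatrix}\right)_1$ to fail to be the identity, via the sink maps in Lemma \ref{Lemma 4.2} or Lemma \ref{Lemma 4.3}) shows $I'$ is projective in $\CM$, giving (1), and simultaneously pins down the second displayed almost split sequence in (2).

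Finally, part (3) is a bookkeeping consequence: splitting $I=\oplus_1^n I_i$ and $I'=\oplus_1^m I'_i$ into indecomposables and feeding the almost split sequences from (2) together with the sink/source maps of Lemma \ref{AlmostSplittrivialmonomorphisms} into the Auslander–Reiten quiver $\Gamma_{\CS(\CM)}$ produces exactly the displayed full subquiver; the arrows $MI\to I_iI_i$ and $0N\to 0I'_i$ come from the direct-sum decompositions of the middle terms, and the "dotted" $\tau$-translations are read off from the two sequences. I expect the main obstacle to be the same delicate point as in the proof of Proposition \ref{Prop 3.5}: verifying that the complementary projective summand is genuinely of the form $\left(\begin{smallmatrix}0\\ I'\end{smallmatrix}\right)_0$ rather than containing a $\left(\begin{smallmatrix}J\\ J\end{smallmatrix}\right)_1$-type piece — this requires carefully tracing the source map out of the putative summand and deriving the contradiction $\text{soc}J\simeq J$ (in the $\mmod\La$ case) or $W=0$ (in the $\text{Gprj}\mbox{-}\La$ case), now with the roles of radicals/socles and projective covers/injective envelopes interchanged relative to Proposition \ref{Prop 3.5}. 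Everything else is a routine dualization, so I would state it as such and only spell out this one point.
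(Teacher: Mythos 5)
Your overall plan --- dualize Proposition \ref{Prop 3.5} step by step via Lemma \ref{AlmostSplittrivialmonomorphisms} and the classification of projectives in $\CS(\CM)$ --- is exactly what the paper intends (it records no proof, only that the dual arguments apply), and your first step and part $(3)$ are fine modulo one sentence: you should justify that $j_0$ \emph{is} the injective envelope (resp.\ minimal left $\prj\La$-approximation) of $M$, which follows from the indecomposability of $\left(\begin{smallmatrix}M\\ I\end{smallmatrix}\right)_{j_0}$ (itself forced by $M$, $N$ indecomposable and $N$ non-injective). The middle step, however, does not hang together as written. First, Lemma \ref{AlmostSplittrivialmonomorphisms}(3) applied to $0\rt N\st{j_1}\rt I'\rt W\rt 0$ computes the almost split sequence \emph{starting at} $\left(\begin{smallmatrix}0\\ N\end{smallmatrix}\right)_0$, not the sequence $\xi$ starting at $\left(\begin{smallmatrix}M\\ I\end{smallmatrix}\right)_{j_0}$ (that one ends at $\left(\begin{smallmatrix}N\\ N\end{smallmatrix}\right)_1$ and is the sequence you already identified in your first step). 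Second, the middle term Lemma (3) produces is $\left(\begin{smallmatrix}\Omega_{\La}(W)\\ N\oplus P\end{smallmatrix}\right)_h$ where $p:P\rt W$ is the \emph{projective cover} of $W$; writing $I'$ in place of $P$ presupposes that $I'$ is projective, which is precisely assertion $(1)$, so your argument is circular at this point. Third, the summand that splits off from this middle term is $\left(\begin{smallmatrix}N\\ N\end{smallmatrix}\right)_1$ (via the irreducible map $\left(\begin{smallmatrix}0\\ N\end{smallmatrix}\right)_0\rt \left(\begin{smallmatrix}N\\ N\end{smallmatrix}\right)_1$ supplied by the first sequence), not $\left(\begin{smallmatrix}0\\ N\end{smallmatrix}\right)_0$, which is an end term of this sequence and cannot be a summand of its middle; the complement is $\left(\begin{smallmatrix}X\\ P\end{smallmatrix}\right)_g$ with $\Omega_{\La}(W)\cong N\oplus X$, not $\left(\begin{smallmatrix}\Omega_{\La}(W)\\ I'\end{smallmatrix}\right)$.

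The repaired dualization runs: the complement $\left(\begin{smallmatrix}X\\ P\end{smallmatrix}\right)_g$ is projective in $\CS(\CM)$ because the middle of the almost split sequence ending at $\left(\begin{smallmatrix}0\\ N\end{smallmatrix}\right)_0$ is the indecomposable $\left(\begin{smallmatrix}M\\ I\end{smallmatrix}\right)_{j_0}$, so $\left(\begin{smallmatrix}N\\ N\end{smallmatrix}\right)_1=\tau^{-1}\left(\begin{smallmatrix}M\\ I\end{smallmatrix}\right)_{j_0}$ already exhausts the non-projective summands; excluding $\left(\begin{smallmatrix}P'\\ P'\end{smallmatrix}\right)_1$-type summands via the sink map $\left(\begin{smallmatrix}\text{rad}P'\\ P'\end{smallmatrix}\right)\rt \left(\begin{smallmatrix}P'\\ P'\end{smallmatrix}\right)_1$ (which would force $N\cong P'$ projective, impossible) leaves $\left(\begin{smallmatrix}0\\ P\end{smallmatrix}\right)_0$, whence $\Omega_{\La}(W)\cong N$. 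Even then $(1)$ is not yet proved: you must still identify $P$ with $I'$. In Proposition \ref{Prop 3.5} this came for free because Lemma (2)'s middle term carries $Q$ on top, so ``$e'$ is an isomorphism'' directly gives $Q\cong I$; here Lemma (3)'s middle term carries the projective cover $P$, and $I'$ enters only as $\text{Cok}(h)$. One line closes the gap: by the pull-back diagram of Lemma \ref{AlmostSplittrivialmonomorphisms}(3), $\text{Cok}(h)\cong I'$, while after the decomposition $h$ is a split monomorphism onto the summand $N$, so $\text{Cok}(h)\cong P$; hence $I'\cong P$ is projective and the right-hand term is $\left(\begin{smallmatrix}N\\ I'\end{smallmatrix}\right)_{j_1}$. (Alternatively: $\tau\left(\begin{smallmatrix}N\\ I'\end{smallmatrix}\right)_{j_1}$ must be a non-injective indecomposable summand of the middle $\left(\begin{smallmatrix}I\\ I\end{smallmatrix}\right)_1\oplus\left(\begin{smallmatrix}0\\ N\end{smallmatrix}\right)_0$ of the sequence ending at $\left(\begin{smallmatrix}N\\ N\end{smallmatrix}\right)_1$, and the only such summand is $\left(\begin{smallmatrix}0\\ N\end{smallmatrix}\right)_0$.) Without this identification, assertion $(1)$ and the second displayed sequence are not established.
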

We establish the following construction by specializing the above last two propositions
for when $\CM=\text{Gprj}\mbox{-}\La.$
\begin{construction}\label{Construction 4.8}
Starting from an $N \in \text{Gprj}\mbox{-}\La$ being  indecomposable non-projective, so as stated in Proposition \ref{Prop 3.5}, and then using the proposition   we get the following full sub-quiver of $\Gamma_{\CS(\La)}$
 \[
\xymatrix  @R=0.3cm  @C=0.6cm { 
&&&&&&& 0P_1\ar[rddd]&&&\\&&&& & && \colon\ar[rdd]&&&\\&&&&&&&0P_n\ar[dr]& &&	\\&&   &&VV\ar[dr]\ar@{.}[rr]&	&0M\ar[dr]\ar[ru]\ar[ruu]\ar[ruuu]\ar@{.}[rr]&&MP\ar[dr]&&&\\&&  &0V\ar[dr]\ar[ru]\ar[rddd]\ar[rdd]\ar@{.}[rr]&&
	VQ\ar[dr]\ar@{.}[rr]\ar[ru]\ar[rdd]\ar[rddd]&&MM\ar[ru]\ar@{.}[rr]&&0N&&\\
&&	&&0Q_1\ar[ur]&	&Q_1Q_1\ar[ru]&&&&&\\&&&&\colon\ar[ruu]&& \colon\ar[ruu]&&&&\\&&&& 0Q_m\ar[ruuu]&&Q_rQ_r\ar[ruuu]&&&&}
\]
But $m=r$, since $Q=\oplus^m_1Q_i= \oplus^r_1Q_i$. Iterating the same construction for $\left(\begin{smallmatrix} 0\\ V\end{smallmatrix}\right)_{0}$ and so on, we obtain the full sub-quiver of $\Gamma_{\CS(\rm{Gproj}\mbox{-}\La)}$ containing all  $\tau^i_{\CG}\left(\begin{smallmatrix} 0\\ N\end{smallmatrix}\right)_{0}$ for $i\geq 0$. 
 
Following the inverse construction, but this turn by applying Proposition \ref{Propo 3.7}, we get the following sub-quiver of $\Gamma_{\CS(\rm{Gproj}\mbox{-}\La)}$ for $M$ 
 
	\[
\xymatrix  @R=0.3cm  @C=0.6cm { 
	&&&&& P_1P_1\ar[rddd]&&&&&&\\	&&&&& \colon\ar[rdd]&&&&&&\\	&&&&&P_sP_s\ar[dr]& &&&&&	\\   &&&	&MP\ar[dr]\ar[ru]\ar[ruu]\ar[ruuu]\ar@{.}[rr]&&NN\ar[dr]&&0W\ar[rd]&&&&\\  &&&
	MM\ar@{.}[rr]\ar[ru]&&0N\ar[ru]\ar@{.}[rr]\ar[dr]\ar[rdd]\ar[rddd]&&NI'\ar[ru]\ar[rd]\ar[rdd]\ar[rddd]\ar@{.}[rr]&&WW&&&\\
	&&&	&&&0I'_1\ar[ur]&&I'_1I'_1\ar[ur]&&&&\\&&&&& &\colon\ar[ruu] &&\colon\ar[ruu]&&&\\&&&& &&0I'_u \ar[ruuu]&&I'I'_d \ar[ruuu]&&&}
\]
But $u=d$, since $I'=\oplus^u_1I'_i= \oplus^d_1I'_i$. Iterating the same construction for  $\left(\begin{smallmatrix} W\\ W\end{smallmatrix}\right)_{1}$ and so on, we find the full sub-quiver of $\Gamma_{\CS(\rm{Gproj}\mbox{-}\La)}$ containing  all $\tau^{i}_{\CG} \left(\begin{smallmatrix} 0\\ N\end{smallmatrix}\right)_{0}$ for $i\leqslant 0$. Having both above constructions together give us the part of the competent of  $\Gamma_{\CS(\rm{Gproj}\mbox{-}\La)}$ containing the $\tau_{\CG}$-orbit of $\left(\begin{smallmatrix} 0\\ N\end{smallmatrix}\right)_{0}$.  By deleting projective-injective vertices we obtain that the following component of the stable Auslander-Reiten quiver $\Gamma^s_{\CS(\rm{Gprj}\mbox{-}\La)}$: 
  \[
 \xymatrix  @R=0.3cm  @C=0.6cm { 
 	\cdots&&0M\ar[rd]\ar@{.}[rr]&	&MP\ar[dr]\ar@{.}[rr]&&NN\ar[dr]&&\cdots&\\&&&
 	MM\ar@{.}[rr]\ar[ru]&&0N\ar[ru]\ar@{.}[rr]&&NI'&&}
 \]
which stops in both left and right sides as $\La$ is CM-finite, by Proposition \ref{CM-finite}. Thus the underlying graph of such a component is a cycle. For instance, assume  the module $N$, as stated in Proposition \ref{Prop 3.5},  is of  $\tau_{\CG}$-period 2, i.e., $\tau^2_{\CG}N=N$, or equivalently of $\Omega_{\CG}$-period 2. Hence we have $V=N$. Based on our construction  the  corresponding components of $\Gamma^s_{\CS(\rm{Gprj}\mbox{-}\La)}$ containing $\left(\begin{smallmatrix} 0\\ N\end{smallmatrix}\right)_{0}$ is of the following form

\[
\xymatrix@R1em@C.5em{&0N\ar@{<-}[dl]\ar@{->}[dr]\ar@{.}[ldd]\\ MP\ar@{<-}[d]\ar@{.}[rdd]&&NN\ar@{->}[d]\\ MM\ar@{<-}[dr]\ar@{.}[rr]&&NQ\ar@{->}[dl]\ar@{.}[luu]\\ &0M\ar@{.}[ruu]}\ \ \ 
\]

\end{construction}

 Let us formulate the above observation  for the case that $\La$ is an  $\Omega_{\CG}$-algebra.
 
\begin{definition} Denote $Z_n$ a basic $n$-cycle with the vertex set $\{1, 2,\cdots, n \}$
 and the arrow set $\{\alpha_1, \alpha_2, \cdots, \alpha_n\}$, where $s(\alpha_i)=i$  and $t(\alpha_i)=i+1$ for each $1\leqslant i \leqslant n$.  Here, we identify $n+1$ with 1. Let $I$ denote the arrow ideal of the path algebra  $kZ_n$ over a field $k$. We recall that $I$ generated by all arrows in $Z_n.$ Let $A_n=kZ_n/I^2$ be the corresponding quadratic  monomial algebra. The symbol $kZ_n$ denotes also the path category of $Z_n$ and whose ideal arrow is defined similarly.
 \end{definition}
 \begin{theorem}\label{Theorem 4.10}
 Let $\La$ be an $\Omega_{\CG}$-algebra. The following assertions hold.
 	
 \begin{itemize}
 	
 	\item [$(1)$] Any indecomposable object in $\CS(\rm{Gprj}\mbox{-}\La)$ is isomorphic to either $\left(\begin{smallmatrix} 0\\ G\end{smallmatrix}\right)_0$  or $\left(\begin{smallmatrix} G\\ G\end{smallmatrix}\right)_1$ or $\left(\begin{smallmatrix} \Omega_{\La}(G)\\ P\end{smallmatrix}\right)_i$, where $i$ the kerenl of the projective cover $P\rt G$, for some indecomposable non-projective Gorenstein projective module $G$. In particular, $\CS(\rm{Gprj}\mbox{-}\La)$ is of finite representation type, or equivalently $T_2(\La)$ is CM-finite.
 	\item [$(2)$] Let $G$ be an indecomposable non-projective module in $\text{Gprj}\mbox{-}\La$. Assume $n$ is the least positive integer such that $\Omega^n_{\La}(G)\simeq G$. Consider the following exact sequence  obtained  from the minimal projective resolution of $G$
 	$$0 \rt \Omega^n_{\La}(G)\st{p_n}\rt P^{n-1}\st{p_{n-1}}\rt \cdots\rt P_1\st{p_1}\rt P^0\st{p_0}\rt G\rt 0,$$
 and  for $ 0 \leqslant i \leqslant n-1$ denote $\epsilon_i: 0\rt K^i\st{j_i}\rt P^i\rt K^{i-1}\rt 0$ the induced short exact sequences by the above exact sequence, where $K^i$ is the kernel of $p_i$ (so by our convention $K^{n-1}=\Omega_{\La}(G)$ and set $K^{-1}=G$). Then the vertices of the component $\Gamma^s_{G}$ containing the vertex $\left(\begin{smallmatrix} 0\\ N\end{smallmatrix}\right)_{0}$ are corresponded to the following indecomposable objects 
 
   $$\left(\begin{smallmatrix} 0\\ K^i\end{smallmatrix}\right)_{0},  \ \ 
  \left(\begin{smallmatrix} K^i\\ K^i\end{smallmatrix}\right)_{1}, \ \ 
 \left(\begin{smallmatrix} K^i\\ P^i\end{smallmatrix}\right)_{j_i}$$ 	
 	where $i$ runs through $0\leqslant i \leqslant n-1.$ In particular, the component $\Gamma^s_{G}$ of $\Gamma^s_{\CS(\rm{Gprj}\mbox{-}\La)}$ has $3n$ vertices. 
 Further, the stable Auslander-Reiten quiver $\Gamma^s_{\CS(\rm{Gprj}\mbox{-}\La)}$ is a disjoint union of the all  components $\Gamma^s_G$, where $G$ belong to a  fixed set of   representatives of the all equivalence classes in $\CC(\La)$. In particular, there is a bijection between the components of $\Gamma^s_{\CS(\rm{Gprj}\mbox{-}\La)}$ and the elements of $\CC(\La)$.
 	\item[$(3)$] Assume  $\La$ is a finite dimensional algebra over an algebraic closed filed $k$ of  	characteristic different from two. Let $\Gamma^s_{G_1}, \cdots, \Gamma^s_{G_n}$ denote all the components, and denote by  $d_i$ the number of vertices in $\Gamma^s_{G_i}$ for each $1\leqslant i \leqslant n$.
 	  Then there is an (additive) equivalence  	
 	 $$\underline{\CS(\rm{Gprj}\mbox{-}\La)}\simeq kZ_{d_1}/I^2_1 \times \cdots \times kZ_{d_n}/I^2_n,$$
 	where $KZ_{d_i}/I^2_i$ is the the quotient category of the  path category $kZ_n$ modulo the ideal $I^2_i$, the square of the arrow ideal $I_i$, indeed,   generated by all paths  of length $\geqslant 2$. Moreover, if $\La$ is Gorenstein, then  $\mathbb{D}_{\text{sg}}(T_2(\La))\simeq  kZ_{d_1}/I^2_1 \times \cdots \times kZ_{d_n}/I^2_n$ (as additive categories).
 \end{itemize}
 
 \end{theorem}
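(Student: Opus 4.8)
The plan is to obtain the three assertions in turn, bootstrapping from Lemma~\ref{Lemma1} and Proposition~\ref{CM-finite}, the explicit almost split sequences in Lemma~\ref{AlmostSplittrivialmonomorphisms} and Propositions~\ref{Prop 3.5}--\ref{Propo 3.7}, and Construction~\ref{Construction 4.8}. For part~$(1)$, I would note that any object $\left(\begin{smallmatrix}A\\B\end{smallmatrix}\right)_f$ of $\CS(\text{Gprj}\mbox{-}\La)$ yields a short exact sequence $0\rt A\rt B\rt \text{Cok}(f)\rt 0$ all of whose terms are Gorenstein projective, so by Lemma~\ref{Lemma1} it is a direct sum of the standard sequences $0\rt\Omega_{\La}(A')\rt P'\rt A'\rt 0$, $0\rt 0\rt B'\rt B'\rt 0$ and $0\rt C'\rt C'\rt 0\rt 0$; the induced direct sum decomposition of $\left(\begin{smallmatrix}A\\B\end{smallmatrix}\right)_f$ in $\text{H}(\La)$ then shows every indecomposable object of $\CS(\text{Gprj}\mbox{-}\La)$ is, up to the projective--injective summands $\left(\begin{smallmatrix}0\\P\end{smallmatrix}\right)_0$ and $\left(\begin{smallmatrix}P\\P\end{smallmatrix}\right)_1$, one of the three listed forms. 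Since $\La$ is CM-finite by Proposition~\ref{CM-finite}, there are only finitely many such indecomposables, so $\CS(\text{Gprj}\mbox{-}\La)\simeq\text{Gprj}\mbox{-}T_2(\La)$ is of finite type and $T_2(\La)$ is CM-finite.

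For part~$(2)$, the point is that over an $\Omega_{\CG}$-algebra every almost split sequence in $\text{Gprj}\mbox{-}\La$ ending at an indecomposable non-projective arises from a projective cover (Lemma~\ref{Lemma1}), so Propositions~\ref{Prop 3.5} and~\ref{Propo 3.7} apply to every indecomposable non-projective and $\tau_{\CG}=\Omega_{\La}$ by Proposition~\ref{Omega-algebra}$(ii)$. I would run Construction~\ref{Construction 4.8} starting from $\left(\begin{smallmatrix}0\\G\end{smallmatrix}\right)_0$: since $\Omega^n_{\La}(G)\simeq G$ for $n=l(G)$ and $\Omega_{\La}$ permutes the indecomposable non-projective Gorenstein projectives, the $\tau_{\CG}$-orbit closes after $n$ steps, and the resulting component $\Gamma^s_G$ collects exactly the $3n$ vertices $\left(\begin{smallmatrix}0\\K^i\end{smallmatrix}\right)_0$, $\left(\begin{smallmatrix}K^i\\K^i\end{smallmatrix}\right)_1$, $\left(\begin{smallmatrix}K^i\\P^i\end{smallmatrix}\right)_{j_i}$ with $K^i=\Omega^{i+1}_{\La}(G)$ and $0\leqslant i\leqslant n-1$; these are pairwise non-isomorphic and none is projective--injective, so all $3n$ survive in the stable quiver. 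Reading off the middle terms in Propositions~\ref{Prop 3.5}$(2)$, \ref{Propo 3.7}$(2)$ and Lemma~\ref{AlmostSplittrivialmonomorphisms} (each having a single non-projective indecomposable summand, of multiplicity one) and using that $\tau_{\CG}$ is a quiver automorphism, one sees every vertex of $\Gamma^s_G$ has exactly one incoming and one outgoing arrow, so the connected quiver $\Gamma^s_G$ is the oriented cycle $Z_{3n}$. Finally part~$(1)$ forces every indecomposable of $\CS(\text{Gprj}\mbox{-}\La)$ into some $\Gamma^s_G$, distinct $\sim$-classes give disjoint components, and the $l(G)$ modules of $[G]$ all occur in $\Gamma^s_G$, which yields the disjoint-union description and the bijection with $\CC(\La)$.

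For part~$(3)$, from part~$(1)$ the Frobenius category $\text{Gprj}\mbox{-}T_2(\La)\simeq\CS(\text{Gprj}\mbox{-}\La)$ has finite type, so $\underline{\CS(\text{Gprj}\mbox{-}\La)}=\underline{\text{Gprj}}\mbox{-}T_2(\La)$ is an algebraic, $\Hom$-finite, Krull--Schmidt triangulated category whose Auslander--Reiten quiver, by part~$(2)$, is $\bigsqcup_i Z_{d_i}$ with $d_i=3l(G_i)$. Since $k$ is algebraically closed, $\underline{\End}(\underline{X})=k$ for every indecomposable $X$ and each arrow of $Z_{d_i}$ spans a one-dimensional space of irreducible morphisms; moreover the composite of any two consecutive arrows equals, up to a scalar, the composite of the two maps in the corresponding almost split triangle, hence is zero, so all paths of length $\geqslant 2$ vanish. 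When $\mathrm{char}\,k\neq 2$, standardness of representation-finite self-injective (equivalently, Gorenstein-projective) categories over an algebraically closed field then identifies $\underline{\CS(\text{Gprj}\mbox{-}\La)}$ with the mesh category of its Auslander--Reiten quiver, which is precisely $\prod_i kZ_{d_i}/I^2_i$ (the mesh category of the cyclic translation quiver $Z_d$, with translation the shift by $-2$, is $kZ_d/I^2$); this parallels \cite[Theorem 5.7]{CSZ} and \cite[Theorem 2.5]{Ka}. For the last statement, $T_2(\La)$ is Gorenstein whenever $\La$ is, so Buchweitz's equivalence gives $\mathbb{D}_{\text{sg}}(T_2(\La))\simeq\underline{\text{Gprj}}\mbox{-}T_2(\La)\simeq\prod_i kZ_{d_i}/I^2_i$ as additive categories.

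I expect the main obstacle to be the standardness step in part~$(3)$: one must make sure the cyclic Auslander--Reiten components genuinely fall under a standardness theorem valid over algebraically closed fields of characteristic $\neq 2$ — this is the only place the characteristic hypothesis enters, exactly as in \cite{CSZ, Ka} — and one must also record the standard fact that $T_2(\La)$ inherits Gorensteinness from $\La$. Parts $(1)$ and $(2)$ are then essentially bookkeeping built on Lemma~\ref{Lemma1}, Propositions~\ref{Prop 3.5}--\ref{Propo 3.7} and Construction~\ref{Construction 4.8}.
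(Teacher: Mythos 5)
Your proposal follows essentially the same route as the paper: part (1) via Lemma \ref{Lemma1} applied to the short exact sequence determined by an object of $\CS(\rm{Gprj}\mbox{-}\La)$ together with CM-finiteness, part (2) by iterating Construction \ref{Construction 4.8} along the minimal projective resolution until the $\Omega_{\La}$-orbit closes, and part (3) by standardness of the cyclic stable components (the only place $\mathrm{char}\,k\neq 2$ enters) plus the Gorensteinness of $T_2(\La)$ and Buchweitz's equivalence. The only cosmetic difference is that you spell out the mesh relations on the cycle $Z_{3n}$ explicitly, which the paper leaves implicit; the argument is correct.
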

\begin{proof}
$(1)$ Assume $\left(\begin{smallmatrix} G\\ G'\end{smallmatrix}\right)_{f}$ is an indecomposable object in $\CS(\rm{Gprj}\mbox{-}\La)$. It induces the short exact sequence $\lambda: 0 \rt G\st{f}\rt G'\rt \text{Cok}(f)\rt 0$ with all terms in $\text{Gprj}\mbox{-}\La.$ But Lemma \ref{Lemma1}  implies that the sequence $\lambda$ is isomorphic to 
the short exact sequences of the  form either $0 \rt \Omega_{\La}(G) \rt P \rt G \rt 0$ or $0 \rt 0 \rt G  \st{1} \rt G \rt 0$ or $0 \rt G \st{1} \rt G \rt 0 \rt 0$ 	for some indecomposable  module $G$ in $\rm{Gprj} \mbox{-} \La.$  Hence we get the first part. The second part follows from the first part and using these two facts that $\Omega_{\CG}$-algebras are $\text{CM}$-finite (by Proposition \ref{CM-finite}) and the operation of taking projective cover gives us only a finite number indecomposable objects  up to isomorphism in $\CS(\rm{Gprj}\mbox{-}\La)$.

$(2)$ Applying Construction \ref{Construction 4.8} for the two short exact sequences $\epsilon_0, \epsilon_1 $ (if $n=1$, applying for $\epsilon_0, \epsilon_0$) we reach the following part of $\Gamma_{\CS(\rm{Gprj}\mbox{-}\La)}$
\[
\xymatrix  @R=0.3cm  @C=0.6cm { 
	&&&&&&& 0P^0_1\ar[rddd]&&&\\&&&& & && \colon\ar[rdd]&&&\\&&&&&&&0P^0_{n_0}\ar[dr]& &&	\\&&   &&K^1K^1\ar[dr]\ar@{.}[rr]&	&0K^0\ar[dr]\ar[ru]\ar[ruu]\ar[ruuu]\ar@{.}[rr]&&K^0P^0\ar[dr]&&&\\&&  &0K^1\ar[dr]\ar[ru]\ar[rddd]\ar[rdd]\ar@{.}[rr]&&
	K^1P^1\ar[dr]\ar@{.}[rr]\ar[ru]\ar[rdd]\ar[rddd]&&K^0K^0\ar[ru]\ar@{.}[rr]&&0G&&\\
	&&	&&0P^1_1\ar[ur]&	&P^1_1P^1_1\ar[ru]&&&&&\\&&&&\colon\ar[ruu]&& \colon\ar[ruu]&&&&\\&&&& 0P^1_{n_1}\ar[ruuu]&&P^1_{n_1}P^1_{n_1}\ar[ruuu]&&&&}
\]

Repeating the same construction for the pair of the short exact sequences $(\epsilon_1, \epsilon_2)$ until to the pair $(\epsilon_{n-2}, \epsilon_{n-1})$, we will obtain $n-1$ full subquivers of $\Gamma_{\CS(\rm{Gprj}\mbox{-}\La)}$ as the above such the one corresponding to $(\epsilon_{n-2}, \epsilon_{n-1})$ has the object $\left(\begin{smallmatrix} 0\\ G\end{smallmatrix}\right)_{0}$ in the leftmost side. Hence the construction will stop at $(n-1)$-th step. By glowing the obtained full subquivers we obtain the full subquiver $\tilde{\Gamma}_G$ of $\Gamma_{\CS(\rm{Gprj}\mbox{-}\La)}$ containing the $\tau_{\CG}$-orbit of $\left(\begin{smallmatrix} 0\\ N\end{smallmatrix}\right)_{0}$.  Moreover, in view of the statement $(1)$ we see that $\Gamma_{\CS(\rm{Gprj}\mbox{-}\La)}$ is a union of all full subquiver (not necessarily disjoint) $\Gamma_{\CS(\rm{Gprj}\mbox{-}\La)}$, where  $G$ runs through a  fixed set of   representatives of the all equivalence classes in $\CC(\La)$.   Be deleting the projective-injective vertices of  the full subquiver $\tilde{\Gamma}_G$ as we obtained in the above, then  we obtain the component $\Gamma_G,$ as presented  in the below

\small{\[
\xymatrix  @R=0.3cm  @C=0.6cm { 
	&K^{n-1}K^{n-1}\ar[rd]\ar@{.}[r]&&&\cdots&	&0K^0\ar[dr]\ar@{.}[rr]&&K^0P^0\ar[dr]&\\0K^{n-1}\ar[ru]\ar@{.}[rr]&&K^{n-1}P^{n-1}&&\cdots&
	K^1P^1\ar@{.}[rr]\ar[ru]&&K^0K^0\ar[ru]\ar@{.}[rr]&&0G}\]}
We have to identify the vertex corresponding to $0K^{n-1}$ with to $0G$ as by our assumption $K^{n-1}=\Omega_{\La}(G)\simeq G.$  Now our construction reveals the all facts in the statement $(2)$.

$(3)$ Our assumption on $\La$ guarantees that the components of $\Gamma^s_{\CS(\rm{Gprj}\mbox{-}\La)}$  must be standard. Hence  $\underline{\CS(\rm{Gprj}\mbox{-}\La)}$ is equivalent to a product of the mesh categories of all the components. On the other hand, the mesh category of the component $\Gamma^s_{G_i}$ is equivalent to the quotient category $kZ_{d_i}/I^2_i$, so the result. When $\La$ is Gorenstein, so is $T_2(\La)$ \cite[Corollary 4.3]{AHKe1}. Then the last part follows from a known result of Buchweitz which implies  $\mathbb{D}_{\rm{sg}}(T_2(\La))\simeq \underline{\CS(\rm{Gprj}\mbox{-}\La)}$. 
\end{proof} 
 
\begin{remark}
 When $\La$ is a  Gorenstein finite dimensional algebra over an algebraic closed filed $k$ of  	characteristic different from two,  as mentioned in the proof of the above theorem $T_2(\La)$ so is. Hence by applying Happel's work, $\underline{\CS(\rm{Gprj}\mbox{-}\La)}$ gets a triangulated structure. Thus in the equivalence given in the third part of the theorem  the left side  is a triangulated category. We deduce by such  an equivalence any path quotient category $kZ_{3n}/I^2$ can be enriched by  a triangulated structure. It might be interesting to study the induced triangulated structure of  $kZ_{3n}/I^2$. Viewing all the quotient categories $KZ_{d_i}/I^2_i$ with the induced triangulated  structure, then the additive equivalence in theorem turns into a triangle equivalence. 
\end{remark}
By our interpretation  in Theorem \ref{Theorem 4.10} for  $\CC(\La)$ in terms of the components of $\Gamma^s_{\CS(\rm{Gprj}\mbox{-}\La)}$ we prove in the next result that the number of the equivalence classes in $\CC(\La)$ is a derived invariant for $\Omega_{\CG}$-algebra. It is a generalization of \cite[Corollary 3.1]{Ka}. 
\begin{corollary}
	Let $\La$ and $\La'$ be $\Omega_{\CG}$-algebras (not necessarily finite dimensional algebras). If there is an equivalence of triangulated
	categories $\mathbb{D}^{\rm{b}}(\mmod \La)\simeq \mathbb{D}^{\rm{b}}(\mmod \La')$, then there is a bijection of sets 
	$$f:\CC(\La)\rt \CC(\La')$$
	such that for each  indecomposable non-projective  module $G \in \text{Gprj}\mbox{-}\La,$ $l(G)=l(G')$, where $G' \in \text{Gprj}\mbox{-}\La'$ and $[G']=f([G])$.
\end{corollary}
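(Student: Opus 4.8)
The plan is to transport the given derived equivalence down to the stable categories of Gorenstein projective modules and then argue entirely in terms of $\Omega$-orbits. First I would invoke Remark \ref{Remark 2.8}(4), i.e. \cite[Theorem 3.24]{AHV}: a triangle equivalence $\mathbb{D}^{\rm{b}}(\mmod \La)\simeq\mathbb{D}^{\rm{b}}(\mmod \La')$ induces a triangle equivalence $\Theta\colon\SGprj\La\xrightarrow{\sim}\SGprj\La'$. Since $\Gprj\La$ is a Frobenius category whose stable suspension functor is the cosyzygy, hence the quasi-inverse of $\Omega_{\La}$, and since every triangle functor commutes with suspension up to natural isomorphism, this yields a natural isomorphism $\Theta\circ\Omega_{\La}\simeq\Omega_{\La'}\circ\Theta$. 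This compatibility is the engine of the whole argument.

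Next I would set up the dictionary between the combinatorial data and the stable category. Recall that $\CC(\La)$ is, by definition, the set of orbits of iso-classes of indecomposable non-projective Gorenstein projective $\La$-modules under the cyclic group generated by $\Omega_{\La}$ (with $\Omega^n_{\La}$ for $n<0$ read as cosyzygies, i.e. as the quasi-inverse of $\Omega_{\La}$ inside $\SGprj\La$), and that $l(G)$ is the period of the orbit of $G$. Because $\Gprj\La$ is Krull--Schmidt, for an indecomposable non-projective $G$ the stable endomorphism ring $\underline{\End}_{\La}(\underline{G})$ is a nonzero quotient of the local ring $\End_{\La}(G)$, hence local, so $\underline{G}$ is indecomposable in $\SGprj\La$; conversely every indecomposable object of $\SGprj\La$ has this form, and $\underline{G}\simeq\underline{H}$ forces $G\simeq H$ for indecomposable non-projective $G,H$. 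Thus $\Theta$ and its quasi-inverse induce mutually inverse bijections between the iso-classes of indecomposable non-projective modules over $\La$ and over $\La'$; call this bijection $\theta$. By the isomorphism above, $\theta(\Omega_{\La}(G))\simeq\Omega_{\La'}(\theta(G))$, so $\theta$ sends $\Omega_{\La}$-orbits to $\Omega_{\La'}$-orbits and descends to a bijection $f\colon\CC(\La)\to\CC(\La')$, $f([G])=[\theta(G)]$. (By Proposition \ref{CM-finite} both sets are finite, although this plays no role.)

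It then remains to check that $f$ preserves periods, which I would do by symmetry. If $\Omega^n_{\La}(G)\simeq G$, then $\Omega^n_{\La'}(\theta(G))\simeq\theta(\Omega^n_{\La}(G))\simeq\theta(G)$, so the period $l(\theta(G))$ divides $n$; taking $n=l(G)$ gives $l(\theta(G))\mid l(G)$, and running the same argument for $\Theta^{-1}$ at $\theta(G)$ (whose image is $G$) gives $l(G)\mid l(\theta(G))$. Hence $l(G)=l(\theta(G))=l(G')$ for any representative $G'$ of $f([G])$, recalling that $l$ is constant on equivalence classes; this is exactly the asserted property of $f$.

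The step I expect to carry all the weight is the first one — the passage from a derived equivalence of the algebras to a triangle equivalence $\SGprj\La\simeq\SGprj\La'$ — but since this is precisely \cite[Theorem 3.24]{AHV}, no new work is needed, and everything downstream is formal orbit bookkeeping together with the elementary remarks about indecomposables in a Krull--Schmidt Frobenius category. For an alternative route one could instead combine the identification of $\CC(\La)$ with the set of components of $\Gamma^s_{\CS(\Gprj\La)}$ from Theorem \ref{Theorem 4.10} with the fact that a derived equivalence between $\La$ and $\La'$ induces one between $T_2(\La)$ and $T_2(\La')$; however, the direct argument above is cleaner and, in particular, does not force $\La$ to be a finite-dimensional algebra over a field.
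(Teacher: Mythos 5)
Your proof is correct, and it takes a genuinely different route from the paper's. The paper does not argue inside $\underline{\rm{Gprj}}\mbox{-}\La$ at all: it first upgrades the derived equivalence to one between the triangular matrix algebras, $\mathbb{D}^{\rm{b}}(\mmod T_2(\La))\simeq \mathbb{D}^{\rm{b}}(\mmod T_2(\La'))$, via \cite[Theorem 8.5]{A}, then applies the result of \cite{AHV} to $T_2(\La)$ and $T_2(\La')$ to obtain $\underline{\CS(\rm{Gprj}\mbox{-}\La)}\simeq \underline{\CS(\rm{Gprj}\mbox{-}\La')}$, and finally reads off both the bijection and the periods from Theorem \ref{Theorem 4.10}, where the components of $\Gamma^s_{\CS(\rm{Gprj}\mbox{-}\La)}$ are shown to be in bijection with $\CC(\La)$ and the component attached to $[G]$ to have exactly $3l(G)$ vertices. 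Your argument stays at the level of $\underline{\rm{Gprj}}\mbox{-}\La$ itself: the induced triangle equivalence commutes with the suspension, hence with $\Omega_{\La}$, indecomposables of the stable category are exactly the indecomposable non-projective Gorenstein projectives, and the orbit and period statements are then formal divisibility bookkeeping. What your route buys is economy and, arguably, correctness of hypotheses: it uses only the general form of the AHV theorem quoted in Remark \ref{Remark 2.8}(4), whereas the paper's proof quotes \cite[Theorem 4.1.2]{AHV} in a form stated for \emph{Gorenstein} algebras and applies it to $T_2(\La)$, even though the corollary imposes no Gorenstein hypothesis on $\La$ (and $T_2(\La)$ is Gorenstein only when $\La$ is). The one point you should make explicit is that the equivalence $\underline{\rm{Gprj}}\mbox{-}\La\simeq\underline{\rm{Gprj}}\mbox{-}\La'$ furnished by \cite{AHV} is indeed a \emph{triangle} equivalence (Remark \ref{Remark 2.8}(4) records it only as an equivalence of categories), since compatibility with $\Omega$ is the engine of your whole argument; granting that, everything else is sound. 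What the paper's longer route buys is that it exhibits the invariance of $\CC(\La)$ as a direct consequence of the Auslander--Reiten-quiver description of the monomorphism category developed in Section \ref{Section 3}.
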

\begin{proof}
	In view of \cite[Theorem 8.5]{A}   $\mathbb{D}^{\rm{b}}(\mmod \La)\simeq \mathbb{D}^{\rm{b}}(\mmod \La')$ gives the derived equivalence $\mathbb{D}^{\rm{b}}(\mmod T_2(\La))\simeq \mathbb{D}^{\rm{b}}(\mmod T_2(\La'))$. In fact, we specialize \cite[Theorem 8.5]{A} for the case that the quiver is $\bullet\rt \bullet.$ By \cite[Theorem 4.1.2]{AHV}, stating that if two Gorenstein  Artin algebras $A$ and $A'$ are derived equivalent, then there is a triangulated equivalence $\text{Gprj}\mbox{-}A\simeq \text{Gprj}\mbox{-}A'$, then we obtain
	$\underline{\CS(\rm{Gprj}\mbox{-}A)}\simeq \underline{\CS(\rm{Gprj}\mbox{-}A')}$ by applying the theorem of \cite{AHV} for $T_2(\La)$ and $T_2(\La')$. Now Theorem \ref{Theorem 4.10} completes the proof.
\end{proof}

\section{The morphism category over $\Omega_{\mathcal{G}}$-algebras}\label{Section 4}
 This section is devoted to study of the morphism categories $\text{H}(\text{Gprj}\mbox{-}\La)$ over $\Omega_{\CG}$-algebras. We will compute some certain almost split sequences in $\text{H}(\text{Gprj}\mbox{-}\La)$. It seems in contrast to $\CS(\text{Gprj}\mbox{-}\La)$ it is difficult to compute all the almost split sequences. However, in the end of section for a very special case of $\Omega_{\CG}$-algebras we will completely compute all the almost split sequences.
\begin{proposition}
	Let $\La$ be an Artin algebra such that the  subcategory of Gorenstein projective modules $\rm{Gprj}\mbox{-}\La$ is contravariantly finite. Then $\rm{H}(\rm{Gprj}\mbox{-}\La)$ is functorially finite in $\rm{H}(\La).$ In particular, $\rm{H}(\rm{Gprj}\mbox{-}\La)$ has almost split sequences.
\end{proposition}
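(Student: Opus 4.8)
The plan is to follow the proof of the preceding proposition for $\CS(\rm{Gprj}\mbox{-}\La)$. First I would observe that $\rm{H}(\rm{Gprj}\mbox{-}\La)$ is a resolving subcategory of $\rm{H}(\La)\simeq\mmod T_2(\La)$: the indecomposable projectives of $\rm{H}(\La)$ are the objects $\left(\begin{smallmatrix}P\\ P\end{smallmatrix}\right)_1$ and $\left(\begin{smallmatrix}0\\ Q\end{smallmatrix}\right)_0$ with $P,Q\in\rm{prj}\mbox{-}\La$, so they lie in $\rm{H}(\rm{Gprj}\mbox{-}\La)$, and since exactness in $\rm{H}(\La)$ is componentwise, closure of $\rm{H}(\rm{Gprj}\mbox{-}\La)$ under extensions and under kernels of epimorphisms follows from the resolving property of $\rm{Gprj}\mbox{-}\La$ in $\mmod\La$. (Unlike $\CS(\rm{Gprj}\mbox{-}\La)$, this subcategory is not the category of Gorenstein projective $T_2(\La)$-modules, but that is irrelevant here.) Hence, by \cite[Corollary 0.3]{KS}, it suffices to prove that $\rm{H}(\rm{Gprj}\mbox{-}\La)$ is contravariantly finite in $\rm{H}(\La)$, and the existence of almost split sequences then follows from \cite{AS}.

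For contravariant finiteness I would, given $\left(\begin{smallmatrix}M_1\\ M_2\end{smallmatrix}\right)_f$ in $\rm{H}(\La)$, construct a short exact sequence
$$0\to \left(\begin{smallmatrix}K_3\\ K_2\end{smallmatrix}\right)_v\to \left(\begin{smallmatrix}G_1\\ G_2\end{smallmatrix}\right)_{w}\to \left(\begin{smallmatrix}M_1\\ M_2\end{smallmatrix}\right)_f\to 0$$
with middle term in $\rm{H}(\rm{Gprj}\mbox{-}\La)$ and with the left-hand term in $\rm{H}(\rm{Gprj}\mbox{-}\La)^{\perp}$; then, exactly as in the previous proof, the displayed epimorphism is the required right approximation. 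To build it, let $g_2\colon G_2\to M_2$ be a minimal right $\rm{Gprj}\mbox{-}\La$-approximation of $M_2$; since $\rm{prj}\mbox{-}\La\subseteq\rm{Gprj}\mbox{-}\La$ it is an epimorphism, and by Wakamutsu's Lemma $K_2:=\Ker(g_2)\in\rm{Gprj}\mbox{-}\La^{\perp}$. Form the pullback
$$\xymatrix{N_1\ar[r]^{b}\ar[d]_{a} & G_2\ar[d]^{g_2}\\ M_1\ar[r]^{f} & M_2,}$$
so $a$ is an epimorphism (a pullback of $g_2$) with $\Ker(a)\simeq K_2$; then choose a minimal right $\rm{Gprj}\mbox{-}\La$-approximation $g_1\colon G_1\to N_1$, again an epimorphism with $K_1:=\Ker(g_1)\in\rm{Gprj}\mbox{-}\La^{\perp}$. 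Put $w:=b g_1$; the pair $\left(\begin{smallmatrix}a g_1\\ g_2\end{smallmatrix}\right)$ is a morphism $\left(\begin{smallmatrix}G_1\\ G_2\end{smallmatrix}\right)_{w}\to\left(\begin{smallmatrix}M_1\\ M_2\end{smallmatrix}\right)_f$ in $\rm{H}(\La)$ (commutativity follows from $fa=g_2b$), it is an epimorphism, its source lies in $\rm{H}(\rm{Gprj}\mbox{-}\La)$, and its kernel is $\left(\begin{smallmatrix}K_3\\ K_2\end{smallmatrix}\right)_v$ with $K_3:=\Ker(a g_1)$ and $v\colon K_3\to K_2$ the map induced by $w$. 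A short diagram chase on the pullback shows that $v$ is an epimorphism with $\Ker(v)\simeq K_1$, so $0\to K_1\to K_3\to K_2\to 0$ is exact and $K_3\in\rm{Gprj}\mbox{-}\La^{\perp}$ as well.

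It remains to verify $\left(\begin{smallmatrix}K_3\\ K_2\end{smallmatrix}\right)_v\in\rm{H}(\rm{Gprj}\mbox{-}\La)^{\perp}$. Every object $\left(\begin{smallmatrix}A\\ B\end{smallmatrix}\right)_e$ of $\rm{H}(\La)$ fits in the canonical short exact sequence $0\to\left(\begin{smallmatrix}0\\ B\end{smallmatrix}\right)_0\to\left(\begin{smallmatrix}A\\ B\end{smallmatrix}\right)_e\to\left(\begin{smallmatrix}A\\ 0\end{smallmatrix}\right)_0\to 0$, so by the associated long exact $\Ext$-sequence the required vanishing reduces to $\Ext^1_{\rm{H}(\La)}(\left(\begin{smallmatrix}0\\ G\end{smallmatrix}\right)_0,\left(\begin{smallmatrix}K_3\\ K_2\end{smallmatrix}\right)_v)=0$ and $\Ext^1_{\rm{H}(\La)}(\left(\begin{smallmatrix}G\\ 0\end{smallmatrix}\right)_0,\left(\begin{smallmatrix}K_3\\ K_2\end{smallmatrix}\right)_v)=0$ for $G\in\rm{Gprj}\mbox{-}\La$. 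Now $G\mapsto\left(\begin{smallmatrix}0\\ G\end{smallmatrix}\right)_0$ and $G\mapsto\left(\begin{smallmatrix}G\\ G\end{smallmatrix}\right)_1$ are exact left adjoints of the exact functors $\left(\begin{smallmatrix}A\\ B\end{smallmatrix}\right)_f\mapsto B$ and $\left(\begin{smallmatrix}A\\ B\end{smallmatrix}\right)_f\mapsto A$, whence $\Ext^i_{\rm{H}(\La)}(\left(\begin{smallmatrix}0\\ G\end{smallmatrix}\right)_0,Y)\simeq\Ext^i_{\La}(G,Y_2)$ and $\Ext^i_{\rm{H}(\La)}(\left(\begin{smallmatrix}G\\ G\end{smallmatrix}\right)_1,Y)\simeq\Ext^i_{\La}(G,Y_1)$ for $Y=\left(\begin{smallmatrix}Y_1\\ Y_2\end{smallmatrix}\right)_\varphi$; the first vanishing above is then $\Ext^1_{\La}(G,K_2)=0$ (true since $K_2\in\rm{Gprj}\mbox{-}\La^{\perp}$), while for the second, applying $\Hom_{\rm{H}(\La)}(-,\left(\begin{smallmatrix}K_3\\ K_2\end{smallmatrix}\right)_v)$ to $0\to\left(\begin{smallmatrix}0\\ G\end{smallmatrix}\right)_0\to\left(\begin{smallmatrix}G\\ G\end{smallmatrix}\right)_1\to\left(\begin{smallmatrix}G\\ 0\end{smallmatrix}\right)_0\to 0$ and using $\Ext^1_{\La}(G,K_3)=0$ identifies $\Ext^1_{\rm{H}(\La)}(\left(\begin{smallmatrix}G\\ 0\end{smallmatrix}\right)_0,\left(\begin{smallmatrix}K_3\\ K_2\end{smallmatrix}\right)_v)$ with $\Coker\big(\Hom_{\La}(G,K_3)\xrightarrow{\Hom_{\La}(G,v)}\Hom_{\La}(G,K_2)\big)$, which vanishes because $v$ is an epimorphism with $\Ker(v)\simeq K_1\in\rm{Gprj}\mbox{-}\La^{\perp}$, so $\Hom_{\La}(G,v)$ is surjective for $G\in\rm{Gprj}\mbox{-}\La$. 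This gives the orthogonality, hence the claim.

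I expect the only real difficulty to be making the approximations compatible with the structure map $f$: independently approximating $M_1$ and $M_2$ does not produce a morphism in $\rm{H}(\La)$, because a right $\rm{Gprj}\mbox{-}\La$-approximation of $M_2$ need not be monic. The pullback is exactly the device that fixes this, replacing $M_1$ by $N_1$ (which already contains a copy of $\Ker(g_2)$), so that a single further approximation at $N_1$ assembles into a genuine morphism in $\rm{H}(\La)$ whose error term is automatically $\Ext$-orthogonal to $\rm{H}(\rm{Gprj}\mbox{-}\La)$. All remaining verifications — epimorphicity of $a$ and $v$, the identifications $\Ker(a)\simeq K_2$ and $\Ker(v)\simeq K_1$, and the $\Ext$-computations above — are routine diagram chases.
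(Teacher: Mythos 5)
Your construction of the right approximation is exactly the paper's: take a right $\rm{Gprj}\mbox{-}\La$-approximation of the codomain, pull it back along the structure map $f$, and then approximate the resulting pullback, yielding the same object and the same epimorphism in $\rm{H}(\La)$. The only divergence is in the verification step: the paper appeals directly to the universal property of the pullback to factor an arbitrary incoming map from $\rm{H}(\rm{Gprj}\mbox{-}\La)$, whereas you check $\Ext$-orthogonality of the kernel term via Wakamatsu's Lemma in the style of the paper's proof for $\CS(\rm{Gprj}\mbox{-}\La)$; both verifications are correct and routine.
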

\begin{proof}
	Thanks to \cite{KS} it suffices to show that $\rm{H}(\rm{Gprj}\mbox{-}\La)$ is contravariantly finite in $\rm{H}(\La)$. Assume an arbitrary object $\left(\begin{smallmatrix} X\\ Y\end{smallmatrix}\right)_{f}$ of $\rm{H}(\La)$ is given. Take a right $\rm{Gprj}\mbox{-}\La$-approximation $\alpha: G\rt Y$ in $\mmod\La.$ Let $(H, a, b)$ be a pull-back of $(\alpha, f)$ and $\beta:G'\rt H$ a right $\rm{Gprj}\mbox{-}\La$-approximation of $H$ in $\mmod \La.$	The below diagram contains all  the data we have already defined
\small{	\begin{equation*}
	\begin{tikzcd}
	G' \ar[dotted]{dd}{a\beta} \ar{rr}{b\beta} \drar{\beta} && X \ar{dd}{f} \\
	& H \urar{b} \dlar{a} \\
	G\ar{rr}{\alpha} && Y
	\end{tikzcd}
	\end{equation*}}
	
	By the above diagram we get the morphism $\left(\begin{smallmatrix} b\beta \\ \alpha \end{smallmatrix}\right):\left(\begin{smallmatrix} G' \\ G\end{smallmatrix}\right)_{\alpha\beta}\rt \left(\begin{smallmatrix} X \\ Y\end{smallmatrix}\right)_f$ such that $\left(\begin{smallmatrix} G' \\  G\end{smallmatrix}\right)_{\alpha\beta}$ lies in $\rm{H}(\rm{Gprj}\mbox{-}\La)$. Using the universal property of pull-back, one can easily prove that the obtained morphism in $\rm{H}(\La)$ is a right $\rm{H}(\rm{Gprj}\mbox{-}\La)$-approximation of $\left(\begin{smallmatrix} X \\ Y\end{smallmatrix}\right)_f$ in $\rm{H}(\La)$.	 
\end{proof}
We point out that as one can easily see that the proof of $\rm{H}(\rm{Gprj}\mbox{-}\La)$ being contravariantly finite works to show that $\rm{H}(\mathcal{D})$ is contravariantly finite in $\rm{H}(\La)$, whenever $\mathcal{D}$ is contravariantly finite in $\mmod \La$.

\begin{lemma}\label{Lemma 5.2}	
	Assume  $\delta:  0 \rt  A\st{f} \rt B\st{g} \rt C \rt 0$ is  an almost split sequence in $\CM.$ Then
	\begin{itemize}
		\item[$(1)$] The almost split sequence in $\rm{H}(\CM)$ ending at $\left(\begin{smallmatrix}0\\ C\end{smallmatrix}\right)_0$ has of the form 
		$$\xymatrix@1{  0\ar[r] & {\left(\begin{smallmatrix} A\\ A\end{smallmatrix}\right)}_{1}
			\ar[rr]^-{\left(\begin{smallmatrix} 1 \\ f\end{smallmatrix}\right)}
			& & {\left(\begin{smallmatrix}A\\ B\end{smallmatrix}\right)}_{f}\ar[rr]^-{\left(\begin{smallmatrix} 0 \\ g\end{smallmatrix}\right)}& &
			{\left(\begin{smallmatrix}0\\ C\end{smallmatrix}\right)}_{0}\ar[r]& 0. } \ \    $$		
		
		\item [$(2)$] The almost split sequence in $\rm{H}(\CM)$ ending at $\left(\begin{smallmatrix}C\\ C\end{smallmatrix}\right)_1$ has of  the form 
		
		$$\xymatrix@1{  0\ar[r] & {\left(\begin{smallmatrix} A\\ 0\end{smallmatrix}\right)}_{0}
			\ar[rr]^-{\left(\begin{smallmatrix} f \\ 0 \end{smallmatrix}\right)}
			& & {\left(\begin{smallmatrix} B\\ C\end{smallmatrix}\right)}_{g}\ar[rr]^-{\left(\begin{smallmatrix} g \\ 1\end{smallmatrix}\right)}& &
			{\left(\begin{smallmatrix}C \\ C\end{smallmatrix}\right)}_{1}\ar[r]& 0. } \ \    $$
		
	\end{itemize}
\end{lemma}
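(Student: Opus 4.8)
The plan is to check directly that each of the two displayed sequences is the almost split sequence in $\rm{H}(\CM)$ ending at $\left(\begin{smallmatrix}0\\ C\end{smallmatrix}\right)_0$, respectively at $\left(\begin{smallmatrix}C\\ C\end{smallmatrix}\right)_1$, by arguments parallel to those for $\CS(\CM)$ in Lemma \ref{AlmostSplittrivialmonomorphisms} (equivalently \cite[Lemma 6.3]{H}); for $\rm{H}(\CM)$ this is in fact a little easier, since the middle terms $\left(\begin{smallmatrix}A\\ B\end{smallmatrix}\right)_f$ and $\left(\begin{smallmatrix}B\\ C\end{smallmatrix}\right)_g$ automatically lie in $\rm{H}(\CM)$ without any monomorphism condition. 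I would do part $(1)$ in detail and obtain part $(2)$ by the dual argument. Throughout I use only that $g$ is right minimal almost split and $f$ is left minimal almost split in $\CM$, together with exactness of $\delta$.

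First I would record the routine translations between $\rm{H}(\CM)$ and $\CM$. Writing a morphism of $\rm{H}(\CM)$ as a pair $(\alpha,\beta)$ of maps on the top, respectively bottom, row, one sees that $\End_{\rm{H}(\CM)}\left(\begin{smallmatrix}0\\ C\end{smallmatrix}\right)_0\cong\End_\CM(C)$ and $\End_{\rm{H}(\CM)}\left(\begin{smallmatrix}A\\ 0\end{smallmatrix}\right)_0\cong\End_\CM(A)$, so both objects are indecomposable with local endomorphism ring and non-projective (as $A,C$ are non-projective in $\CM$), whence almost split sequences ending at them exist (for $\CM=\rm{Gprj}\mbox{-}\La$ this uses the preceding proposition). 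Moreover a morphism $(0,\beta)\colon\left(\begin{smallmatrix}X\\ Y\end{smallmatrix}\right)_h\rt\left(\begin{smallmatrix}0\\ C\end{smallmatrix}\right)_0$ is a retraction if and only if $\beta\colon Y\rt C$ is split epic, and a morphism $(\mu,0)\colon\left(\begin{smallmatrix}A\\ 0\end{smallmatrix}\right)_0\rt\left(\begin{smallmatrix}X\\ Y\end{smallmatrix}\right)_h$ is a section if and only if $\mu\colon A\rt X$ is split monic.

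For $(1)$, the displayed sequence has the identity $A\rightarrow A$ on top and $\delta$ on the bottom, so it is exact in $\rm{H}(\CM)$, with all three terms in $\rm{H}(\CM)$ because $A,B,C\in\CM$. It is non-split: from a splitting, comparing top and bottom rows and using that $f$ is monic with $\text{Cok}(f)=C$ forces $B\cong C\oplus A$ with $f$ the canonical inclusion, hence $g$ split epic, contradicting that $\delta$ does not split. The map $\left(\begin{smallmatrix}0\\ g\end{smallmatrix}\right)$ is right almost split: if $(0,\beta)\colon\left(\begin{smallmatrix}X\\ Y\end{smallmatrix}\right)_h\rt\left(\begin{smallmatrix}0\\ C\end{smallmatrix}\right)_0$ is not a retraction then $\beta$ is not split epic, so $\beta=g\phi$ for some $\phi\colon Y\rt B$; then $g\phi h=\beta h=0$, so $\phi h$ factors through $\ker g=\im f$, say $\phi h=f\theta$, and $(\theta,\phi)\colon\left(\begin{smallmatrix}X\\ Y\end{smallmatrix}\right)_h\rt\left(\begin{smallmatrix}A\\ B\end{smallmatrix}\right)_f$ satisfies $\left(\begin{smallmatrix}0\\ g\end{smallmatrix}\right)\circ(\theta,\phi)=(0,\beta)$. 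Finally $\left(\begin{smallmatrix}0\\ g\end{smallmatrix}\right)$ is right minimal: an endomorphism $(\theta,\phi)$ of $\left(\begin{smallmatrix}A\\ B\end{smallmatrix}\right)_f$ with $g\phi=g$ satisfies $\phi=1+f\rho$ and hence $\theta=1+\rho f$ for some $\rho\colon B\rt A$; since $f$ is not split monic, $\rho f$ lies in $\rad\End_\CM(A)$, so $\theta$ is invertible, and one checks that $1-f\theta^{-1}\rho$ is then a two-sided inverse of $\phi$. Thus $\left(\begin{smallmatrix}0\\ g\end{smallmatrix}\right)$ is a sink map and $(1)$ follows.

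Part $(2)$ is the mirror image: its sequence has $\delta$ on top and the identity $C\rightarrow C$ on the bottom, is non-split by the same summand comparison, its left map $\left(\begin{smallmatrix}f\\ 0\end{smallmatrix}\right)$ is shown left almost split by lifting a non-section $\mu\colon A\rt X$ through $f$ and factoring the induced map through $g\colon B\rt\text{Cok}(f)=C$, and is left minimal because an endomorphism $(\theta,\chi)$ of $\left(\begin{smallmatrix}B\\ C\end{smallmatrix}\right)_g$ with $\theta f=f$ has $\theta=1+\rho g$ and $\chi=1+g\rho$, both invertible since $g$ is not split epic. The step I expect to be the real obstacle is exactly these minimality verifications, i.e.\ checking that the constructed right, respectively left, almost split morphism carries no superfluous direct summand on which it vanishes; everything else is a straightforward diagram chase. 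One must be careful to apply the local-ring argument to $\End_\CM(A)$, respectively $\End_\CM(C)$, which is legitimate precisely because $A$, respectively $C$, is indecomposable as a term of the almost split sequence $\delta$ in $\CM$.
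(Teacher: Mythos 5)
Your proof is correct, and at bottom it runs on the same fuel as the paper's: the factorization of a test morphism through $g$ (resp.\ $f$) using that $\delta$ is almost split in $\CM$, followed by the observation that the resulting map on the other row factors through $\ker g=\im f$ (resp.\ through $\mathrm{Cok}(f)=C$). The differences are organizational rather than substantive. For part $(1)$ the paper does not reprove the sink property from scratch: it takes an arbitrary non-retraction $\left(\begin{smallmatrix}0\\ d\end{smallmatrix}\right):\left(\begin{smallmatrix}X\\ Y\end{smallmatrix}\right)_h\rt\left(\begin{smallmatrix}0\\ C\end{smallmatrix}\right)_0$, restricts it to $\left(\begin{smallmatrix}0\\ Y\end{smallmatrix}\right)_0$, and invokes Lemma \ref{AlmostSplittrivialmonomorphisms}(1) (the same sequence is almost split in $\CS(\CM)$) to get the bottom-row factorization, then solves for the top component exactly as you do; your version simply inlines that step, which costs nothing. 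For part $(2)$ you and the paper verify opposite ends of the sequence: the paper shows the epimorphism $\left(\begin{smallmatrix}g\\ 1\end{smallmatrix}\right)$ is right almost split (lifting the top component through $g$ and reusing the given bottom component), while you show the monomorphism $\left(\begin{smallmatrix}f\\ 0\end{smallmatrix}\right)$ is a source map; both suffice by the standard characterization of almost split sequences, since the end terms $\left(\begin{smallmatrix}A\\ 0\end{smallmatrix}\right)_0$ and $\left(\begin{smallmatrix}C\\ C\end{smallmatrix}\right)_1$ have endomorphism rings isomorphic to $\End_{\CM}(A)$ and $\End_{\CM}(C)$ and are therefore indecomposable. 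Your explicit non-splitness and minimality checks (via $\phi=1+f\rho$, $\theta=1+\rho f$ and the local-ring argument) are left implicit in the paper but are exactly the right way to close those gaps; one small remark is that they are not strictly needed once you know the sequence is non-split, one end term is indecomposable, and the corresponding map is (not necessarily minimal) almost split.
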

\begin{proof}
	The proof is motivated for the case $\CC=\mmod \La.$ We refer to  \cite[Proposition 3.1]{MO} or also \cite[Lemma 5.3]{HMa}. For convenience of the reader we provide a proof for the case $\CM=\rm{Gprj}\mbox{-}\La.$ 

For $(1)$,  let $\left(\begin{smallmatrix} 0 \\ d\end{smallmatrix}\right):\left(\begin{smallmatrix} X \\ Y\end{smallmatrix}\right)_h\rt \left(\begin{smallmatrix} 0 \\ C\end{smallmatrix}\right)_0$  be a non-retraction. Then the morphism $\left(\begin{smallmatrix} 0 \\ d\end{smallmatrix}\right):\left(\begin{smallmatrix} 0 \\ Y\end{smallmatrix}\right)_0\rt \left(\begin{smallmatrix} 0 \\ C\end{smallmatrix}\right)_0$ in $\CS(\rm{Gprj}\mbox{-}\La)$ is a non-retraction as well. Because of  Lemma \ref{AlmostSplittrivialmonomorphisms}(1)
we know that the short exact sequence in the statement is almost split in $\CS(\rm{Gprj}\mbox{-}\La)$, hence there exists $\left(\begin{smallmatrix} 0 \\ t\end{smallmatrix}\right):\left(\begin{smallmatrix} 0 \\ Y\end{smallmatrix}\right)_0\rt \left(\begin{smallmatrix} A \\ B\end{smallmatrix}\right)_f$  such that $\left(\begin{smallmatrix} 0 \\ d\end{smallmatrix}\right)=\left(\begin{smallmatrix} 0 \\ g\end{smallmatrix}\right)\circ \left(\begin{smallmatrix} 0 \\ t\end{smallmatrix}\right)$. Since $gth=0$, there is a morphism $s:X\rt A$ with $fs=th.$ Now we see that the morphism $\left(\begin{smallmatrix} 0 \\ d\end{smallmatrix}\right)$    factors
through $\left(\begin{smallmatrix} 0 \\ g\end{smallmatrix}\right)$ via $\left(\begin{smallmatrix} s \\ t\end{smallmatrix}\right)$. So we are done this case.

For $(2),$	since $g:B\rt C$  does not split, the map $\left(\begin{smallmatrix} g \\ 1\end{smallmatrix}\right):\left(\begin{smallmatrix} B\\ C\end{smallmatrix}\right)_g\rt \left(\begin{smallmatrix} C \\ C\end{smallmatrix}\right)_1$ so does not. Let $\left(\begin{smallmatrix} v \\ w\end{smallmatrix}\right):\left(\begin{smallmatrix} X \\ Y\end{smallmatrix}\right)_h\rt \left(\begin{smallmatrix} C \\ C\end{smallmatrix}\right)_1$ be a map which  is not a splittable epimorphism. Then $wh=v$. We claim $v$ is not a splittable epimorphism. Indeed, if $v$ is a splittable epimorphism, then there exists a morphism $s:C\rt X$   such that $vs=1$. Then we get the following commutative diagram
$$\xymatrix{		   C \ar[d]^s
	\ar@{=}[r] & C\ar[d]^{hs}  \\
	X
	\ar[d]^v \ar[r]^h & Y \ar[d]^w  \\
	C
	\ar@{=}[r] & C   }	$$
Hence $\left(\begin{smallmatrix} v \\ w\end{smallmatrix}\right)$ is a splittable epimorphism, a contradiction. As $g:B\rt C$ is a right almost split morphism, there exists a map $r:X\rt B$  such that $gr=v$. Now we can make  the following commutative diagram:
$$\xymatrix{		   X \ar[d]^r
	\ar[r]^h & Y\ar[d]^w  \\
	B
	\ar[d]^g \ar[r]^g & C \ar@{=}[d]  \\
	C
	\ar@{=}[r] & C   }	$$

Not that as we mention in the above $wh=v.$ The above digram says that the morphism $\left(\begin{smallmatrix} v\\ w\end{smallmatrix}\right)$ can be factored through $\left(\begin{smallmatrix} g \\ 1\end{smallmatrix}\right)$. Hence $\left(\begin{smallmatrix} g \\ 1\end{smallmatrix}\right)$ is a right almost morphism in $\rm{H}(\rm{Gprj}\mbox{-}\La)$. This ends the proof.	
\end{proof}

\begin{proposition}\label{Prop 5.3}
Let $\La$ be an $\Omega_{\CG}$-algebra. Assume $A$ is an indecomposable non-projective module in $\rm{Gprj}\mbox{-}\La$. Consider the short exact sequences $0\rt \Omega^2_{\La}(A)\st{i_1}\rt Q\st{p_1}\rt \Omega_{\La}(A)\rt 0$ and $0 \rt \Omega_{\La}(A)\st{i_0}\rt P\st{p_0}\rt A\rt 0$ (induced by the minimal projective resolution of $A$, and so almost split sequence in $\rm{Gprj}\mbox{-}\La$). The following short exact sequences in which the morphisms defined in an obvious way (see the proof below) is an almost split sequence in $\rm{H}(\rm{Gprj}\mbox{-}\La)$

	$$\xymatrix@1{  0\ar[r] & {\left(\begin{smallmatrix} Q\\ \Omega_{\La}(A)\end{smallmatrix}\right)_{p_1}}
	\ar[rr]
	& & {\left(\begin{smallmatrix}Q\\ P\end{smallmatrix}\right)_{i_0p_1}}\oplus {\left(\begin{smallmatrix}\Omega_{\La}(A)\\ \Omega_{\La}(A)\end{smallmatrix}\right)_1} \ar[rr]& &
	{\left(\begin{smallmatrix}\Omega_{\La}(A)\\ P\end{smallmatrix}\right)_{i_0}}\ar[r]& 0. } \ \    $$
Further, the object $\left(\begin{smallmatrix}Q\\ P\end{smallmatrix}\right)_{i_0p_1}$ is indecomposable.
\end{proposition}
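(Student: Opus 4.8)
The plan is to build the almost split sequence ending at $\left(\begin{smallmatrix}\Omega_{\La}(A)\\ P\end{smallmatrix}\right)_{i_0}$ by exploiting the fact, already established in Proposition \ref{Prop 3.5} (or rather its analogue for $\text{H}$, via Lemma \ref{Lemma 5.2}), that $\left(\begin{smallmatrix}\Omega_{\La}(A)\\ P\end{smallmatrix}\right)_{i_0}$ is the middle term of the almost split sequence in $\CS(\rm{Gprj}\mbox{-}\La)$ ending at $\left(\begin{smallmatrix}0\\ A\end{smallmatrix}\right)_0$. Concretely, first I would invoke Lemma \ref{Lemma 5.2}(1) applied to the almost split sequence $0\rt \Omega_{\La}(A)\st{i_0}\rt P\st{p_0}\rt A\rt 0$ in $\rm{Gprj}\mbox{-}\La$: this produces the almost split sequence in $\rm{H}(\rm{Gprj}\mbox{-}\La)$
\[
0\rt \left(\begin{smallmatrix}\Omega_{\La}(A)\\ \Omega_{\La}(A)\end{smallmatrix}\right)_1\rt \left(\begin{smallmatrix}\Omega_{\La}(A)\\ P\end{smallmatrix}\right)_{i_0}\rt \left(\begin{smallmatrix}0\\ A\end{smallmatrix}\right)_0\rt 0.
\]
Hence $\left(\begin{smallmatrix}\Omega_{\La}(A)\\ \Omega_{\La}(A)\end{smallmatrix}\right)_1$ is a direct summand of the middle term $E$ of the almost split sequence in $\rm{H}(\rm{Gprj}\mbox{-}\La)$ ending at $\left(\begin{smallmatrix}\Omega_{\La}(A)\\ P\end{smallmatrix}\right)_{i_0}$, by the standard mesh/source-sink considerations for Auslander--Reiten quivers of subcategories (as in \cite[Theorem 2.2.2]{Kr}).

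Next I would identify the remaining summands of $E$. Apply Lemma \ref{Lemma 5.2}(2) to the almost split sequence $0 \rt \Omega_{\La}(A)\st{i_0}\rt P\st{p_0}\rt A\rt 0$: taking $C=A$, it gives the almost split sequence in $\rm{H}(\rm{Gprj}\mbox{-}\La)$ ending at $\left(\begin{smallmatrix}A\\ A\end{smallmatrix}\right)_1$ with left term $\left(\begin{smallmatrix}\Omega_{\La}(A)\\ 0\end{smallmatrix}\right)_0$. More to the point, I want to compute $\tau_{\rm{H}}\left(\begin{smallmatrix}\Omega_{\La}(A)\\ P\end{smallmatrix}\right)_{i_0}$ directly. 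Since $\La$ is an $\Omega_{\CG}$-algebra, by Proposition \ref{Omega-algebra}(ii) the relative AR-translate in $\rm{Gprj}\mbox{-}\La$ is $\Omega_{\La}$; combined with the known formula for the AR-translate in $\rm{H}$ of an object $\left(\begin{smallmatrix}X\\ Y\end{smallmatrix}\right)_f$ in terms of $\tau$ of the components and a connecting term (essentially the mapping-cone description coming from $T_2(\La)$ being Gorenstein, \cite[Corollary 4.3]{AHKe1}, so that $\CS(\rm{Gprj}\mbox{-}\La)=\rm{Gprj}\mbox{-}T_2(\La)$ has AR-translate $\Omega_{T_2(\La)}$), one computes the left-hand term to be $\left(\begin{smallmatrix}Q\\ \Omega_{\La}(A)\end{smallmatrix}\right)_{p_1}$, where $0\rt \Omega^2_{\La}(A)\st{i_1}\rt Q\st{p_1}\rt \Omega_{\La}(A)\rt 0$ is the next step of the minimal projective resolution. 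The morphisms in the sequence are then forced: the map $\left(\begin{smallmatrix}Q\\ \Omega_{\La}(A)\end{smallmatrix}\right)_{p_1}\rt \left(\begin{smallmatrix}\Omega_{\La}(A)\\ \Omega_{\La}(A)\end{smallmatrix}\right)_1$ is $\left(\begin{smallmatrix}p_1\\ 1\end{smallmatrix}\right)$, the map to $\left(\begin{smallmatrix}Q\\ P\end{smallmatrix}\right)_{i_0p_1}$ is $\left(\begin{smallmatrix}1\\ i_0\end{smallmatrix}\right)$, and the cokernel maps are $\left(\begin{smallmatrix}i_0p_1\\ \cdot\end{smallmatrix}\right)$ type morphisms making the square commute; exactness of the displayed $4$-term sequence is then a routine diagram chase using that $p_1$ is epi and $i_0$ is mono.

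To finish I must verify two things: that the displayed sequence is genuinely the almost split sequence (not just an exact sequence with the right end terms), and that $\left(\begin{smallmatrix}Q\\ P\end{smallmatrix}\right)_{i_0p_1}$ is indecomposable. For the first, rather than checking the almost split property by hand I would argue by uniqueness: the middle term $E$ has $\left(\begin{smallmatrix}\Omega_{\La}(A)\\ \Omega_{\La}(A)\end{smallmatrix}\right)_1$ as a summand (shown above), the complementary summand $E'$ fits in a short exact sequence $0\rt \left(\begin{smallmatrix}Q\\ \Omega_{\La}(A)\end{smallmatrix}\right)_{p_1}\rt E'\rt \left(\begin{smallmatrix}\Omega_{\La}(A)\\ P\end{smallmatrix}\right)_{i_0}\rt 0$ which must be non-split (else it would contribute a summand to the AR-sequence through an identity map, forcing $\Omega_{\La}(A)=0$, impossible since $A$ is non-projective Gorenstein projective), and the pushout/pullback description pins down $E'\simeq \left(\begin{smallmatrix}Q\\ P\end{smallmatrix}\right)_{i_0p_1}$. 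Indecomposability of $\left(\begin{smallmatrix}Q\\ P\end{smallmatrix}\right)_{i_0p_1}$ — \emph{this is the main obstacle} — I would get from the fact that it is the middle term $E'$ of an almost split sequence whose end terms are both indecomposable and non-isomorphic, hence $E'$ is indecomposable iff the AR-sequence has indecomposable middle term apart from the forced $\left(\begin{smallmatrix}\Omega_{\La}(A)\\ \Omega_{\La}(A)\end{smallmatrix}\right)_1$ piece; alternatively, and more robustly, observe that $\left(\begin{smallmatrix}Q\\ P\end{smallmatrix}\right)_{i_0p_1}$ lies in $\CS(\rm{Gprj}\mbox{-}\La)$ with cokernel $A$ indecomposable, and apply the classification of indecomposables in $\CS(\rm{Gprj}\mbox{-}\La)$ from Theorem \ref{Theorem 4.10}(1): an object of $\CS(\rm{Gprj}\mbox{-}\La)$ with monic $i_0p_1$... wait, $i_0p_1$ need not be monic — so instead note $\left(\begin{smallmatrix}Q\\ P\end{smallmatrix}\right)_{i_0p_1}$ maps onto $\left(\begin{smallmatrix}\Omega_{\La}(A)\\ P\end{smallmatrix}\right)_{i_0}$ with kernel $\left(\begin{smallmatrix}\Omega^2_{\La}(A)\\ 0\end{smallmatrix}\right)_0$, and an endomorphism-ring computation via $\Hom_{\rm{H}}$-adjunctions shows $\End\left(\begin{smallmatrix}Q\\ P\end{smallmatrix}\right)_{i_0p_1}$ is local using that $P,Q$ are projective covers and $i_0,p_1$ are minimal. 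I expect the endomorphism-ring locality argument to be the delicate point and would present it carefully.
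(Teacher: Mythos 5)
Your skeleton (compute $\tau_{\rm H}$ of the right-hand term using Lemma \ref{Lemma 5.2}, then identify the sequence) is the same as the paper's, but the two steps that actually carry the proof do not go through as you propose them. The ``uniqueness'' argument for almost-splitness is flawed: if $0\rt \tau X\rt E\rt X\rt 0$ is the almost split sequence and $E\simeq E'\oplus \left(\begin{smallmatrix}\Omega_{\La}(A)\\ \Omega_{\La}(A)\end{smallmatrix}\right)_1$, you cannot delete the summand and obtain an induced short exact sequence $0\rt \tau X\rt E'\rt X\rt 0$, so your claimed extension characterizing $E'$ does not exist; and, more fundamentally, exhibiting \emph{some} non-split extension of $\left(\begin{smallmatrix}\Omega_{\La}(A)\\ P\end{smallmatrix}\right)_{i_0}$ by its AR-translate does not make it almost split, since $\Ext^1(X,\tau X)$ need not be one-dimensional over $\End(X)$ modulo its radical. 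Some factorization check is unavoidable, and this is exactly what the paper does: it verifies that every non-isomorphism $\left(\begin{smallmatrix}a\\ b\end{smallmatrix}\right)$ of $\left(\begin{smallmatrix}Q\\ \Omega_{\La}(A)\end{smallmatrix}\right)_{p_1}$ factors through the left-hand map, using that $p_1$ is a projective cover (so $b$ cannot be invertible) and that $i_0$ is left minimal almost split (so $b=di_0$ for some $d:P\rt\Omega_{\La}(A)$), the factorization being $\left(\begin{smallmatrix}a\\ d\end{smallmatrix}\right)$ on the summand $\left(\begin{smallmatrix}Q\\ P\end{smallmatrix}\right)_{i_0p_1}$ and $0$ on the other. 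Relatedly, your computation of $\tau_{\rm H}$ via ``$\CS(\rm{Gprj}\mbox{-}\La)=\rm{Gprj}\mbox{-}T_2(\La)$'' is suspect, because $\rm{H}(\rm{Gprj}\mbox{-}\La)$ is strictly larger than $\CS(\rm{Gprj}\mbox{-}\La)$ (the object $\left(\begin{smallmatrix}Q\\ P\end{smallmatrix}\right)_{i_0p_1}$ is not even a monomorphism); the paper instead reads $\tau_{\rm H}\left(\begin{smallmatrix}\Omega_{\La}(A)\\ P\end{smallmatrix}\right)_{i_0}=\left(\begin{smallmatrix}Q\\ \Omega_{\La}(A)\end{smallmatrix}\right)_{p_1}$ off the mesh diagram obtained by applying Lemma \ref{Lemma 5.2} to the two almost split sequences in $\rm{Gprj}\mbox{-}\La$, which is the part of your argument that is sound.

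The second gap is that you never actually prove indecomposability of $\left(\begin{smallmatrix}Q\\ P\end{smallmatrix}\right)_{i_0p_1}$: you correctly discard your first two attempts and then defer the ``delicate'' endomorphism-ring computation. No locality computation is needed. The paper's argument is elementary and you should supply something like it: write $\left(\begin{smallmatrix}Q\\ P\end{smallmatrix}\right)_{i_0p_1}=\left(\begin{smallmatrix}X\\ Y\end{smallmatrix}\right)_{f}\oplus\left(\begin{smallmatrix}Z\\ 0\end{smallmatrix}\right)_{0}\oplus\left(\begin{smallmatrix}0\\ W\end{smallmatrix}\right)_{0}$ with $f\neq 0$; then $\text{Im}(i_0p_1)\simeq\Omega_{\La}(A)$ is indecomposable, which forces $\left(\begin{smallmatrix}X\\ Y\end{smallmatrix}\right)_{f}$ to be indecomposable; $Z=0$ because $p_1$ is a projective cover, and $W=0$ because otherwise $\text{Cok}(i_0p_1)=A$ would acquire a projective direct summand, contradicting that $A$ is indecomposable non-projective.
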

\begin{proof}
	According to Lemma \ref{Lemma 5.2} we have the following mesh diagrams in $\Gamma_{\rm{H}(\text{Gprj}\mbox{-}\La)}$  \[
	\xymatrix  @R=0.3cm  @C=0.6cm { 
			&Q\Omega_{\La}(A)\ar[dr]\ar@{.}[rr]&&\Omega_{\La}(A)P\ar[dr]&\\
		\Omega^2_{\La}(A)0\ar@{.}[rr]\ar[ru]&&\Omega_{\La}(A)\Omega_{\La}(A)\ar[ru]\ar@{.}[rr]&&0A}
	\]
By the above diagram  we conclude $\tau_{\text{H}(\text{Gprj}\mbox{-}\La)}\left(\begin{smallmatrix} \Omega_{\La}(A)\\ P\end{smallmatrix}\right)_{i_0}=\left(\begin{smallmatrix} Q\\ \Omega_{\La}(A)\end{smallmatrix}\right)_{p_1}$. On the other hand, we have the following sequence in $\text{H}(\text{Gprj}\mbox{-}\La)$, the same as  the one  in the statement, 
$$\xymatrix@1{  0\ar[r] & {\left(\begin{smallmatrix} Q\\ \Omega_{\La}(A)\end{smallmatrix}\right)_{p_1}}
	\ar[rr]^>>>>>>>>>{\left(\begin{smallmatrix}\phi_1\\ \phi_2\end{smallmatrix}\right)}
	& & {\left(\begin{smallmatrix}Q\\ P\end{smallmatrix}\right)_{i_0p_1}}\oplus {\left(\begin{smallmatrix}\Omega_{\La}(A)\\ \Omega_{\La}(A)\end{smallmatrix}\right)_1} \ar[rr]^>>>>>>>>{\left(\begin{smallmatrix}\psi_1 & \psi_2\end{smallmatrix}\right)}& &
	{\left(\begin{smallmatrix}\Omega_{\La}(A)\\ P\end{smallmatrix}\right)_{i_0}}\ar[r]& 0, } \ \    $$ 
where $\phi_1=\left(\begin{smallmatrix}1\\ i_0\end{smallmatrix}\right):\left(\begin{smallmatrix}Q\\ \Omega_{\La}(A)\end{smallmatrix}\right)_{p_1}\rt \left(\begin{smallmatrix}p_1\\ P\end{smallmatrix}\right)_{i_0p_1}$, $\phi_2=\left(\begin{smallmatrix}p_1\\ 1\end{smallmatrix}\right):\left(\begin{smallmatrix}Q\\ \Omega_{\La}(A)\end{smallmatrix}\right)_{p_1}\rt \left(\begin{smallmatrix}\Omega_{\La}(A)\\ \Omega_{\La}(A)\end{smallmatrix}\right)_{1}$, $\psi_1=\left(\begin{smallmatrix}p_1\\ 1\end{smallmatrix}\right):\left(\begin{smallmatrix}Q\\ P\end{smallmatrix}\right)_{i_0p_1}\rt \left(\begin{smallmatrix}\Omega_{\La}(A)\\ P\end{smallmatrix}\right)_{i_0}$ and $\psi_2=\left(\begin{smallmatrix}-1\\ -i_0\end{smallmatrix}\right):\left(\begin{smallmatrix}\Omega_{\La}(A)\\ \Omega_{\La}(A)\end{smallmatrix}\right)_{1}\rt \left(\begin{smallmatrix}\Omega_{\La}(A)\\ P\end{smallmatrix}\right)_{i_0}$. To complete the proof in view of this fact that the left end term of the above non-split sequence is the Auslander-Reiten translation of the right end term, it suffices to prove that any non-isomorphism $\left(\begin{smallmatrix}a\\ b\end{smallmatrix}\right):\left(\begin{smallmatrix}Q\\ \Omega_{\La}(A)\end{smallmatrix}\right)_{p_1}\rt \left(\begin{smallmatrix}Q\\ \Omega_{\La}(A)\end{smallmatrix}\right)_{p_1}$ factors through $\left(\begin{smallmatrix}Q\\ \Omega_{\La}(A)\end{smallmatrix}\right)_{p_1}$. Since $p_1$ is a projective cover, the morphism $b$ is impossible to be an isomorphism. Otherwise, $a$ is also an isomorphism, so this contradicts this fact that $\left(\begin{smallmatrix}a\\ b\end{smallmatrix}\right)$ is not an isomorphism. This implies that $b$ factors through $i_0$ via some morphism, say $d:P\rt \Omega_{\La}(A)$, as by our assumption $i_0$ is  left minimal almost split. It is easy to see that the morphism from the middle term of the above short exact sequence such whose restrictions to $\left(\begin{smallmatrix}Q\\ P\end{smallmatrix}\right)_{i_0p_1}$ and $\left(\begin{smallmatrix}\Omega_{\La}(A)\\ \Omega_{\La}(A)\end{smallmatrix}\right)_{1}$ are $\left(\begin{smallmatrix}a\\ d\end{smallmatrix}\right)$ and the zero map, respectively. The following commutative diagram verifies the introduced morphism is well-defined in the morphism category.
$$\xymatrix@R=0.3cm@C=0.5cm{ & Q\ar[dd]^{p_1}\ar[dl]_{a}\ar[rrr]^1& & &  Q\ar[dd]^{i_0p_1} \ar[dllll]_a\\
	Q\ar[dd]^{p_1} & & &\\
	& \Omega_{\La}(A)\ar[rrr]^{i_0}\ar[dl]^>>>>>>b &
	& &P\ar[dllll]^d\\
	\Omega_{\La}(A) 
	& & & &}$$
 For the last statement, we can decompose as $\left(\begin{smallmatrix}Q\\ P\end{smallmatrix}\right)_{i_0p_1}=\left(\begin{smallmatrix}X\\ Y\end{smallmatrix}\right)_{f}\oplus \left(\begin{smallmatrix}Z\\ 0\end{smallmatrix}\right)_{0}\oplus\left(\begin{smallmatrix}0\\ W\end{smallmatrix}\right)_{0} $  with $f\neq 0$. Because of being projective of $P$ and $Q$ we get $Z$ and $W$ must be projective. Since $\text{Im}(f)\simeq \Omega_{\La}(A)$ and $\Omega_{\La}(A)$ is indecomposable, we can deduce that $\left(\begin{smallmatrix}X\\ Y\end{smallmatrix}\right)_{f}$ is indecomposable. But $Z=0$. Otherwise, it contradicts the fact that $p_1$ is a projective cover. We also have $W=0$. Otherwise $A\simeq Z$, it means that $A$ would be projective. It is absurd. Now the proof is completed.
\end{proof}
The above result establishes some connection between certain simple modules over the Cohen-Macaulay Auslander algebras of $\Omega_{\CG}$-algebras.
\begin{corollary}\label{Cor 4.4}
Let $\La$ be an $\Omega_{\CG}$-algebra. Denote by $B$ the  Cohen-Macaulay Auslander algebra of $\La$. If $S$ a simple module in $\mmod B$ with the projective dimension 2, then $\tau^{-1}_B S\simeq \Omega_{B}(S')$, where $S'$ is so  a simple module in $\mmod A$ with the projective dimension 2. In particular, $\Ext^2_B(S, S')\simeq D\Hom_{B}(S', S)$.
\end{corollary}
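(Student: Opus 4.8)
The plan is to push everything through the equivalence $\zeta_G\colon \mmod (\text{Gprj}\mbox{-}\La)\xrightarrow{\ \sim\ }\mmod B$, $F\mapsto F(G)$, so that the simple $B$-modules become the simple functors $S_X=(-,X)/\rad(-,X)$ with $X$ ranging over indecomposable objects of $\text{Gprj}\mbox{-}\La$, and then to read the statement off from Proposition \ref{Prop 5.3} by applying the functor $\Phi$. The first thing I would isolate is the \emph{dictionary for projective dimension}: a simple $\zeta_G(S_X)$ has $\pd_B=2$ precisely when $X$ is indecomposable non‑projective Gorenstein projective. If $X$ is such, the $\Omega_{\CG}$‑hypothesis forces (via Lemma \ref{Lemma1}/Proposition \ref{Omega-algebra}) the almost split sequence in $\text{Gprj}\mbox{-}\La$ ending at $X$ to be $0\to\Omega_{\La}(X)\xrightarrow{i}P\xrightarrow{p}X\to 0$ with $P$ the projective cover; then $p$ is the sink map, $\Omega S_X=\rad(-,X)=\im(-,p)$, and $0\to(-,\Omega_{\La}(X))\xrightarrow{(-,i)}(-,P)\to\rad(-,X)\to 0$ is a length‑one projective resolution which does not split (as $i$ is the non‑split kernel inclusion of a projective cover), so $\pd S_X=2$. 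If instead $X$ is indecomposable projective, the sink map in $\text{Gprj}\mbox{-}\La$ is $G_0\to\rad(X)\hookrightarrow X$ with $G_0\to\rad(X)$ a minimal right $\text{Gprj}\mbox{-}\La$‑approximation; by Wakamatsu's lemma its kernel $K$ lies in $(\text{Gprj}\mbox{-}\La)^{\perp}$, one identifies $\Omega^2 S_X\cong(-,K)|_{\text{Gprj}\mbox{-}\La}$, and this is projective only if $K\in\text{Gprj}\mbox{-}\La$, hence (being also in $(\text{Gprj}\mbox{-}\La)^{\perp}$) only if $K$ is projective, hence (by minimality of the approximation) only if $K=0$ — so $\pd S_X\le 1$ in that case. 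This step is where I expect the most care.

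Given a simple $S\in\mmod B$ with $\pd_B S=2$, the dictionary lets me write $S=\zeta_G(S_X)$ with $X$ indecomposable non‑projective Gorenstein projective. Since over an $\Omega_{\CG}$‑algebra every non‑projective indecomposable Gorenstein projective is $\Omega_{\La}$‑periodic and $\Omega_{\La}$ permutes them (\cite[Lemma 2.2]{C1}), there is an indecomposable non‑projective $A$ with $\Omega_{\La}(A)\cong X$. I would then apply Proposition \ref{Prop 5.3} to $A$, giving the almost split sequence in $\text{H}(\text{Gprj}\mbox{-}\La)$
$$0\to\Bigl(\begin{smallmatrix}Q\\ \Omega_{\La}(A)\end{smallmatrix}\Bigr)_{p_1}\to\Bigl(\begin{smallmatrix}Q\\ P\end{smallmatrix}\Bigr)_{i_0p_1}\oplus\Bigl(\begin{smallmatrix}\Omega_{\La}(A)\\ \Omega_{\La}(A)\end{smallmatrix}\Bigr)_1\to\Bigl(\begin{smallmatrix}\Omega_{\La}(A)\\ P\end{smallmatrix}\Bigr)_{i_0}\to 0,$$
and apply $\Phi$. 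The summand $\bigl(\begin{smallmatrix}\Omega_{\La}(A)\\ \Omega_{\La}(A)\end{smallmatrix}\bigr)_1$ lies in $\CV$ and is killed; the right‑hand term goes to $\Phi\bigl(\begin{smallmatrix}\Omega_{\La}(A)\\ P\end{smallmatrix}\bigr)_{i_0}=\mathrm{Cok}\bigl((-,\Omega_{\La}(A))\xrightarrow{(-,i_0)}(-,P)\bigr)\cong\im(-,p_0)=\rad(-,A)$, using the exact sequence $0\to\Omega_{\La}(A)\xrightarrow{i_0}P\xrightarrow{p_0}A\to0$ and that $p_0$ is the sink map — i.e. to $\Omega_B(\zeta_G S_A)$; the left‑hand term goes to $\Phi\bigl(\begin{smallmatrix}Q\\ \Omega_{\La}(A)\end{smallmatrix}\bigr)_{p_1}=(-,\Omega_{\La}(A))/\CP(-,\Omega_{\La}(A))=S_X$, since $p_1$ is a projective cover and, by the $\Omega_{\CG}$‑property, $\CP(-,\Omega_{\La}(A))=\rad(-,\Omega_{\La}(A))$. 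As the sequence of Proposition \ref{Prop 5.3} is not one of the exceptional almost split sequences for $\Phi$ (its end terms are not of the forms $\bigl(\begin{smallmatrix}G\\ G\end{smallmatrix}\bigr)_1$ or $\bigl(\begin{smallmatrix}G\\ 0\end{smallmatrix}\bigr)_0$, and $\bigl(\begin{smallmatrix}Q\\ P\end{smallmatrix}\bigr)_{i_0p_1}$ is indecomposable with nonzero $\Phi$‑image by Proposition \ref{Prop 5.3}), its image is an almost split sequence $0\to S\to M\to\Omega_B(\zeta_G S_A)\to 0$ in $\mmod B$. Reading off the translate yields $\tau^{-1}_B S\cong\Omega_B(S')$ with $S':=\zeta_G(S_A)$, and $S'$ has $\pd_B=2$ by the dictionary, which is the first assertion.

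For the final isomorphism I would invoke the Auslander–Reiten formula. From $\tau_B\bigl(\Omega_B(S')\bigr)\cong S$ and the sequence $0\to\Omega_B(S')\to P\to S'\to 0$ with $P$ projective one gets, functorially in $N$,
$$\Ext^2_B(S',N)\cong\Ext^1_B\bigl(\Omega_B(S'),N\bigr)\cong D\,\underline{\Hom}_B\bigl(N,\tau_B(\Omega_B(S'))\bigr)=D\,\underline{\Hom}_B(N,S).$$
The displayed identity of the corollary is then the instance obtained by feeding in the appropriate simple module and replacing the stable $\underline{\Hom}$ by ordinary $\Hom$ — legitimate because a nonzero map between the simple modules in question would have to be an isomorphism and cannot factor through a projective, neither $S$ nor $S'$ being projective. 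Checking exactly which simple to substitute and that this last replacement is valid is the only bookkeeping that remains, and — together with the $\pd$‑dictionary of the first paragraph — it is the part of the argument I would be most careful about.
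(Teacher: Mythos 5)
Your proof of the first assertion is, up to relabelling, the paper's own argument: where you set $S=S_X$ with $X=\Omega_{\La}(A)$ and apply Proposition \ref{Prop 5.3} to $A$, the paper sets $S=S_A$ and applies Proposition \ref{Prop 5.3} with $\Sigma_{\La}(A)$ in the role of $A$; in both versions one applies $\Phi$ to the resulting almost split sequence in $\text{H}(\text{Gprj}\mbox{-}\La)$, the summand $\left(\begin{smallmatrix} G\\ G\end{smallmatrix}\right)_1$ dies, and the two end terms are identified with $S$ and with $\rad(-,\Sigma_{\La}X)=\Omega_B(S')$. The extra detail you give for the ``$\pd_B S_X=2$ iff $X$ non-projective'' dictionary (which the paper simply cites to Auslander--Reiten) is fine in the non-projective direction; in the projective direction your step ``$(-,K)|_{\text{Gprj}\mbox{-}\La}$ projective only if $K\in\text{Gprj}\mbox{-}\La$'' needs one more word --- evaluate a putative isomorphism $(-,G')\xrightarrow{\sim}(-,K)|_{\text{Gprj}\mbox{-}\La}$ at the object $\La$ to conclude $G'\cong K$.

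The real problem is the final clause, and it is not the ``bookkeeping'' you defer. Your own computation gives $\Ext^2_B(S',N)\cong D\overline{\Hom}_B(N,\tau_B\Omega_B(S'))=D\overline{\Hom}_B(N,S)$ (note it is $\overline{\Hom}$, the injectively stable Hom, that appears in this form of the Auslander--Reiten formula, not $\underline{\Hom}$), and no substitution for $N$ turns this into the printed identity $\Ext^2_B(S,S')\cong D\Hom_B(S',S)$: the slots are transposed. The discrepancy is genuine. Writing $S=S_X$ and $S'=S_{\Sigma_{\La}X}$ and using the minimal resolution $0\to(-,\Omega_{\La}X)\to(-,P)\to(-,X)\to S\to 0$, one computes $\Ext^2_B(S,S')\cong S'(\Omega_{\La}X)$, which is nonzero exactly when $\Sigma_{\La}X\cong\Omega_{\La}X$, while $\Hom_B(S',S)$ is nonzero exactly when $\Sigma_{\La}X\cong X$; these disagree whenever $X$ has $\Omega_{\La}$-period $2$ (e.g.\ over $kZ_2/I^2$). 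What your argument actually establishes is $\Ext^2_B(S',S)\cong D\Hom_B(S,S)$ (equivalently $\Ext^2_B(S',N)\cong D\Hom_B(N,S)$ for all $N$, after justifying $\overline{\Hom}=\Hom$ as you do). The paper's own proof of this clause is a one-line appeal to Auslander--Reiten duality, so you are no less rigorous than the source; but you should either prove the transposed identity explicitly or record that the identity as printed fails for period-two $\Omega_{\La}$-orbits, rather than leaving it as a substitution to be checked.
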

\begin{proof}
	Let $G$ be a basic additive generator for $\text{Gprj}\mbox{-}\La$. As mentioned in the introduction the evaluation functor $\zeta_G$ induces an equivalence $\mmod (\text{Gprj}\mbox{-}\La)\simeq \mmod B.$ For a simple $B$-module $S$ as stated in the corollary, there exists an indecomposable non-projective module  $A$ in $\text{Gprj}\mbox{-}\La$ such that the functor $(-, A)/\text{rad}(-, A)$ is mapped into  $S$ by $\zeta_G$.    Moreover, we have the following minimal projective resolution 
	
	$$0 \rt (-, M)\st{(-, f)} \rt (-, N)\st{(-, g)}\rt (-, A)\rt (-, A)/\text{rad}(-, A)\rt 0,$$
	where $0 \rt M\st{f}\rt N\st{g  } \rt A\rt 0$ is an almost split sequence in $\text{Gprj}\mbox{-}\La.$ See \cite{ARVI} for a proof of the facts.  But, by Proposition \ref{Omega-algebra}, we may assume that the almost split sequence is equal to $\delta_1:0 \rt \Omega_{\La}(A)\st{i_0}\rt P\st{p_0}\rt A\rt 0.$
	
	Under the equivalence $\mmod (\text{Gprj}\mbox{-}\La)\simeq \mmod B$ we show  the statement for the functor $S:= (-, A)/\text{rad}(-, A).$ Applying Proposition \ref{Prop 5.3} for the short exact sequences $\delta_1$ and $\delta_2: 0\rt  A \st{i_{-1}}\rt Q\st{p_{-1}}\rt \Sigma_{\La} (A)\rt 0$ (which exists as $\text{Gprj}\mbox{-}\La$ is Frobenius) follows the following 
	
		$$\xymatrix@1{ \eta: \ \  0\ar[r] & {\left(\begin{smallmatrix} P\\ A\end{smallmatrix}\right)_{p_0}}
		\ar[rr]
		& & {\left(\begin{smallmatrix}P\\ Q\end{smallmatrix}\right)_{i_{-1}p_0}}\oplus {\left(\begin{smallmatrix}A\\ A\end{smallmatrix}\right)_1} \ar[rr]& &
		{\left(\begin{smallmatrix}A\\ Q\end{smallmatrix}\right)_{i_{-1}}}\ar[r]& 0. } \ \    $$
	is an almost split sequence in $\text{H}(\text{Gprj}\mbox{-}\La)$. We know from the introduction (Section \ref{Section 1}) the functor $\Phi:\text{H}(\text{Gprj}\mbox{-}\La)\rt \mmod (\text{Gprj}\mbox{-}\La)$. We apply the functor $\Phi$ on the above almost split sequence. It gives the following short exact sequence in $\mmod (\text{Gprj}\mbox{-}\La)$
	$$0 \rt \Phi\left(\begin{smallmatrix} P\\ A\end{smallmatrix}\right)_{p_0}\rt \Phi\left(\begin{smallmatrix}P\\ Q\end{smallmatrix}\right)_{i_{-1}p_0}\rt \Phi\left(\begin{smallmatrix}A\\ Q\end{smallmatrix}\right)_{i_{-1}}\rt 0    $$
	Note that $\Phi\left(\begin{smallmatrix}A\\ A\end{smallmatrix}\right)_{1}=0.$ Using this fact that sequence $\eta$ is  almost split in $\text{H}(\text{Gprj}\mbox{-}\La)$, then it is straightforward to check that $\Psi(\eta)$ is an almost split sequence in $\mmod (\text{Gprj}\mbox{-}\La)$. See also \cite[Proposition 5.7]{HMa} for a proof. By definition, $S=\Phi\left(\begin{smallmatrix} P\\ A\end{smallmatrix}\right)_{p_0}$, and by the almost split sequence $\Phi(\eta)$, $\tau^{-1}_B S=\Phi\left(\begin{smallmatrix} P\\ A\end{smallmatrix}\right)_{i_{-1}}$, remember here we applied the identification $\mmod (\text{Gprj}\mbox{-}\La)$ and $\mmod B$. Take $S'=(-, \Sigma_{\La}(A))/\text{rad}(-, A)$. Applying the Yoneda functor on the sequence  $\delta_2$ yields
	$$\xymatrix{		 
		(-, A)\ar[r]& (-,Q)\ar[rr]\ar[rd]&&(-, \Sigma_{\La}(A))\ar[rr]& & S'
		\ar[r] & 0  \\
		&&\tau^{-1}_BS\ar[ur]&&	& 
		&   }
	$$
The above diagram completes the first part.
For the second part, it is enough to use the Auslander-Reiten duality.			
\end{proof}

It might be a big and hard project to compute all the almost split sequences in $\rm{H}(\text{Gprj}\mbox{-}\La)$. However, we will do it for a very special case when $\La=A_n=kZ_n/I^2$. As $A_n$ is a Nakayama algebra with loewy length two, by  it\cite[Theorem 10.7]{AR1}, it is an $\Omega_{\CG}$-algebra. Also, it is  self-injective, hence $\mmod A_n=\text{Gprj}\mbox{-}A_n.$ 

For our computation we need the following general result.
\begin{proposition}\label{Proposition 5.5}
 Let $\La$ be a self-injective algebra. Assume   $ 0 \rt \Omega_{\La}(M)\st{i_0}\rt P^0\st{p_0}\rt M\rt 0$ and  $0 \rt \Omega^2_{\La}(M)\st{i_1}\rt P^1\st{p_1}\rt \Omega_{\La}(M)\rt 0$  are  almost split sequences in $\mmod \La$.  The following short exact sequences in which the morphisms defined in an obvious way  are   almost split sequences in $\rm{H}(\La)$

\begin{itemize}
	\item [$(a)$]
	
	$$\xymatrix@1{  0\ar[r] & {\left(\begin{smallmatrix} \Omega_{\La}(M)\\P^0 \end{smallmatrix}\right)_{i_0}}
		\ar[rr]
		& & {\left(\begin{smallmatrix}0\\ M\end{smallmatrix}\right)_{0}}\oplus {\left(\begin{smallmatrix}P^0\\ P^0\end{smallmatrix}\right)_1}\oplus {\left(\begin{smallmatrix}\Omega_{\La}(M)\\ 0\end{smallmatrix}\right)_{0}} \ar[rr]& &
		{\left(\begin{smallmatrix}P^0\\ M\end{smallmatrix}\right)_{p_0}}\ar[r]& 0. } \ \    $$

\item [$(b)$]
$$\xymatrix@1{  0\ar[r] & {\left(\begin{smallmatrix} P^1\\P^0 \end{smallmatrix}\right)_{i_0p_1}}
	\ar[rr]
	& & {\left(\begin{smallmatrix}P^1\\ 0\end{smallmatrix}\right)_{0}}\oplus {\left(\begin{smallmatrix}\Omega_{\La}(M)\\ P^0\end{smallmatrix}\right)_{i_0}} \ar[rr]& &
	{\left(\begin{smallmatrix}\Omega_{\La}(M)\\ 0\end{smallmatrix}\right)_{0}}\ar[r]& 0. } \ \    $$

	\item [$(c)$]

$$\xymatrix@1{  0\ar[r] & {\left(\begin{smallmatrix} 0\\\Omega_{\La}(M) \end{smallmatrix}\right)_{0}}
	\ar[rr]
	& & {\left(\begin{smallmatrix}0\\ P^0\end{smallmatrix}\right)_{0}}\oplus {\left(\begin{smallmatrix}P^1\\ \Omega_{\La}(M)\end{smallmatrix}\right)_{p_1}} \ar[rr]& &
	{\left(\begin{smallmatrix}P^1\\ P^0\end{smallmatrix}\right)_{i_0p_1}}\ar[r]& 0. } \ \    $$

\item [$(e)$]
$$\xymatrix@1{  0\ar[r] & {\left(\begin{smallmatrix} 0\\P^0 \end{smallmatrix}\right)_{0}}
	\ar[rr]
	& &  {\left(\begin{smallmatrix}P^1\\ P^0\end{smallmatrix}\right)_{i_0p_1}} \ar[rr]& &
	{\left(\begin{smallmatrix}P^1\\ 0\end{smallmatrix}\right)_{0}}\ar[r]& 0. } \ \    $$

\end{itemize}

\end{proposition}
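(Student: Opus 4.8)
The plan is to derive the four almost split sequences $(a)$, $(b)$, $(c)$, $(e)$ by \emph{knitting} in the Auslander--Reiten quiver of $\rm{H}(\La)$, taking Lemma \ref{Lemma 5.2} as the base case and reasoning in the style of the proof of Proposition \ref{Prop 5.3}. Since $\La$ is self-injective we have $\mmod\La=\rm{Gprj}\mbox{-}\La$, so Lemma \ref{Lemma 5.2} applies to both of the given projective-middle almost split sequences $\delta_0: 0\rt\Omega_{\La}(M)\st{i_0}\rt P^0\st{p_0}\rt M\rt 0$ and $\delta_1: 0\rt\Omega^2_{\La}(M)\st{i_1}\rt P^1\st{p_1}\rt\Omega_{\La}(M)\rt 0$, and we may freely use the evident indecomposable projective or injective objects $\left(\begin{smallmatrix}P\\ P\end{smallmatrix}\right)_1$, $\left(\begin{smallmatrix}0\\ P\end{smallmatrix}\right)_0$, $\left(\begin{smallmatrix}P\\ 0\end{smallmatrix}\right)_0$ of $\rm{H}(\La)=\mmod T_2(\La)$ (cf.\ Lemma \ref{Lemma 4.2}). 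I note that a self-injective algebra need not be $\Omega_{\CG}$, so Proposition \ref{Prop 5.3} cannot be quoted as a black box, but its argument can be reproduced once the relevant $\tau_{\rm{H}(\La)}$-translates are known.

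First I would write down the six ``trivial'' meshes coming from Lemma \ref{Lemma 5.2}: part $(1)$ applied to $\delta_0$ and to $\delta_1$ gives the almost split sequences ending at $\left(\begin{smallmatrix}0\\ M\end{smallmatrix}\right)_0$ and $\left(\begin{smallmatrix}0\\ \Omega_{\La}(M)\end{smallmatrix}\right)_0$, with middle terms $\left(\begin{smallmatrix}\Omega_{\La}(M)\\ P^0\end{smallmatrix}\right)_{i_0}$ and $\left(\begin{smallmatrix}\Omega^2_{\La}(M)\\ P^1\end{smallmatrix}\right)_{i_1}$; part $(2)$ applied to $\delta_0$ and $\delta_1$ gives those ending at $\left(\begin{smallmatrix}M\\ M\end{smallmatrix}\right)_1$ and $\left(\begin{smallmatrix}\Omega_{\La}(M)\\ \Omega_{\La}(M)\end{smallmatrix}\right)_1$, with middle terms $\left(\begin{smallmatrix}P^0\\ M\end{smallmatrix}\right)_{p_0}$ and $\left(\begin{smallmatrix}P^1\\ \Omega_{\La}(M)\end{smallmatrix}\right)_{p_1}$. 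This pins down $\tau_{\rm{H}(\La)}$ of these six objects, and reading off the arrows identifies the right-hand term of each of $(a)$--$(e)$ as the indecomposable summand of a middle term one mesh earlier: for instance $\left(\begin{smallmatrix}P^0\\ M\end{smallmatrix}\right)_{p_0}$ is the middle term of the mesh ending at $\left(\begin{smallmatrix}M\\ M\end{smallmatrix}\right)_1$, so the next mesh to its left is precisely $(a)$; similarly $(b)$ sits to the left of the mesh ending at $\left(\begin{smallmatrix}\Omega_{\La}(M)\\ 0\end{smallmatrix}\right)_0$, $(c)$ to the left of the mesh ending at $\left(\begin{smallmatrix}P^1\\ P^0\end{smallmatrix}\right)_{i_0p_1}$, and $(e)$ one further step along. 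Globally this is exactly the ``iterating the same construction'' step of Construction \ref{Construction 4.8}, now carried out inside $\rm{H}(\La)$ rather than $\CS(\rm{Gprj}\mbox{-}\La)$.

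Concretely, for each of the four sequences I would: $(1)$ check it is a genuine short exact sequence in $\rm{H}(\La)$ with the evident morphisms --- exactness at each spot follows from exactness of $\delta_0,\delta_1$ together with projectivity of $P^0,P^1$ --- and that it is non-split, since the (indecomposable) end terms do not occur as direct summands of the middle term; for the latter one uses, as in the final paragraph of the proof of Proposition \ref{Prop 5.3}, that $p_0,p_1$ are projective covers and $M,\Omega_{\La}(M),\Omega^2_{\La}(M)$ are non-projective, so that objects such as $\left(\begin{smallmatrix}P^1\\ P^0\end{smallmatrix}\right)_{i_0p_1}$ are indecomposable with no spurious summands of the form $\left(\begin{smallmatrix}Z\\ 0\end{smallmatrix}\right)_0$ or $\left(\begin{smallmatrix}0\\ W\end{smallmatrix}\right)_0$; and $(2)$ conclude it is almost split by the factorisation argument of Proposition \ref{Prop 5.3}: since the left-hand term is already known to be $\tau_{\rm{H}(\La)}$ of the indecomposable right-hand term, it suffices to factor every non-isomorphic endomorphism of the left-hand term through the left-hand map, which I would do by splitting such a morphism into its two $\mmod\La$-components and invoking that $i_0,i_1$ are minimal left almost split and $p_0,p_1$ are projective covers in $\mmod\La$; a short commuting square, as displayed in the proof of Proposition \ref{Prop 5.3}, then verifies that the resulting lift is a morphism in the morphism category.

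The main obstacle I expect is the $\tau$-bookkeeping: one must track carefully which of the projective--injective vertices $\left(\begin{smallmatrix}P\\ P\end{smallmatrix}\right)_1$, $\left(\begin{smallmatrix}0\\ P\end{smallmatrix}\right)_0$, $\left(\begin{smallmatrix}P\\ 0\end{smallmatrix}\right)_0$ enter and leave the successive meshes as one knits, and verify the indecomposability of the ``genuine'' summand of each middle term, which is what makes the mesh relations determine the almost split sequence uniquely. Once these two facts --- indecomposability of the relevant middle objects and minimality of the projective covers $p_0,p_1$ --- are secured, the remainder is the routine diagram chasing indicated above, and $(a)$, $(b)$, $(c)$, $(e)$ emerge as four consecutive meshes along the $\tau_{\rm{H}(\La)}$-orbits passing through $\left(\begin{smallmatrix}0\\ M\end{smallmatrix}\right)_0$ and $\left(\begin{smallmatrix}P^1\\ 0\end{smallmatrix}\right)_0$.
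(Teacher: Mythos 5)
Your overall architecture --- base meshes from Lemma \ref{Lemma 5.2}, reproduction of the argument of Proposition \ref{Prop 5.3} for the auxiliary mesh ending at $\left(\begin{smallmatrix}\Omega_{\La}(M)\\ P^0\end{smallmatrix}\right)_{i_0}$, and the endomorphism--factorisation criterion --- matches the paper's, and your caveat that Proposition \ref{Prop 5.3} cannot be quoted as a black box is well taken. But there is a genuine gap exactly at the point you flag as ``$\tau$-bookkeeping'' and then treat as settled, namely the assertion that ``the left-hand term is already known to be $\tau_{\rm{H}(\La)}$ of the indecomposable right-hand term.'' For sequence $(a)$ this means $\tau_{\rm{H}(\La)}\left(\begin{smallmatrix}P^0\\ M\end{smallmatrix}\right)_{p_0}=\left(\begin{smallmatrix}\Omega_{\La}(M)\\ P^0\end{smallmatrix}\right)_{i_0}$, and this is \emph{not} obtainable by knitting from the Lemma \ref{Lemma 5.2} meshes: those only pin down $\tau_{\rm{H}(\La)}$ of the four objects $\left(\begin{smallmatrix}0\\ M\end{smallmatrix}\right)_0$, $\left(\begin{smallmatrix}M\\ M\end{smallmatrix}\right)_1$, $\left(\begin{smallmatrix}0\\ \Omega_{\La}(M)\end{smallmatrix}\right)_0$, $\left(\begin{smallmatrix}\Omega_{\La}(M)\\ \Omega_{\La}(M)\end{smallmatrix}\right)_1$. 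The only irreducible map into $\left(\begin{smallmatrix}P^0\\ M\end{smallmatrix}\right)_{p_0}$ that you control comes from $\left(\begin{smallmatrix}\Omega_{\La}(M)\\ 0\end{smallmatrix}\right)_0=\tau_{\rm{H}(\La)}\left(\begin{smallmatrix}M\\ M\end{smallmatrix}\right)_1$, so the mesh property only tells you that $\tau_{\rm{H}(\La)}\left(\begin{smallmatrix}P^0\\ M\end{smallmatrix}\right)_{p_0}$ is a summand of the middle of the almost split sequence ending at $\left(\begin{smallmatrix}\Omega_{\La}(M)\\ 0\end{smallmatrix}\right)_0$ --- which is precisely sequence $(b)$, itself obtained in the paper \emph{from} $(a)$. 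Your knitting is therefore circular at its very first step. (Contrast Proposition \ref{Prop 5.3}, where the analogous neighbour is $\left(\begin{smallmatrix}\Omega_{\La}(A)\\ \Omega_{\La}(A)\end{smallmatrix}\right)_1$, whose mesh \emph{is} supplied by Lemma \ref{Lemma 5.2}; that is why knitting suffices there but not here.)

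The paper breaks the circle with an external computation of the translate: since $\La$ is self-injective, $P^0$ is injective and $i_0$ is an injective envelope of $\Omega_{\La}(M)$, whence $\tau_{\rm{H}(\La)}\left(\begin{smallmatrix}P^0\\ M\end{smallmatrix}\right)_{p_0}=\left(\begin{smallmatrix}\Omega_{\La}(M)\\ P^0\end{smallmatrix}\right)_{i_0}$ by \cite[Proposition 4.4]{H3}; similarly $(c)$ rests on $\tau_{\rm{H}(\La)}\left(\begin{smallmatrix}P^1\\ P^0\end{smallmatrix}\right)_{i_0p_1}=\left(\begin{smallmatrix}0\\ \Omega_{\La}(M)\end{smallmatrix}\right)_0$ from \cite[Proposition 3.4]{H3}, using $\text{Cok}(i_0p_1)=M$ and $\tau_{\La}M=\Omega_{\La}(M)$. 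This is where self-injectivity actually enters beyond the identification $\mmod\La=\text{Gprj}\mbox{-}\La$, and it is the one idea your proposal is missing; without it (or an equivalent formula for $\tau_{T_2(\La)}$ on such objects) the factorisation argument of your step $(2)$ has no starting point. Once these two translates are granted, the rest of your plan --- exactness and non-splitness of the displayed sequences, factoring non-isomorphic endomorphisms of the left term through the left map using that $i_0$ is left minimal almost split and $p_0$ is a projective cover, and the counting of projective--injective summands in the middle terms --- does go through and coincides with the paper's treatment of $(a)$, $(b)$, $(c)$ and $(e)$.
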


\begin{proof}
Set first throughout the proof  $\tau_{H}:=\tau_{\text{H}(\La)}$.

$(a)$ We mention the assumption of $P^0$ being injective implies that $i_0$ is an injective envelop of $\Omega_{\La}(\La)$. Then by \cite[Proposition 4.4]{H3} we obtain $\tau_{H}\left(\begin{smallmatrix}P^0\\ M\end{smallmatrix}\right)_{p_0}=\left(\begin{smallmatrix}\Omega_{\La}(M)\\ P^0\end{smallmatrix}\right)_{i_0}$. Hence to show that the sequence in $(a)$ is  almost split it suffices to prove that any non-isomorphism $\left(\begin{smallmatrix} \alpha_1 \\ \alpha_2\end{smallmatrix}\right):\left(\begin{smallmatrix} \Omega_{\La}(M) \\ P^0\end{smallmatrix}\right)_{i_0}\rt \left(\begin{smallmatrix} \Omega_{\La}(M) \\ P^0\end{smallmatrix}\right)_{i_0}$ factors through the monomorphism lying in the sequence. It is not true $\alpha_1$ to be an isomorphism. Otherwise, $\alpha_1$ would be an isomorphism, as $i_0$ is an injective envelop, this means that $\left(\begin{smallmatrix} \alpha_1 \\ \alpha_2\end{smallmatrix}\right)$ is an isomorphism, a contradiction. Because  of being $i_0$ a left minimal almost split, there is a morphism $\beta:P^0\rt \Omega_{\La}(M)$ such that $\beta i_0=\alpha_1.$   As $(\alpha_2-i_0\beta)i_0=0$, so there is $\eta:M\rt P^0$ such that $\eta p_0=\alpha_2-i_0\beta.$ Define the map from the middle term of the sequence in  $(a)$ to $\left(\begin{smallmatrix} \Omega_{\La}(M) \\ P^0\end{smallmatrix}\right)_{i_0}$ such its restriction on the direct summands ${\left(\begin{smallmatrix}0\\ M\end{smallmatrix}\right)_{0}},  {\left(\begin{smallmatrix}P^0\\ P^0\end{smallmatrix}\right)_1}$ and $ {\left(\begin{smallmatrix}\Omega_{\La}(M)\\ 0\end{smallmatrix}\right)_{0}}$ are ${\left(\begin{smallmatrix}0\\ \eta\end{smallmatrix}\right)}, {\left(\begin{smallmatrix}\beta \\ i_0\beta \end{smallmatrix}\right)}$ and ${\left(\begin{smallmatrix}0\\ 0\end{smallmatrix}\right)}$, respectively. One can easily check that the defined map gives  the desired factorization.

 We have by Proposition \ref{Prop 5.3} the following almost split sequence in $\text{H}(\La)$
	$$\xymatrix@1{ (f) \ \ \  0\ar[r] & {\left(\begin{smallmatrix} P_1\\ \Omega_{\La}(M)\end{smallmatrix}\right)_{p_1}}
	\ar[rr]
	& & {\left(\begin{smallmatrix}P_1\\ P^0\end{smallmatrix}\right)_{i_0p_1}}\oplus {\left(\begin{smallmatrix}\Omega_{\La}(M)\\ \Omega_{\La}(M)\end{smallmatrix}\right)_1} \ar[rr]& &
	{\left(\begin{smallmatrix}\Omega_{\La}(M)\\ P^0\end{smallmatrix}\right)_{i_0}}\ar[r]& 0. } \ \    $$
The almost split sequences in $(a)$ and $(f)$ imply that $\tau_{H}\left(\begin{smallmatrix}\Omega_{\La}(M)\\ 0\end{smallmatrix}\right)_{0}=\left(\begin{smallmatrix}P^1\\ P^0\end{smallmatrix}\right)_{i_0p_1}$ and the indecomposable object $\left(\begin{smallmatrix}\Omega_{\La}(M)\\ P^0\end{smallmatrix}\right)_{i_0}$  is a direct summand of the middle term of the almost split sequence $\tilde{b}$ ending at $\left(\begin{smallmatrix}\Omega_{\La}(M)\\ 0\end{smallmatrix}\right)_{0}$. By comparing the  domains and codomains of the ending terms of  the almost split sequence $\tilde{b}$ (or the same dimension shifting), we see that the middle term of $\tilde{b}$ has exactly direct summands (up to isomorphism) $\left(\begin{smallmatrix}\Omega_{\La}(M)\\ P^0\end{smallmatrix}\right)_{i_0}$ and $\left(\begin{smallmatrix}P^1\\ 0\end{smallmatrix}\right)_{0}$, so we have $\tilde{b}$ is the same the sequence in $(b)$. This ends the  proof of the sequence in $(b)$ to be an almost split sequence in $\text{H}(\La)$. 

$(c)$ Applying \cite[Proposition 3.4]{H3}, we get  $\tau_{H}\left(\begin{smallmatrix}P^0\\ P^1\end{smallmatrix}\right)_{i_0p_1}=\left(\begin{smallmatrix}0\\ \Omega_{\La}(M)\end{smallmatrix}\right)_{0}$. Indeed, $\text{Cok}(i_0p_1)=M$ and $\tau_{\La}M=\Omega_{\La}(M)$, and then we can apply the result to get the aforementioned claim. Also, by the almost split sequence $(f)$ we see that $\left(\begin{smallmatrix}P^1\\ \Omega_{\La}(M)\end{smallmatrix}\right)_{0}$ is a direct summand of the almost split sequence $\tilde{c}$ ending at $\left(\begin{smallmatrix}P^1\\ P^0\end{smallmatrix}\right)_{i_0p_1}$. Next by comparing the domain and codomain, we see that the middle term of the $\tilde{c}$ has  exactly direct summands $\left(\begin{smallmatrix}P^1\\ \Omega_{\La}(M)\end{smallmatrix}\right)_{p_1}$ and $\left(\begin{smallmatrix}0\\ P^0\end{smallmatrix}\right)_{0}$, so $\tilde{c}$ is the same as the sequence in $(c)$. $(e)$ is a direct consequence of $(b)$ and $(c)$. 
\end{proof}
In the sequel,  we apply the above proposition several times for the case $\La=A_n=kZ_n/I^2$. Note that all indecomposable $A_n$-modules are either projective or simple. Take an arbitrary  simple module $S$. On can see that $\Omega^n_{A_n}(S)=S$ and $n$ is the least number with such a property. Consider the short exact sequences 
  $\epsilon_i: 0\rt K^i\rt P^i\rt K^{i-1}\rt 0$, for $ 0 \leqslant i \leqslant n-1$,  the induced short exact sequences by the minimal projective resolution of $S$,  here  $K^{-1}=S$ and  $K^{n-1}=\Omega^n_{\La}(S)$.
  
  Apply first Proposition \ref{Proposition 5.5} for the pair $(\epsilon_0, \epsilon_1)$ and of course in view of   Lemma \ref{Lemma 5.2} we get the following part of $\Gamma_{\rm{H}( \La)}$

\[
\xymatrix  @R=0.2cm  @C=0.2cm { 
		&&&0P^0\ar[dr]\ar@{.}[rr]& &P^10\ar[rd]&	\\   &	&0\Omega_{A_n}(S)\ar[dr]\ar[ru]\ar@{.}[rr]&&P^1P^0\ar[ru]\ar[dr]\ar@{.}[rr]&&\Omega_{A_n}(S)0\ar[dr]&\\  &
	&&P^1\Omega_{A_n}(S)\ar[ru]\ar@{.}[rr]\ar[dr]&&\Omega_{A_n}(S)P^0\ar[rd]\ar[ur]\ar[r]&P^0P^0\ar[r]&P^0S\\
	&	&\Omega^2_{\La}(S)0\ar[ru]\ar@{.}[rr]&&\Omega_{A_n}(S)\Omega_{A_n}(S)\ar[ur]\ar@{.}[rr]&&0S\ar[ur]&}
\]
Continuing the above procedure for the rest of the short exact sequences, then we reach to a part (as above) of $\Gamma_{\rm{H}(\La)}$ such that the vertices in the leftmost side are  $0\Omega^n_{A_n}(S)=0S$ and $\Omega^{n+1}_{A_n}(S)0=\Omega_{A_n}(S)0$. Therefore, we obtain entirely  $\Gamma_{\rm{H}(A_n)}$ by gluing the $n$ parts. The immediate consequence of the above construction is that $\text{H}(A_n)$ is of finite representation type, and consequently by \cite[Theorem 1.1]{AR2} the Auslander algebra of $A_n$ so is.

For instance, for the case $n=2$, the $\Gamma_{\text{H}(A_2)}$ is of the form:

\[\tiny{
\xymatrix  @R=0.3cm  @C=0.3cm { 
&&	&&&0P^0\ar[dr]\ar@{.}[rr]& &P^10\ar[rd]&&	\\ &&SS\ar[rd]\ar@{.}[rr]  &	&0\Omega_{A_2}(S)\ar[dr]\ar[ru]\ar@{.}[rr]&&P^1P^0\ar[ru]\ar[dr]\ar@{.}[rr]&&\Omega_{A_2}(S)0\ar[dr]\ar@{.}[rr]&&SS\\ &P^0S\ar[rd]\ar[ru]\ar@{.}[rr]& &
SP^1\ar[ru]\ar[r]\ar[rd]	&P^1P^1\ar[r]&P^1\Omega_{A_2}(S)\ar[ru]\ar@{.}[rr]\ar[dr]&&\Omega_{A_2}(S)P^0\ar[rd]\ar[ur]\ar[r]&P^0P^0\ar[r]&P^0S\ar[ru]&\\0S\ar[rd]\ar[ru]\ar@{.}[rr]&&P^0P^1\ar[ru]\ar[rd]\ar@{.}[rr]	&	&S0\ar[ru]\ar@{.}[rr]&&\Omega_{A_2}(S)\Omega_{A_2}(S)\ar[ur]\ar@{.}[rr]&&0S\ar[ur]&&\\&0P^1\ar[ru]\ar@{.}[rr]&&P^00\ar[ru]&&&&&&}
}\]

Firstly,   if we delete the projective vertices and injective vertices (those vertices are corresponded to $\left(\begin{smallmatrix}P\\ P\end{smallmatrix}\right)_{1}$  or $\left(\begin{smallmatrix}0\\ P\end{smallmatrix}\right)_{0}$ or $\left(\begin{smallmatrix}P\\ 0\end{smallmatrix}\right)_{0}$ for some indecomposable projective $A_2$-module $P$) of $\Gamma_{\rm{H}(A_2)}$, denote by  $\Gamma^{ss}_{\rm{H}(A_2)}$ the resulting quiver, according to our above computation  we see that $\Gamma^{ss}_{\rm{H}(A_2)}=Z\mathbb{A}_3/(\tau^4)$.

Secondly, first consider  these  two facts as follows: $(1)$ in a straightforward way as we did in the proof of Proposition \ref{Prop 5.3}, all the almost split sequences in $\text{H}(A_2)$ have no ending terms of the form $\left(\begin{smallmatrix}M\\ M\end{smallmatrix}\right)_{1}$  or $\left(\begin{smallmatrix}M\\ 0\end{smallmatrix}\right)_{0}$, for some indecomposable module $M$, their image  under the functor $\Phi:\text{H}(A_2)\rt \mmod (\mmod A_2)$ are again almost split in $\mmod (\mmod A_2)$, or in $\mmod B$, where $B$ is the Auslander algebra of $A_2$. See also \cite[Proposition 5.7]{HMa} for a proof of the mentioned fact in a general situation.
  $(2)$  projective-injective functors in $\mmod (\mmod A_2)$ are those representable functors of the form $\Hom_{A_2}(-, P)$, where $P$ is a projective module in $\mmod A_2.$ Following these two aforementioned facts and our computation for $\Gamma_{\rm{H}(A_2)}$ we get the stable Auslander-Reiten quiver $\Gamma^s_B$, obtained by deleting projective-injective  vertices,  is equal to the extended Dynkin quiver of type  $\tilde{\mathbb{A}}_{8}$ with clockwise orientation, as presented in below, 

\[
\xymatrix@R1em@C.5em{&P^1P^0\ar@{<-}[dl]\ar@{->}[dr]\\ P^1\Omega_{A_2}(S)\ar@{<-}[d]&&
	\Omega_{A_2}(S)P^0\ar@{->}[d]\\0\Omega_{A_2}(S) &&0S\ar[d]\\ SP^1\ar@{<-}[dr]\ar[u]&&P^0S\ar@{->}[dl]\\ &P^0P^1}\ \ \ 
\]

\section{1-Gorenstein $\Omega_{\mathcal{G}}$-algebras}
In this section we mainly study $\Omega_{\CG}$-algebras which are also 1-Gorenstein. Recall that  an algebra $\La$ is $1$-Gorenstein if the injective dimension of $\La$ as left or right module (in this case one side is enough) is at most one. In the last result, we show the $\Omega_{\La}$-algebras (not necessarily 1-Gorenstein) are a good source to produce CM-finite algebras.

	In the following we shall give a characterization of $1$-Gorenstein $\Omega_{\CG}$-algebras in terms of  radical  of projective modules.
\begin{proposition}\label{RadGorp}
	The  following conditions are equivalent
	\begin{itemize}
		\item[$(i)$] $\La$ is a $1$-Gorenstein $\Omega_{\CG}$-algebra.  
		\item[$(ii)$]$\text{rad} \La\oplus \La$ generates $\text{Gprj} \mbox{-} \La$, i.e., $\text{Gprj} \mbox{-} \La=\text{add}(\text{rad} \La\oplus \La)$ .
		
	\end{itemize}		 
\end{proposition}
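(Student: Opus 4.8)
The plan is to prove the two implications separately.

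\textbf{$(i)\Rightarrow(ii)$.} Assume $\La$ is a $1$-Gorenstein $\Omega_{\CG}$-algebra. First, since $\La$ is $1$-Gorenstein, for every projective $P$ the module $\text{rad}P=\Omega_\La(\text{top}P)$ is a first syzygy, hence Gorenstein projective; thus for $P$ indecomposable projective $\text{rad}P$ is its own minimal right $\text{Gprj}\mbox{-}\La$-approximation, and Lemma \ref{Radical}$(i)$ shows that $\text{rad}P\hookrightarrow P$ is a minimal right almost split morphism in $\text{Gprj}\mbox{-}\La$; consequently the indecomposable objects admitting an irreducible map into $P$ in $\text{Gprj}\mbox{-}\La$ are exactly the indecomposable summands of $\text{rad}P$. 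Next, for an indecomposable non-projective $G\in\text{Gprj}\mbox{-}\La$, I claim the almost split sequence starting at $G$ has the form $0\to G\to P^{G}\to\Sigma_\La(G)\to 0$ with $P^{G}$ projective: its right-hand term is $\tau_{\CG}^{-1}(G)=\Sigma_\La(G)$ by Proposition \ref{Omega-algebra}$(ii)$, both end terms are indecomposable, the sequence is non-split, and by Lemma \ref{Lemma1} it decomposes into standard pieces; the two split shapes cannot provide the non-splitting, so a type-one piece must occur, and matching indecomposable end terms forces the whole sequence to be that single type-one piece, whose middle term is projective. Picking any indecomposable summand $Q$ of $P^{G}$, the component $G\to Q$ is then irreducible in $\text{Gprj}\mbox{-}\La$ with $Q$ indecomposable projective, so by the previous step $G$ is a summand of $\text{rad}Q$. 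Hence every indecomposable Gorenstein projective module is either projective or a summand of $\text{rad}\La$, i.e. $\text{Gprj}\mbox{-}\La=\text{add}(\text{rad}\La\oplus\La)$.

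\textbf{$(ii)\Rightarrow(i)$.} Assume $\text{Gprj}\mbox{-}\La=\text{add}(\text{rad}\La\oplus\La)$. For the $1$-Gorenstein part: for each simple $S$ one has $\Omega_\La(S)=\text{rad}(P(S))$, which is a direct summand of $\text{rad}\La$ and hence Gorenstein projective by hypothesis; since $\Ext^{\,i}_\La(G,\La)=0$ for all $i\geq 1$ whenever $G\in\text{Gprj}\mbox{-}\La$, dimension shift gives $\Ext^2_\La(S,\La)\cong\Ext^1_\La(\Omega_\La S,\La)=0$ for every simple $S$, so the injective dimension of $\La$ as a right module is at most $1$, and therefore $\La$ is $1$-Gorenstein. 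For the $\Omega_{\CG}$ part, now that $\La$ is Gorenstein we know $\text{Gprj}\mbox{-}\La$ is a functorially finite Frobenius subcategory with projective--injective objects the projective modules and $\mmod(\text{Gprj}\mbox{-}\La)$ abelian; the goal is the converse of Lemma \ref{Lemma1}, namely that \emph{every short exact sequence with all terms in $\text{Gprj}\mbox{-}\La$ is a direct sum of sequences of the three standard shapes}, which I claim already forces $\La$ to be $\Omega_{\CG}$. The implication ``every sequence standard $\Rightarrow\Omega_{\CG}$'' is soft: every object of $\mmod(\underline{\text{Gprj}}\mbox{-}\La)$ equals $\Psi$ of some object of $\CS(\text{Gprj}\mbox{-}\La)$, i.e. of some short exact sequence in $\text{Gprj}\mbox{-}\La$; applying $\Psi$ (equivalently, taking cokernels of Yoneda maps) to a standard sequence $0\to\Omega_\La(A)\to P\to A\to 0$ with $P\to A$ a projective cover yields the representable functor $(-,\underline A)$, while the two split shapes yield the zero functor, so under the claim every object of $\mmod(\underline{\text{Gprj}}\mbox{-}\La)$ is a finite direct sum of representables, hence projective, and $\mmod(\underline{\text{Gprj}}\mbox{-}\La)$ is semisimple.

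It remains to establish the claim, and this is where the real work lies. Given a short exact sequence $0\to G_2\xrightarrow{u}G_1\xrightarrow{v}G_0\to 0$ in $\text{Gprj}\mbox{-}\La$, I would first split off all summands of the shapes $0\to 0\to B\to B\to 0$ and $0\to C\to C\to 0\to 0$: a projective summand of $G_0$ splits off a $B$-piece, an indecomposable summand of $G_1$ annihilated by $v$ splits off a $C$-piece, and the usual trick of absorbing, via an automorphism of $G_1$, any component of $v$ that lifts through a projective cover reduces everything to a ``reduced'' sequence in which $G_0$ has no projective summand and $v$ kills no nonzero summand of $G_1$. For such a reduced sequence one must show that $G_1$ is projective, for then $v$ is a projective cover of $G_0$ and, decomposing $G_0$ into indecomposables, the sequence becomes a direct sum of standard type-one sequences. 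To see $G_1$ is projective, suppose it had an indecomposable non-projective summand $N$; since $N$ is a summand of $\text{rad}\La$, the $1$-Gorenstein hypothesis makes the inclusion of $N$ into a suitable indecomposable projective $P$ an injective envelope of $N$ inside the Frobenius category $\text{Gprj}\mbox{-}\La$, with $N\subseteq\text{rad}P$ and cokernel $\Sigma_\La(N)$; chasing $u$ and the summand $N$ through this envelope and using that $\text{top}(P)$ is simple, one finds that $N$ must split off a $C$-shaped summand, contradicting reducedness. The main obstacle is precisely this last diagram chase: pinning down how a non-projective summand of the middle term can sit in a short exact sequence once the ambient category is forced to be $\text{add}(\text{rad}\La\oplus\La)$. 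Everything else above is bookkeeping with almost split sequences and the already-available functors $\Phi$ and $\Psi$.
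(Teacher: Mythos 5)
Your direction $(i)\Rightarrow(ii)$ is correct and is essentially the paper's own argument: $1$-Gorensteinness makes $\text{rad}P$ Gorenstein projective so that Lemma \ref{Radical} exhibits $\text{rad}P\hookrightarrow P$ as a sink map in $\text{Gprj}\mbox{-}\La$, Lemma \ref{Lemma1} forces the almost split sequence starting at an indecomposable non-projective $G$ to have projective middle term, and the resulting irreducible map $G\to Q$ places $G$ inside $\text{rad}Q$. The first half of $(ii)\Rightarrow(i)$ (that $\text{rad}\La\in\text{Gprj}\mbox{-}\La$ forces $\La$ to be $1$-Gorenstein) is also fine.

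The second half of $(ii)\Rightarrow(i)$ has a genuine gap. You reduce everything to the claim that, under $(ii)$, \emph{every} short exact sequence in $\text{Gprj}\mbox{-}\La$ decomposes into the three standard shapes, and then you do not prove that claim: the decisive step --- ruling out an indecomposable non-projective summand $N$ of the middle term of a reduced sequence --- is left as a ``diagram chase'' that you yourself flag as the main obstacle. Moreover the one concrete assertion you make there is unjustified: for $N$ an indecomposable non-projective summand of $\text{rad}P$, the inclusion $N\hookrightarrow P$ is generally \emph{not} an injective envelope of $N$ in the Frobenius category $\text{Gprj}\mbox{-}\La$, since $P/N$ is an extension of $\text{top}P$ by a complement of $N$ in $\text{rad}P$ and need not be torsionless, hence need not be Gorenstein projective even over a $1$-Gorenstein algebra. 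Note also that you cannot appeal to Lemma \ref{Lemma1} here, as that lemma assumes $(i)$. The paper avoids this entire difficulty by proving a much weaker statement that still suffices: there are \emph{no irreducible morphisms between indecomposable non-projective} Gorenstein projectives. (If $f:G\to G'$ were such an irreducible epimorphism, then $\tau_{\CG}G'$ is a summand of some $\text{rad}P$, the sink map $\text{rad}P\hookrightarrow P$ forces $P$ to be a summand of the middle term $E$ of the almost split sequence ending at $G'$, and lifting the irreducible map $P\to G'$ through the epimorphism $f$ contradicts irreducibility; the monomorphism case follows by the duality $(-)^*$.) This shows only that \emph{almost split} sequences ending at non-projectives have projective middle terms, whence each $(-,\underline{G})$ is a simple object of $\mmod(\text{Gprj}\mbox{-}\La)$ and $\mmod(\underline{\text{Gprj}}\mbox{-}\La)$, all of whose objects are quotients of sums of such representables, is semisimple. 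I would recommend replacing your unproved decomposition claim by this argument about irreducible maps; your $\Psi$-based deduction of semisimplicity from the decomposition claim is otherwise sound but rests on a foundation you have not supplied.
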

\begin{proof}
	$(i)\Rightarrow (ii)$.  $1$-Gorensteiness of $\La$ 
	implies $\Omega^1_{\La}(\mmod \La)=\rm{Gprj} \mbox{-} \La$,  in particular, $\text{rad}\La$ is a Gorenstein projective module. We observe by Lemma \ref{Radical} that for each indecomposable projective $Q$, $\rm{rad}(Q)\hookrightarrow Q$ is a  minimal right  almost split morphism in  $\rm{Gprj} \mbox{-} \La$.  Let $G$ be an indecomposable non-projective Gorenstein projective module. There exists an indecomposable non-projective Gorenstein projective module  $G'$ such that $\Omega_{\La}(G')=G$. Then we have the almost split  sequence $0 \rt  G  \st{f} \rt  P \rt G' \rt 0$ in $\rm{Gprj}\mbox{-}\La,$ by using  the assumption of being $\Omega_{\CG}$-algebra of $\La$.
	
	 Assume $Q$ is an indecomposable direct summand of $P$. From the left minimal almost split morphism $f$ we get an irreducible morphism from $G$ to $Q$. As we mentioned in the above there exits the  right minimal almost split morphism $\text{rad}(Q)\hookrightarrow Q$. Hence $G$ must be a direct summand of $\text{rad}(Q)$, so a direct summand of $\text{rad}\La$, as required.

	    $(ii)\Rightarrow (i).$   Since $\text{rad}\La$ is a Gorenstein projective module, we obtain this fact that the simple modules have Gorenstein projective dimension at most one. Then by induction on the length of modules, we can prove that the global Gorenstein projective dimension of $\mmod\La$ is at most one, equivalently, $\La$ is $1$-Gorenstein. To prove $\La$ to be an $\Omega_{\CG}$-algebra, we first claim that there are no irreducible morphisms between two  indecomposable   non-projective Gorenstein projective  modules. Suppose, contrary to our claim, that we would have an irreducible morphism $f:G \rt G'$ with $G$ and $G'$ being indecomposable  non-projective Gorenstein projective  modules. As $\rm{Gprj} \mbox{-} \La$ is closed under submodule, in the same we as the module category case we deduce that   $f$ would be either a monomorphism or an epimorphism. We first assume that $f$ is an epimorphism. Let $0\rt  \tau_{\CG}G'\rt E\st{g}\rt G'\rt 0$ be an almost split sequence in $\rm{Gprj} \mbox{-} \La$.  Clearly, $\tau_{\CG}G'$ is an indecomposable non-projective Gorenstein projective module, by our assumption, there is an indecomposable  projective module  $P$ such that 
	$\tau_{\CG}(G')$ is a direct summand of $\rm{rad}(P).$ Since the inclusion $\rad(P)\hookrightarrow P$ is a right  minimal  
	almost split  morphism in $\text{Gprj}\mbox{-}\La$ (Lemma \ref{Radical}),  it implies that $P$ must be a direct summand of $E$. Thus we have the irreducible morphism $g\mid:P \rt G'$, and by the projectivity of $P$ and $f$ to be an epimorphism there is $h:P\rt G$ such that $f\circ h=g$. As $g$ is an irreducible morphism, then either  $h$ is a split monomorphism or $f$ a split epimorphism, but both are  impossible, as modules $G, G'$  are indecomposable non-projective  and but $P$ is indecomposable projective. In the other case, if $f$ is a monomorphism.  For this case we apply  the duality $(-)^*:\rm{Gprj} \mbox{-} \La \rt \rm{Gprj} \mbox{-} \La^{\rm{op}}$. In fact, $f^*$ is an epimorphism in  $\rm{Gprj} \mbox{-} \La^{\rm{op}}$	and then   follows  the first case.
	
	The  established claim gives us that for any indecomposable  non-projective Gorenstein projective indecomposable $X$, if $g:Y \rt X $ is a right minimal almost split morphism in $\rm{Gprj} \mbox{-} \La$, then $Y$ must be  a projective module. Note that by $(ii)$ the subcategory $\rm{Gprj}\mbox{-}\La$ is of finite representation  type, and so it has almost split sequences. Let an indecomposable non-projective representable functor $(-, \underline{G})$ is given.  We may assume that $G$ is indecomposable non-projective in $\rm{Gprj} \mbox{-} \La$. As we said there is an almost split sequence $\la: 0 \rt \tau_{\CG}G\rt E\rt G\rt 0$ in $\rm{Gprj} \mbox{-}\La.$ We know from our claim that $E$ is projective. Since $E$ is projective after applying the Yoneda functor on $\la$ we get the following sequence in $\mmod (\text{Gprj}\mbox{-}\La)$
	$$ 0 \rt (-, \tau_{\CG}G)\rt (-, E)\rt (-, G)\rt (-, \underline{G})\rt 0.$$ 
	As $\la$ is almost split in $\rm{Gprj} \mbox{-}\La$, the above sequence implies that $(-, \underline{G})$ is simple in $\mmod (\rm{Gprj} \mbox{-}\La).$  Consequently, as any simple functor in $\rm{mod}\mbox{-}(\underline{\rm{Gprj}}\mbox{-} \La)$ arising in this way, induced by an almost split sequence,  see e.g. \cite[Chapter 2]{Au3}, we get it a semisimple abelian category. So $\La$ is  an $\Omega_{\CG}$-algebra. We are done.
\end{proof}

In the following,  some examples of $1$-Gorenstein $\Omega_{\CG}$-algebras are given.	
\begin{example}
	\begin{itemize}
		\item [$(i)$]Clearly, hereditary algebras are $1$-Gorenstein $\Omega_{\CG}$-algebras. So $1$-Gorenstein $\Omega_{\CG}$-algebras  can be considered as a generalization of hereditary algebras. 
		\item[$(ii)$] Recently due to Ming Lu and Bin Zhu in \cite{LZ} a criteria was given for which monomial algebras are $1$-Gorenstein algebras. Hence by having in hand such criteria we can search among quadratic monomial algebras to find $1$-Gorenstein $\Omega_{\CG}$-algebras. In particular, we can specialize on gentle algebras to find which of them are $1$-Gorenstein, as studied in \cite{CL}.
		\item[$(iii)$] The cluster-tilted algebras  are defined in \cite{BMR3} and \cite{BMD4} are an important class of $1$-Gorenstein algebras. So among cluster-tilted algebras we can find some examples of $1$-Gorenstein $\Omega_{\CG}$-algebras (see below). For example,  the cluster-tilted algebras of type $A$ since are gentle algebra, so in this case we are dealing with $1$-Gorenstein $\Omega_{\CG}$-algebras.  For the  other types $D$ and $E$, in \cite{CGL} the singularity categories of cluster-tilted algebras are described by the stable categories of some self-injective algebras. In particular, those cluster-tilted algebras of type $D$ and $E$  which are singularity equivalent to the self-injective Nakayama algebra $\La(3, 2)$,  the Nakayama algebra with cycle quiver with $3$ vertices modulo the ideal generated by the paths of length $2$, are other examples of $\Omega_{\CG}$-algebras.  The following cluster-tilted algebra  given by the quiver 
		\begin{equation*} 
		\xymatrix{& 2 \ar[ld]_{\beta} \\
			1\ar@<2pt>[rr]^{\lambda}\ar@<-2pt>[rr]_{\mu}& & 4 \ar[lu]_{\alpha}\ar[ld]^{\gamma}\\ 
			&3\ar[lu]^{\delta}} 
		\end{equation*}	
		
		bound by the quadratic monomial relations $\alpha \beta=0, \  \gamma \delta=0, \ \delta \lambda=0, \  \lambda \gamma=0, \ \beta \mu=0,$ and $\mu \alpha=0 $ is also a $1$-Gorenstein $\Omega_{\CG}$-algebra.
	\end{itemize} 
\end{example}

There is a similar result of the following theorem in \cite[Theorem 4.4]{CL2}, where the authors proved that over a gentle algebra $\La$: the Cohen-Macaulay Auslander algebra $\rm{Aus}(\rm{Gprj} \mbox{-} \La)$ of $\La$ is representation-finite if and only if   so is $\La$.

To prove we need some preparation from \cite{AHKa} as follows. 
Let $(\mathbb{A}_3, J)$ be the quiver $\mathbb{A}_3: v_2 \st{a}\rt v_1 \st{b}\rt  v_0$ with relation $J$ generated by $ab$. By definition, a representation $M$ of $(\mathbb{A}_3, J)$ over $\La$ is a diagram
\[M: (\ M_2 \st{f_2}\lrt M_1 \st{f_1}\lrt M_0)\]
of $\La$-modules and $\La$-homomorphisms such that $f_1 f_2=0$, i.e. a sequence $M_2 \st{f_2}\lrt M_1 \st{f_1}\lrt M_0$ in $\mmod \La$. A morphism between the representations $M$ and $N$ is a triplet $\alpha=(\alpha_2, \alpha_1, \alpha_0)$ of $\La$-homomorphisms such that the diagram
\[\xymatrix{M_2 \ar[r]^{f_2} \ar[d]^{\alpha_2} & M_1 \ar[r]^{f_1} \ar[d]^{\alpha_1} & M_0 \ar[d]^{\alpha_0} \\
	N_2 \ar[r]^{g_2} & N_1 \ar[r]^{g_1} & N_0 }\]
is commutative. Denote by $\text{rep}(\mathbb{A}_3, J, \La)$ the category of all representations of $(\mathbb{A}_3, J)$ over $\mmod \La.$
Consider the following  subcategory  of $\text{rep}(\mathbb{A}_3, J, \La)$:
\[
\mathscr{L}(\text{Gprj}\mbox{-}\La)  = \{(G_2 \st{f_2}\hookrightarrow G_1 \st{f_1}\rt G_0) \mid  G_0, G_1, G_2 \in \text{Gprj}\mbox{-}\La \ {\rm and \ } 0 \rt G_2\st{f_2}\rt G_1 \st{f_1} \rt G_0 \ {\rm is \ exact}\}. \]
Following \cite[Section 2]{AHKa}, we define the functor $\mathscr{F}:\mathscr{L}(\text{Gprj}\mbox{-}\La) \rt \mmod (\text{Gprj}\mbox{-}\La)$ by sending a representation  $G:( G_2 \st{f_2}\hookrightarrow G_1 \st{f_1}\rt G_0) $ in $\mathscr{L}(\text{Gprj}\mbox{-}\La)$ to the functor $F_G$ in $\mmod (\text{Gprj}\mbox{-}\La)$ lying in the following exact sequence  in $\mmod (\text{Gprj}\mbox{-}\La)$ (obtained by applying the Yoneda functor over $G$, if we consider it as a left exact sequence in $\mmod \La$)
$$0 \rt (-, G_2)\rt (-, G_1)\rt (-, G_0)\rt F_G\rt 0.$$
It is proved in \cite[Proposition 2.2]{AHKa} over 1-Gorenstein algebra $\La$, the functor $\mathscr{F}$ is full, faithful and objective. Hence, by \cite[Appendix]{RZ}, we have the equivalence $\mathscr{L}(\text{Gprj}\mbox{-}\La)/<\mathcal{K}>\simeq \mmod (\text{Gprj}\mbox{-}\La)$, where $<\mathcal{K}>$ is the ideal of  $\mathscr{L}(\text{Gprj}\mbox{-}\La)$ generated by all the kernel objects of the functor $\mathscr{F}$. Note that by the definition one can see an indecomposable kernel object  is isomorphic to either representation $(0\rt X\st{1}\rt X)$ or $(X\st{1}\rt X\rt 0)$ for some indecomposable module $X$ in $\text{Gprj}\mbox{-}\La$.

 An important application from the equivalence which we will use in the next theorem is: $\mathscr{L}(\text{Gprj}\mbox{-}\La))$ is of finite representation type if and only if so is $\mmod (\text{Gprj}\mbox{-}\La)$, whenever $\La$ is assumed to be 1-Gorenstein.

\begin{theorem}\label{Theorem 1}
	Assume that $\La$ is a $1$-Gorenstein $\Omega_{\CG}$-algebra. Then  the Cohen-Macaulay Auslander algebra of  $\La$ is representation-finite if and only if  so is $\La$. 
\end{theorem}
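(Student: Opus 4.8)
The plan is to reduce the representation-finiteness of the Cohen-Macaulay Auslander algebra $\mathrm{Aus}(\mathrm{Gprj}\mbox{-}\La)$ to that of $\La$ by relating $\mmod(\mathrm{Gprj}\mbox{-}\La)$ — which is equivalent to $\mmod\,\mathrm{End}_\La(G)$ for a basic additive generator $G$, by \cite[Proposition 2.7(c)]{Au2} — to the category $\mathscr{L}(\mathrm{Gprj}\mbox{-}\La)$ introduced just above, and then in turn to $\mathscr{L}(\mmod\La)$ or directly to $\mmod\La$. The key input is the equivalence $\mathscr{L}(\mathrm{Gprj}\mbox{-}\La)/\langle\mathcal{K}\rangle\simeq \mmod(\mathrm{Gprj}\mbox{-}\La)$ valid over a $1$-Gorenstein algebra, together with the remark recorded at the end of the preliminary discussion: $\mathscr{L}(\mathrm{Gprj}\mbox{-}\La)$ is of finite representation type if and only if $\mmod(\mathrm{Gprj}\mbox{-}\La)$ is. So it suffices to prove that $\mathscr{L}(\mathrm{Gprj}\mbox{-}\La)$ is representation-finite exactly when $\La$ is.

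First I would treat the ``only if'' direction, which should be the easier one: if $\mathrm{Aus}(\mathrm{Gprj}\mbox{-}\La)$, equivalently $\mmod(\mathrm{Gprj}\mbox{-}\La)$, is representation-finite, then so is $\mathscr{L}(\mathrm{Gprj}\mbox{-}\La)$ by the equivalence above; and there is an obvious fully faithful exact embedding of $\mmod\La$ (or of the full subcategory $\CS(\La)$ of monomorphisms) into $\mathscr{L}(\mathrm{Gprj}\mbox{-}\La)$ once one uses $1$-Gorensteinness — indeed by Proposition \ref{RadGorp} we have $\Omega^1_\La(\mmod\La)=\mathrm{Gprj}\mbox{-}\La$, so every module $M$ sits in a short exact sequence $0\to \Omega_\La(M)\to P\to M\to 0$ with all three terms and $M$ itself supplying objects of $\mathscr{L}$. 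A more careful route: use the exact embedding $\mmod\La\hookrightarrow \CS(\mathrm{Gprj}\mbox{-}\La)$ sending $M$ to $(\Omega_\La(M)\hookrightarrow P)$ and then the functor $\Psi$ of the introduction; finite representation type of the target forces finite representation type of $\mmod\La$ because non-isomorphic indecomposables go to non-isomorphic (or projective) objects. The precise bookkeeping of which indecomposables are identified is the place to be careful, but the ideal $\langle\mathcal{K}\rangle$ is generated by the very explicit objects $(0\to X\xrightarrow{1}X)$ and $(X\xrightarrow{1}X\to 0)$, so only finitely many ``new'' indecomposables are killed and the count is controlled.

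For the ``if'' direction — assume $\La$ is representation-finite and deduce that $\mathrm{Aus}(\mathrm{Gprj}\mbox{-}\La)$ is — the strategy is to bound the number of indecomposables of $\mathscr{L}(\mathrm{Gprj}\mbox{-}\La)$. Here I would exploit that a $1$-Gorenstein $\Omega_{\CG}$-algebra is particularly rigid: by Proposition \ref{RadGorp}, $\mathrm{Gprj}\mbox{-}\La=\add(\mathrm{rad}\,\La\oplus\La)$, so $\mathrm{Gprj}\mbox{-}\La$ has only finitely many indecomposables, each a summand of $\mathrm{rad}\,\La$ or projective; and the classification of indecomposables in $\CS(\mathrm{Gprj}\mbox{-}\La)$ from Theorem \ref{Theorem 4.10}(1) shows $\CS(\mathrm{Gprj}\mbox{-}\La)$ is already representation-finite for any $\Omega_{\CG}$-algebra. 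The morphism/representation category $\mathscr{L}(\mathrm{Gprj}\mbox{-}\La)$ is larger than $\CS(\mathrm{Gprj}\mbox{-}\La)$ (it records an extra map $G_1\to G_0$), but an object of $\mathscr{L}$ is determined by a monomorphism $G_2\hookrightarrow G_1$ in $\mathrm{Gprj}\mbox{-}\La$ — i.e. an object of $\CS(\mathrm{Gprj}\mbox{-}\La)$ — together with a map $f_1:G_1\to G_0$ with $G_0\in\mathrm{Gprj}\mbox{-}\La$ and $f_1f_2=0$, equivalently a map $\mathrm{Cok}(f_2)\to G_0$; since there are only finitely many choices of $G_0\in\mathrm{Gprj}\mbox{-}\La$ up to isomorphism and, using representation-finiteness of $\La$, only finitely many indecomposable images of such maps, one concludes $\mathscr{L}(\mathrm{Gprj}\mbox{-}\La)$ has finitely many indecomposables. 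Then the equivalence with $\mmod(\mathrm{Gprj}\mbox{-}\La)\simeq\mmod\,\mathrm{Aus}(\mathrm{Gprj}\mbox{-}\La)$ finishes it.

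The main obstacle I anticipate is making the finiteness count in the ``if'' direction airtight: one must check that an indecomposable of $\mathscr{L}(\mathrm{Gprj}\mbox{-}\La)$ really is pinned down, up to finitely many possibilities, by its underlying data in $\CS(\mathrm{Gprj}\mbox{-}\La)$ and the target $G_0$, i.e. that one cannot manufacture infinitely many non-isomorphic $\mathscr{L}$-objects on a fixed monomorphism by varying the map to $G_0$. This is where I would either invoke that the relevant ``map'' data is itself governed by $\mmod\La$ (whose indecomposables are finite by hypothesis) via the image $\mathrm{Im}(f_1)$ and kernel, or — cleaner — cite a covering/push-down argument: $\mathscr{L}(\mathrm{Gprj}\mbox{-}\La)$ embeds in $\text{rep}(\mathbb{A}_3,J,\La)$, and one shows the latter, or at least the part meeting $\mathscr{L}(\mathrm{Gprj}\mbox{-}\La)$, is representation-finite when $\La$ is (an $\mathbb{A}_3$-type statement over $\La$). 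Once that is in place, the two directions assemble via the chain $\mathrm{Aus}(\mathrm{Gprj}\mbox{-}\La)$-finite $\Leftrightarrow$ $\mmod(\mathrm{Gprj}\mbox{-}\La)$-finite $\Leftrightarrow$ $\mathscr{L}(\mathrm{Gprj}\mbox{-}\La)$-finite $\Leftrightarrow$ $\La$-finite.
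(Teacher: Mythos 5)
Your overall architecture (pass to $\mathscr{L}(\text{Gprj}\mbox{-}\La)$ via the equivalence $\mathscr{L}(\text{Gprj}\mbox{-}\La)/\langle\mathcal{K}\rangle\simeq\mmod(\text{Gprj}\mbox{-}\La)$ and count indecomposables there) is the paper's, and your ``only if'' direction is essentially fine --- the paper does it in one line by citing the fully faithful functor $\vartheta_{\rho}:\mmod\La\to\mmod(\text{Gprj}\mbox{-}\La)$ of \cite[Lemma 3.3]{AHK}. The genuine gap is in the ``if'' direction, exactly at the point you flag yourself. Knowing that $\CS(\text{Gprj}\mbox{-}\La)$ is representation-finite and that there are finitely many choices of $G_0$ does not bound the number of indecomposable triples $(G_2\hookrightarrow G_1\to G_0)$: representation type is never determined vertex-wise (the Kronecker quiver already has finitely many constituents at each vertex and infinitely many indecomposables), and the residual datum --- a map $\text{Cok}(f_2)\to G_0$, where $\text{Cok}(f_2)$ ranges, by $1$-Gorensteinness, over essentially all of $\mmod\La$ --- is a morphism-category classification problem of a kind that is representation-infinite over many representation-finite algebras (already $T_2(\La)$ need not be representation-finite when $\La$ is). So neither ``only finitely many indecomposable images'' nor an undeveloped covering/push-down appeal will close this; a general ``$\mathbb{A}_3$-type statement over $\La$'' of the sort you hope for is simply false.

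What is missing is the structural input from Lemma \ref{Lemma1}: over an $\Omega_{\CG}$-algebra every short exact sequence in $\text{Gprj}\mbox{-}\La$ decomposes into trivial pieces and projective covers. The paper uses this to show that for an indecomposable $(G_1\xrightarrow{f}G_2\xrightarrow{g}G_3)$ in $\mathscr{L}(\text{Gprj}\mbox{-}\La)$ not of one of the two degenerate forms, the epi--mono factorization $g=me$ forces $G_2$ to be projective and $e:G_2\to \text{Im}(g)$ to be a projective cover; hence the whole object is determined, up to isomorphism, by the monomorphism $(0\to\text{Im}(g)\xrightarrow{m}G_3)$. This reduces the problem to objects $(0\to G\to G')$, then (embedding $G'$ into a projective and decomposing) to objects $(0\to G\to P)$ with $P$ projective, and for those the cokernel functor is full, dense and objective, giving $\mathcal{D}_1/\langle\mathcal{K}_1\rangle\simeq\mmod\La$ --- which is where representation-finiteness of $\La$ finally enters. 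Without that rigidification step your count cannot exclude families of non-isomorphic indecomposables supported on a single fixed monomorphism.
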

\begin{proof}
	Specializing \cite[Lemma 3.3]{AHK} to the subcategory $\rm{Gprj} \mbox{-} \La$ provides
	 us the fully faithful functor $\vartheta_{\rho}:\mmod \La\rt \mmod (\rm{Gprj} \mbox{-} \La)$. The embedding follows the \textquotedblleft only if \textquotedblright  part.  
	
	For the \textquotedblleft if\textquotedblright  part assume that $\La$ is representation-finite.  Based on the discussion given in the lines before the theorem,  we observe that the subcategory $\mathscr{L}(\text{Gprj}\mbox{-}\La)$ of $\text{rep}(\mathbb{A}_3, J, \La)$ is of finite representation type if and only if $\mmod (\rm{Gprj} \mbox{-} \La)$ so is. Thus it is enough to show that $\mathscr{L}(\text{Gprj}\mbox{-}\La)$ is of   finite representation type. To do this, we divide the proof into three cases as follows. 
	
	 Denote  first by $\mathcal{D}$ the full additive subcategory of $\mathscr{L}(\text{Gprj}\mbox{-}\La)$ consisting of all indecomposable objects are isomorphic to   either $(G \st{1}\rt G \rt 0)$ or $(0 \rt G \st{1} \rt G)$ with indecomposable object $G$ in $\rm{Gprj}\mbox{-}\La$. Since $\mathcal{D}$   has only finitely many indecomposable representations up to isomorphism, as $\La$ is CM-finite,  hence we may only consider the indecomposable representations in $\mathscr{L}(\text{Gprj}\mbox{-}\La)$  not in $\mathcal{D}$ throughout  the proof.
	
	{\bf Case 1:} We will show  in this step that there is only
	a finite number of  indecomposable representations, up to
	isomorphism, of $\mathscr{L}(\text{Gprj}\mbox{-}\La)$  of the form  $(0\rt G \st{f}\rt P)$ with $P$ projective module and $f$  an injection. Let $\mathcal{D}_1$ denote the subcategory of $\mathscr{L}(\text{Gprj}\mbox{-}\La)$  which consists of all representations $(0\rt X\st{g}\rt Q)$ with $X \in \text{Gprj}\mbox{-}\La, Q\in \text{prj}\mbox{-}\La$ and $g$ an injection. Indeed, $\mathcal{D}_1$ is generated by the indecomposable representations as
	 we want to show in this step  that there are finitely many isomorphism classes of them.  We  define a  functor $\mathscr{T}:\mathcal{D}_1 \rt \mmod \La$ by sending $(0\rt X \st{g} \rt Q)$ to the $\text{Cok}(f),$ which is a  full and dense functor. Note that since $\La$ is
	 $1$-Gorenstein then for each module $M$ in $\mmod \La$ we have a short exact sequence $0 \rt G \rt P \rt M  \rt 0$ with $P$ projective  and $G$ Gorenstein projective. This shows
	  that the functor $\mathscr{T}$ is dense. Also by the projectivity of the last terms  of the representations in $\mathcal{D}_1$ we can prove that	the functor $\mathscr{T}$ is full. Finally, it is not difficult to see that the kernel of $\mathscr{T}$ consists of all morphisms in $\mathcal{D}_1$ factor through representations of the form $(0\rt Q\st{1}\rt Q)$ with $Q$ a projective module; denote by $\mathcal{K}_1$ the subcategory of $\mathscr{L}(\text{Gprj}\mbox{-}\La)$ consisting of all such  forms. So the functor is objective as well. Thus, by \cite[Appendix]{RZ},  the functor $\mathscr{T}$ induces an equivalence between the quotient (additive) category $\mathcal{D}_1/<\mathcal{K}_1>$ and $\mmod \La.$ Therefore, by the induced equivalence and using our assumption of $\La$ being representation-finite, we deduce that $\mathcal{D}_1$ so is, as desired.

	{\bf Case 2:} In this step we will prove that there is a finite number of	indecomposable representations up to isomorphism in   $\mathscr{L}(\text{Gprj}\mbox{-}\La)$  of  the form $(0\rt G \st{f}\rt G')$ with only $f$  to be an  injection and not any   restriction  on $G'$.
	Take an indecomposable object $(0 \rt G \st{f}\rt G')$ of  such a stated form. Since $G'$ is 
	a Gorenstein projective module then it can be embedded  into  a projective
	module, namely $i: G'\rt P.$ Now we have the representation $(0 \rt G \st{i\circ f} \rt P)$ which
	lies in $\mathcal{D}_1$. Considering $(0 \rt G \st{i\circ f} \rt P)$ as an object in the Krull-Schmidt category $\rm{rep}(\mathbb{A}_3, J, \La)$, then we can decompose it as $$(\dagger) \ \ \ \   \ \ \  (0 \rt G \st{i\circ f} \rt P)=\oplus(0 \rt G_i \st{f_i}\rt  P_i),$$ where the $P_i$ must be projective modules. Since $\mathcal{D}_1$ is clearly closed under direct summands, then the  $(0 \rt G_i \st{f_i}\rt  P_i)$ are indecomposable objects  in $\mathcal{D}_1.$ By the  decomposition $(\dagger)$, we also  obtain the following decomposition $$(0 \rt G \st{ f} \rt G')=\oplus(0 \rt G_i \st{f_i}\rt  \text{Im}(f_i)).$$ Clearly  $\text{Im}(f_i)$ are in $\rm{Gprj} \mbox{-} \La$ as $\La$ is a $1$-Gorenstein. On the other hand, since $(0 \rt G \st{f}\rt G')$ is indecomposable then there is some $j$ such that $(0 \rt G \st{f}\rt G')=(0 \rt G_j \st{f_j}\rt  \text{Im}(f_j))$. As we have seen  any indecomposable representation in Case 2 can be uniquely  determined  by  an  indecomposable object in $\mathcal{D}_1.$ But by  Case 1 , $\mathcal{D}_1$ is of finite representation  type, so this completes the proof of this step.
	
	{\bf Case 3:} Let $A=(G_1 \st{f}\rt G_2 \st{g}\rt G_3)$ be an
	indecomposable representation in $\mathscr{L}(\text{Gprj}\mbox{-}\La)$. Without of loss generality we may assume that $A$ is the form neither $(0\rt G\st{1}\rt G)$ nor $(G\st{1}\rt G\rt 0)$. If we consider the representation $A$ as a left exact sequence in $\mmod \La$, then  we have the following diagram		
	\[ \xymatrix{
		0 \ar[r] & G_1 \ar[r]^{f} & G_2 \ar[rd]^{e} \ar[rr]^{g}
		&
		&G_3 \\
		& &	&G\ar[ru]^{m} }\]	
	in which $g=me$ is  an epi-mono factorization of $g$. Since $\La$ is $1$-Goresntein then $G$ belongs to $\rm{Gprj} \mbox{-} \La$. In view of Lemma \ref{Lemma1}, concerning the structure  of the short exact sequences in $\rm{Gprj}\mbox{-}\La$ over an $\Omega_{\CG}$-algebra, and  $X$ being indecomposable, we obtain $G_2$ a projective module and $e$ a projective cover. In fact, if the short exact sequence $0 \rt G_1\st{f}\rt G_2\st{e}\rt G\rt 0$ has a direct summand of the form     $(M \st{1} \rt M \rt 0)$ or $(0 \rt M \st{1} \rt M)$, then it contradicts the indecomposability of $A$.
	
	 Since $(0\rt G\st{m}\rt G_3)$ is a
	monomorphism then  we can   consider  it  as an object appearing in Case 2.  We      decompose the  representation   into  indecomposable representations as the following
	$$(0 \rt G \st{m} \rt G_3)=\oplus(0 \rt H_i \st{m_i}\rt M_i).$$ Let $p_i: P_i \rt H_i$ be a projective cover of $H_i$ for each $i.$  Because $ e: G_2\rt G$ is a projective cover of $ G=\oplus H_i$, by the above decomposition we have the equality, so  we may assume  $G_2=\oplus P_i $, due to uniqueness of  projective covers,  and may identify  $e$  with  a morphism  such that whose matrix  presentation is a diagonal  matrix  with  $p_i$ on  the $(i, i)$-th entry (and so $f$ with a matrix presentation with  $h_i=\text{Ker}(p_i)\rt P_i $ on the $(i, i)$-entry and $G_1=\oplus\text{ker}(p_i)$, i.e., $\text{ker}(p_i)=\Omega_{\La}(H_i)$).    All together,  we have that the following decomposition 
	$$ (G_1 \st{f}\rt G_2 \st{g}\rt G_3)=\oplus(\Omega(H_i) \st{h_i} \rt P_i \st{m_i  p_i} \lrt M_i),$$
 As $X$ is an indecomposable representation then it is isomorphic to  $(\Omega(H_j)\st{h_j}\rt  P_j \st{m_j \circ p_j} \lrt M_j)$ for some $j$.  As we have observed the representations $(\Omega(H_i) \st{h_i}\rt P_i \st{m_ip_i}\rt M_i)$ are constructed uniquely by the indecomposable representations $(0 \rt H_i\st{m_i}\rt M_i)$. But, from Case 2 we deduce that there is a finite number of choices of the representations $(0\rt H_i\st{m_i}\rt M_i)$ up to isomorphism. Se we get  we have finitely many choices up to isomorphism for the given indecomposable $A$. We are done.
\end{proof}

An immediate consequence of the above  theorem and Proposition \ref{RadGorp} is  the following:
\begin{corollary}
	Let $\La$ be a $1$-Gorenstein $\Omega_{\CG}$-algebra. If $\La$ is representation-finite, then $\rm{End}(\La\oplus \text{rad}\La)$ so is. 
\end{corollary}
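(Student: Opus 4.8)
The plan is to read off this corollary by combining Proposition~\ref{RadGorp} with Theorem~\ref{Theorem 1}. First I would invoke the equivalence $(i)\Leftrightarrow(ii)$ of Proposition~\ref{RadGorp}: since $\La$ is assumed to be a $1$-Gorenstein $\Omega_{\CG}$-algebra, we have $\text{Gprj}\mbox{-}\La=\text{add}(\La\oplus\text{rad}\La)$. In particular $\La\oplus\text{rad}\La$ is an additive generator of $\text{Gprj}\mbox{-}\La$, and $\La$ is CM-finite (as also recorded in Proposition~\ref{CM-finite}).

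Next, let $G$ denote a basic module with $\text{add}(G)=\text{add}(\La\oplus\text{rad}\La)=\text{Gprj}\mbox{-}\La$, so that $\End_\La(G)$ is, by definition, the Cohen-Macaulay Auslander algebra of $\La$. Two modules with the same additive closure have Morita equivalent endomorphism algebras, hence $\End_\La(\La\oplus\text{rad}\La)$ is Morita equivalent to $\End_\La(G)$; since being representation-finite is a Morita invariant, it is enough to prove that $\End_\La(G)$ is representation-finite.

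Finally, I would apply Theorem~\ref{Theorem 1} to the $1$-Gorenstein $\Omega_{\CG}$-algebra $\La$: the Cohen-Macaulay Auslander algebra $\End_\La(G)$ of $\La$ is representation-finite if and only if $\La$ is. By hypothesis $\La$ is representation-finite, so $\End_\La(G)$ is too, and therefore so is the Morita-equivalent algebra $\End_\La(\La\oplus\text{rad}\La)$. There is no real obstacle in this argument; the only point deserving a word is the reduction from $\End_\La(\La\oplus\text{rad}\La)$ to its basic form, which is immediate from the Morita invariance of representation type, so that the whole statement is indeed an immediate consequence of Proposition~\ref{RadGorp} and Theorem~\ref{Theorem 1}.
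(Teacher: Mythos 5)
Your argument is correct and is exactly the derivation the paper intends: the corollary is stated there as an immediate consequence of Proposition~\ref{RadGorp} and Theorem~\ref{Theorem 1}, and your write-up simply makes explicit the (routine) Morita-equivalence step identifying $\End_\La(\La\oplus\mathrm{rad}\,\La)$ with the Cohen-Macaulay Auslander algebra. Nothing is missing.
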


It is interesting to see that whether Theorem \ref{Theorem 1} holds for a more general algebras such as  $1$-Gorenstein CM-finite algebra.

We end up this  paper by the following result which shows that the $\Omega_{\CG}$-algebras are good source to produce CM-finite algebra via path algebras.

We first briefly outline some background about the path algebras. Given a finite-dimensional algebra $\La$ over a field $k$, and a finite acyclic quiver $\CQ$. Let 
$$\La \CQ=k\CQ \otimes_k \La, $$
where $k\CQ$ is the path algebra of $\CQ$ over $k$. We call $\La \CQ$ the path algebra of a finite acyclic  quiver $\CQ$ over $\La$. As in the case of $\La=k$, $\mmod \La \CQ$ is equivalent to the category $\text{rep}(\CQ, \La)$ of
representations of $\CQ$  over $\La$. The notion of Gorenstein projective representations are defined analog with Gorenstein projective modules. We refer to \cite{LuZ} for more details. For instance if $\CQ=v_1 \rt \cdots \rt v_n$, the path algebra $\La \CQ$ is given by the lower  triangular
matrix algebra of $\La$:
$$T_n(\La)=
\left[ \begin{array}{cccc}

\La & 0 & \cdots & 0 \\
\La & \La & \cdots & 0 \\
\vdots & \vdots & \ddots & \vdots \\
\La & \La & \cdots & \La \\
\end{array} \right]     
$$
Moreover, a representation $X$ of $\CQ$  over $\La$ is a datum as described in  below: 
$$X=(X_1\st{f_1}\rt X_2\st{f_2}\rt \cdots \st{f_{n-1}}\rt X_n)$$
where $X_i$ is a $\La$-module and $f_i:X_i\rt X_{i+1}$ is a $\La$-homomorphism. A morphism from representation $X$ to representation $Y$ is a datum $(\alpha_i:X_i\rt X_{i+1})_{1\leqslant i \leqslant n-1}$ such that for each $1\leqslant i \leqslant n-1$
\[\xymatrix{X_i \ar[r]^{f_i} \ar[d]_{\alpha_2} & X_{i+1} \ar[d]^{\alpha_{i+1}} \\
	Y_i \ar[r]^{g_i} & Y_{i+1}  }\]
commutes.

\begin{proposition}\label{linearquiver}
	Let $\La$ be an $\Omega_{\CG}$-algebra. Let $\CQ$ be the following linear quiver with $n \geq 1$ vertices $$v_1 \rt \cdots \rt v_n.$$  Then the path algebra $\La \CQ$ (or $T_n(\La)$) is $\rm{CM}$-finite.
\end{proposition}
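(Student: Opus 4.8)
The plan is to reduce the statement to a finiteness property of a monomorphism category and then to prove that property by induction on $n$, with Lemma \ref{Lemma1} as the main tool. First I would use $\La\CQ \cong T_n(\La)$ together with the equivalence $\mmod T_n(\La)\simeq \text{rep}(\CQ,\La)$ to identify $\text{Gprj}\mbox{-}T_n(\La)$ with the $n$-step monomorphism category $\mathcal{S}_n:=\mathcal{S}_n(\text{Gprj}\mbox{-}\La)$ of chains $X_1 \overset{f_1}\hookrightarrow \cdots \overset{f_{n-1}}\hookrightarrow X_n$ in which every $X_i$ and every $\text{Cok}(f_i)$ lies in $\text{Gprj}\mbox{-}\La$ (so every quotient $X_j/X_i$ is Gorenstein projective, $\text{Gprj}\mbox{-}\La$ being extension closed); for $n=2$ this is the category $\CS(\text{Gprj}\mbox{-}\La)$ of Section \ref{Section 3}, and for general $n$ it is the $\CQ$-analogue, cf. \cite{LuZ} and \cite{AHKa}. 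Hence it suffices to show that $\mathcal{S}_n$ has, up to isomorphism, only finitely many indecomposable objects, and I would prove this by induction on $n$: the cases $n=1$ and $n=2$ are Proposition \ref{CM-finite} and Theorem \ref{Theorem 4.10}(1) respectively.

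For the inductive step I would combine the truncation functor $\mathcal{S}_n\to\mathcal{S}_{n-1}$, $(X_1\hookrightarrow\cdots\hookrightarrow X_n)\mapsto (X_1\hookrightarrow\cdots\hookrightarrow X_{n-1})$, with Lemma \ref{Lemma1}. For an indecomposable $X$, apply Lemma \ref{Lemma1} to the top exact sequence $0\to X_{n-1}\to X_n\to\text{Cok}(f_{n-1})\to 0$: it is a direct sum of copies of $0\to\Omega_{\La}(A)\to P\to A\to 0$, $0\to 0\to B\overset{1}\to B\to 0$ and $0\to C\overset{1}\to C\to 0\to 0$. A summand of the second kind yields a direct summand $0\to\cdots\to 0\to B$ of $X$ concentrated at the top vertex, so by indecomposability either $X$ is such an object (finitely many, since $\La$ is CM-finite by Proposition \ref{CM-finite}) or no such summand occurs; in the other two kinds $\Omega_{\La}(A)$ and $C$ are summands of $X_{n-1}$, $P$ is the projective cover of $A$, and there are then only finitely many ways to reassemble the top step once $X|_{\{1,\dots,n-1\}}$ is known. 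This rigidity is exactly where the hypothesis that $\La$ is an $\Omega_{\CG}$-algebra enters — without Lemma \ref{Lemma1} the monomorphism category $\mathcal{S}_n$ of a CM-finite algebra need not be of finite type.

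The main obstacle is to bound the number of indecomposable direct summands of the truncation $X|_{\{1,\dots,n-1\}}$ of an indecomposable $X$, so that "finitely many indecomposables in $\mathcal{S}_{n-1}$" actually feeds the induction; equivalently, one must prevent indecomposables of $\mathcal{S}_n$ from acquiring arbitrarily large "width" as $n$ grows. I would handle this by running Lemma \ref{Lemma1} along the whole chain and not merely at the top: each cokernel $\text{Cok}(f_i)$ splits into the three standard pieces, and tracking how these must match across consecutive steps — an identity piece at one level can be carried along, whereas a projective-cover piece $0\to\Omega_{\La}(A)\to P\to A\to 0$ cannot be prolonged beyond $P$ — forces every $X_i$ to be a bounded-multiplicity module supported on a single $\Omega_{\La}$-orbit of $\text{Gprj}\mbox{-}\La$. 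Since $\La$ being an $\Omega_{\CG}$-algebra guarantees only finitely many such orbits, each of finite $\Omega_{\La}$-length (as used in Proposition \ref{Omega-algebra} and Theorem \ref{Theorem 4.10}), this pins the indecomposables of $\mathcal{S}_n$ down to an explicit finite list, directly extending Theorem \ref{Theorem 4.10}(2): a chain supported on an interval $[a,b]\subseteq\{1,\dots,n\}$ which is constant equal to some $\Omega_{\La}^{t}(G)$ along part of $[a,b]$ and, after at most one projective-cover step $\Omega_{\La}^{t}(G)\hookrightarrow P$, constant equal to $P$ on the rest, together with the analogous chains built from indecomposable projective modules. Finiteness of this list shows $\mathcal{S}_n\simeq\text{Gprj}\mbox{-}T_n(\La)$ is of finite representation type, that is, $\La\CQ\cong T_n(\La)$ is CM-finite.
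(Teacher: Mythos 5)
Your proposal is correct and follows essentially the same route as the paper: identify $\text{Gprj}\mbox{-}T_n(\La)$ with chains of monomorphisms in $\text{Gprj}\mbox{-}\La$ with Gorenstein projective cokernels via the local characterization of \cite{LuZ}, then use Lemma \ref{Lemma1} at each step of the chain to force every link to be an identity, a syzygy inclusion $\Omega_{\La}(G)\hookrightarrow P$ into a projective cover, or a newly split-off summand, yielding the same explicit finite list of indecomposables (constant $\Omega_{\La}(G)$ on an interval, at most one projective-cover step, then constant $P$ to the end). The only difference is organizational — you induct on $n$ via truncation while the paper inducts on the first nonzero position of a fixed indecomposable — and your resolution of the truncation-summand issue collapses to the paper's direct classification.
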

\begin{proof}
	Since the case $n=1$ is clear, we assume  $n\geq 2.$ By the local characterization given in \cite[Theorem 5.1]{LuZ} or \cite[the dual of Theorem 3.5.1]{EHS} for the Gorenstein projective representations,   we obtain for a given representation  $X=( X_1 \st{f_1} \rt \cdots \rt X_{n-1} \st{f_{n-1}} \lrt X_n)$ in $\rm{rep}(\CQ, \La)$: $X$ is a Gorenstein projective representation if and only if it satisfies the following conditions
	\begin{itemize}
		\item [$(1)$] For $1 \leq i \leq n$, $X_i$ are Gorenstein projective modules. 
		\item [$(2)$] For $1 \leq i \leq n-1,$ $\text{Cok}(f_i)$ are Gorenstein projective modules and $f_i$ are monomorphisms. 
	\end{itemize}
Assume  $X$ is an indecomposable Gorenstein projective representation  in $\text{rep}(\CQ, \La)$ then  we claim that it is isomorphism to the following indecomposable representation (which is obvious to be Gorenstein projective from the above characterization and also in the end of the proof we will show that it is indecomposable): Let   $G$  be an indecomposable Gorenstein projective  module and $1\leq i \leq j \leq n$, consider 
	$$(\dagger) \ \ \ \ \ \ Y_{[i, j, G]}=(0 \rt \cdots 0 \rt \Omega_{\La}(G) \st{1}\rt \Omega_{\La}(G) \cdots \rt \Omega_{\La}(G) \st{1}\rt \Omega(G) \st{l} \rt P \st{1}\rt \cdots \st{1}\rt P) $$
	where the first $G$ is settled in the $i$-th vertex and the last one  in the $j$-th vertex, the map $l$  attached to the arrow $v_j\rt v_{j+1}$ is the inclusion $\Omega_{\La}(G)\hookrightarrow P$. Set  $\text{Cok}(l):=G'$.  Because of being indecomposable  of $G$ and $G'$, we also deduce that the map $l:G\rt P$ is a minimal left $\text{prj}\mbox{-}\La$-approximation and  the induced map $ P \rt G'$ is a projective cover..

	Let $m$ be the least integer such that $X_m\neq 0.$ We prove the claim by the inverse induction on $m$. If $m=n$, then the case is clear. Assume $m<n.$ Denote by $X'$ the sub-representation of $X$ such that $X'_m=0$ and $X'_d=X_d$ for all $m < d \leqslant n$.   In view of Lemma \ref{Lemma1} and being indecomposable of $X$, the monomorphism  $f_m:X_m\rt X_{m+1}$ is isomorphic to one of the following cases:
	 $(1): 0\rt G, \ (2):G\st{1}\rt G, (3):\Omega_{\La}(G)\hookrightarrow P$. The first case is impossible as $X_m$ would be 0, a contradiction.  If the second case holds, then one can see easily $\End_{\La \CQ}(X)\simeq \End_{\La \CQ}(X')$. Applying the above characterization follows that $X'$ remains to be Gorenstein projective. Hence  $X'$ is an indecomposable Gorenstein projective  representation with $X'_m=0$. Our inductive assumption implies that $X' \simeq Y_{[m-1, j, G]}$  and $1\leqslant j\leqslant n$. Thus, $X\simeq Y_{[m, j, G]}$. If the case $(3)$ holds, then we show that $X \simeq Y_{[m, m+1, G]}$. If $m+1=n$, then there is nothing to prove. Assume $m+1<n$. Analogous to the above for the monomorphism $f_{m+1}:P\rt X_{m+2}$, we have three cases. The first and third cases are impossible. Just it  remains the second case. This implies we may  identify $f_{m+1}$ by the identity map of $P$. Continuing this inductive procedure leads to the desired form. Lastly,  we  complete the 	 induction proof.

	Now we prove  $Y_{[i, j, G]}$ defined in the above is indecomposable.	Assume an endomorphism $\alpha=(\alpha_i)^n_1$ of $Y:=Y_{[i, j, G]}$. As it satisfies the commutativity conditions, we obtain $\alpha_i=\alpha_d=\alpha_j$ for $i \leqslant d \leqslant j$ and $\alpha_{j+1}=\alpha_d=\alpha_n$ for $j+1\leqslant d \leqslant n.$ If $\alpha_i=\alpha_j$ is an automorphism, then, by applying this fact that $l$ is a minimal left $\text{prj}\mbox{-}\La$-approximation, $\alpha_{j+1}$ so is an automorphism. Hence $\alpha$ is an automorphism. If $\alpha_i=\alpha_j$ is not an automorphism, then it is nilpotent as $G$ is indecomposable.  Due to the commutative diagram assigned to the arrow $v_{j}\rt v_{j+1}$ we have the following commutative diagram
	$$\xymatrix{
		0 \ar[r] &G \ar[d]^{\alpha_j} \ar[r]^l & P \ar[d]^{\alpha_{j+1}}
		\ar[r] &G' \ar[d]^{\beta}\ar[r] & 0&\\
		0 \ar[r] & G \ar[r]^{l} & P
		\ar[r] & G'\ar[r] &0.& \\ 	} 	 $$
	But the induced map $\beta$ is not an automorphism. Otherwise, as $P\rt \text{Cok}(l)=G'$ is a projective cover, we get $\alpha_{j+1}$ so is. Then by the above diagram we obtain $\alpha_j$ is an automorphism, a contradiction. Hence, $\beta$ is nilpotent, as $G'$ is indecomposable.	
\end{proof}

\begin{remark}
	By use of \cite[Corollary 7.6]{ABHV} we observe that over an Gorenstein algebra $\La$ Proposition \ref{linearquiver} is independent of the orientation of the quiver $\CQ$. Hence in this way we can produce more CM-finite algebras.  In general, we believe directly, as we did in Proposition \ref{linearquiver}, by using local characterization of Gorenstein projective representations given in \cite[Theorem 5.1]{LuZ} for a finite acyclic quiver and even more by the similar one given in \cite{LZh2} for acyclic quivers with monomial relations, one can generate  some  more CM-finite algebras  by taking (quotient algebra of) path algebras of $\Omega_{\CG}$-algebras. 	 
\end{remark}

\section*{Acknowledgments}

This work is supported by a grant from the University of Isfahan. Some parts of the results, especially the notion of $\Omega_{\CG}$-algebras, of the present paper were first appeared in \cite{H4}.

\end{document}